\documentclass[11pt]{article}

\usepackage[margin=1.1in]{geometry}
\usepackage{amsmath,amssymb,amsfonts,amsthm,mathrsfs}
\usepackage{mathtools}
\usepackage{enumitem}
\usepackage{booktabs}
\usepackage{array}
\usepackage{hyperref}
\usepackage{multirow}
\usepackage{longtable}

\hypersetup{colorlinks=true,linkcolor=blue,citecolor=blue,urlcolor=blue}

\newtheorem{thm}{Theorem}[section]
\newtheorem{prop}{Proposition}[section]
\newtheorem{lem}{Lemma}[section]

\newtheorem{rmk}{Remark}[section]

\DeclareMathOperator{\Gr}{Gr}
\DeclareMathOperator{\IG}{IG}
\DeclareMathOperator{\Aut}{Aut}

\DeclareMathOperator{\Graph}{Graph}

\newcommand{\F}{\mathbb F}
\newcommand{\R}{\mathbb R}
\newcommand{\C}{\mathbb C}
\newcommand{\HH}{\mathbb H}
\newcommand{\cX}{\mathcal X}
\newcommand{\cT}{\mathcal T}

\newcommand{\olG}{\overline G}
\newcommand{\olGo}{\olG^{\circ}}
\newcommand{\eps}{\varepsilon}
\newcommand{\GL}{\mathrm{GL}}
\newcommand{\OO}{\mathrm{O}}
\newcommand{\UU}{\mathrm{U}}
\newcommand{\SpG}{\mathrm{Sp}}
\newcommand{\Ostar}{\mathrm{O}^{*}}

\newcolumntype{M}[1]{>{\raggedright\arraybackslash}m{#1}}
\newcolumntype{L}[1]{>{\raggedright\arraybackslash}p{#1}}
\newcolumntype{C}[1]{>{\centering\arraybackslash}p{#1}}

\title{Grassmannian Realizations of Classical Pseudo-Riemannian Symmetric Spaces}
\author{Yunxia Chen \& Naichung Conan Leung}
\date{}

\begin{document}
	\maketitle

\begin{abstract}
		We establish a unified fixed-point Grassmannian framework for the classical pseudo-Riemannian symmetric spaces from Berger’s classification.
		For a classical group $G=\Aut(V,b)$, we identify $G$ with the open graph-like locus inside an ordinary or maximal-isotropic Grassmannian $\overline{G}$.
		Natural involutions of $\overline{G}$ are constructed via classical forms, compatible scalar structures, and subspace configurations.
		We explicitly describe the full fixed loci of these involutions in terms of classical Grassmannians.
		The corresponding graph-like fixed loci (or their signature strata) are identified, respectively, with the moduli spaces of classical forms, compatible scalar structures, and subspace configurations; these moduli spaces coincide with classical pseudo-Riemannian symmetric spaces.
		In summary, the $54$ classical series are organized into four families: $10$ group-type models, $7$ classical-form Grassmannians, $20$ scalar-structure Grassmannians, and $17$ subspace-configuration Grassmannians.
\end{abstract}

\section{Introduction}
Pseudo-Riemannian symmetric spaces generalize Riemannian symmetric spaces. Every such semisimple space can be represented as a homogeneous space $G/H$ determined by an involutive automorphism $\rho$ of a Lie group $G$, where $H$ is an open subgroup of the fixed-point subgroup $G^\rho$ \cite{Huang, Loos, R}. Berger established the systematic classification of semisimple pseudo-Riemannian symmetric spaces \cite{B}. Under Neretin’s convention, this classification includes 54 classical series and 131 exceptional series, up to finite coverings and connected components \cite{N}.

The present paper concerns the $54$ classical series. A homogeneous space $G/H$ is called classical here when $G$ is a classical Lie group. According to the standard classification \cite{Har,K}, every such classical group can be realized as $G=\Aut(V,b)$, where $V$ is a linear space over $\F \in \{\R, \C, \HH \}$, i.e. $V \cong \F^n$, and  $b=0$ or $b$ is a nondegenerate classical form on $V$
(a symmetric or skew-symmetric form over $\R$ or $\C$, or a Hermitian or skew-Hermitian form over $\C$ or $\HH$).
The explicit correspondence between underlying fields, classical forms, and the associated classical  groups is summarized in the following table\footnote{See Section 1.1 in \cite{Har} for these classical forms and their symmetry groups. Over $\mathbb{C}$, skew-Hermitian forms have associated groups $\UU(p,q)$, same as Hermitian forms. When compatibility enforces a split complex skew-Hermitian form in later constructions, we denote its group by $\UU(m,m)$.}:
\vspace{-8pt}
\begin{table}[htbp]
	\centering
	\caption{Classical groups associated with classical forms}
	\label{tab:gp}
	\begin{tabular}{|c|c|c|c|c|c|}
		\hline
		$\mathbb{F}$ $\backslash$ $b$    & $b=0$ & \small{symmetric}  & \small{skew-symmetric}  & \small{Hermitian} & \small{skew-Hermitian}  \\  \hline
		$\mathbb{R}$  & $\GL(n, \mathbb{R})$ & $\OO(p,q)$ & $\SpG(2m,\mathbb{R})$ &  &  \\  \hline
		$\mathbb{C}$ & $\GL(n, \mathbb{C})$ &  $\OO(n,\mathbb{C})$ & $\SpG(2m,\mathbb{C})$ & $\UU(p,q)$ & $\UU(p,q)$ \\ \hline
		$\mathbb{H}$ & $\GL(n, \mathbb{H})$  &  &  & $\SpG(p,q)$ & $\Ostar(2n)$ \\ \hline
	\end{tabular}
\end{table}

A pseudo-Riemannian symmetric space $G/H$ is Riemannian if and only if the isotropy subgroup $H$ is compact. In \cite {HL}, the authors constructed a unified geometric realization of all compact Riemannian symmetric spaces in terms of Grassmannians. In the classical setting, there are three ordinary Grassmannian series and seven maximal-isotropic Grassmannian series;
the corresponding space-like Grassmannians realize the noncompact duals of these spaces \cite{CHL}.

This raises a natural question: can the classical pseudo-Riemannian symmetric spaces themselves be realized geometrically in Grassmannians? Neretin gave  algebraic realizations of all classical series in terms of complementary subspaces, together with open Grassmannian embeddings \cite{N}.
In this paper, we establish a unified fixed-point framework for all $54$ classical series: each symmetric space is realized as a distinguished graph-like locus, or signature stratum, inside the fixed-point locus of a natural involution of a Grassmannian compactification.

In \cite{He}, H.Y. He constructed Grassmannian compactifications of classical groups. For $G=\Aut(V)$, the compactification is $\olG=\Gr_{\mathbb{F}}(n,V\oplus V)$, while for $G=\Aut(V,b)$ with $b\neq0$, $\olG=\IG(V\oplus V,b\oplus(-b))$. In either case, the open graph-like locus $\olG^\circ=\{W\in\olG: W\cap(V\oplus0)=W\cap(0\oplus V)=0\}$ is canonically identified with $G$ by $g\longmapsto\operatorname{Graph}(g)$.

The involutions on $\olG$ considered below arise from three families of auxiliary data: 
(i) a nondegenerate classical form on $V$; 
(ii) a compatible scalar structure on $(V,b)$, namely a compatible complex structure on $(\mathbb R^{2n},b)$, a compatible quaternionic structure on $(\mathbb C^{2n},b)$, a compatible real form structure on $(\mathbb C^{n},b)$, or a compatible complex form structure on $(\mathbb H^{n},b)$; 
(iii) a configuration of subspaces in $V$, such as an isotropic pair of subspaces, a nondegenerate subspace, or a complementary pair of subspaces.

Apart from the group-type case, our construction follows a common pattern:
	\begin{enumerate}
		\item We start with a classical group $G=\Aut(V,b)$, its Grassmannian compactification $\olG$, and the open graph-like locus $\olGo$.
		\item We construct a natural involution $R$ of $\olG$ from one of the above families of auxiliary data. We compute the fixed-point locus $\olG^R$, which admits an explicit description in terms of classical Grassmannians, sometimes as a product or disjoint union of these Grassmannians.
		\item We intersect $\olG^R$ with $\olGo$ and identify the result with a moduli space of the same family of auxiliary data, (i), (ii) or (iii), used to define the involution. We prove that this graph-like locus (or its signature stratum, where applicable) of $\olG^R$ is diffeomorphic to the pseudo-Riemannian symmetric space $G/H$.
\end{enumerate}

Based on the above constructions, we organize the classical series into four Grassmannian families, as described below.

\begin{thm}
	Up to finite coverings  and connected components, the 54 classical series of pseudo-Riemannian symmetric spaces admit fixed-point Grassmannian models, organized into four families:
	\begin{enumerate}
		\item Group-type models: 10 series.
		\item Classical-form Grassmannians: 7 series, parametrizing nondegenerate classical forms of prescribed algebraic type, with signature strata where applicable.
		\item Scalar-structure Grassmannians: 20 series, comprising the complex-structure, quaternionic-structure, real-form, and  complex-form families.
		\item Subspace-configuration Grassmannians: 17 series, comprising isotropic-pair, nondegenerate-subspace, and  complementary-pair families.
	\end{enumerate}
\end{thm}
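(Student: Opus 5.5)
This theorem is a summary. I would prove it by exhausting Berger's list of the 54 classical series in Neretin's normalization: first build each of the three generic constructions from the introduction, then assign every series in the list to exactly one of them. Concretely, I would tabulate the 54 series $G/H$ with $G=\Aut(V,b)$ as in Table~\ref{tab:gp} and sort them by the type of the involution $\rho$ on $G$. There are four cases:
\begin{enumerate}
\item $\rho$ swaps two factors of $G\times G$. These are the group-type series, realized by $\olGo\cong G$ itself via $g\mapsto\Graph(g)$. There are $10$ classical groups, one for each nonempty entry of Table~\ref{tab:gp}, with $\UU(p,q)$ counted once.
\item $H$ is the stabilizer of a nondegenerate classical form on $V$.
\item $H$ is the centralizer of a compatible complex, quaternionic, real or complex-form structure.
\item $H$ is the stabilizer of an isotropic pair, a nondegenerate subspace, or a complementary pair.
\end{enumerate}
Counting the series in each case, after identifying finite covers and components, should give $10+7+20+17=54$.

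For each non-group case I would carry out the three-step pattern of the introduction uniformly.

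\emph{Step 1: the involution.} Given auxiliary data $c$, define $R$ on $\olG$ by transporting a subspace $W\subset V\oplus V$ through the map induced by $c$. For a form $c$, use $W\mapsto W^{\perp_{c\oplus c}}$, suitably twisted so that $\Graph(g)\mapsto\Graph(c^{-1}(g^{*})^{-1}\cdots)$. For a scalar structure $J$, use $W\mapsto (J\oplus J)W$ or its conjugate. For a subspace configuration, use the reflection $\sigma=\pm1$ on the splitting, acting by $W\mapsto(\sigma\oplus 1)W$ composed with the swap.

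\emph{Step 2: the fixed locus.} Check that $R$ preserves $\olG$, which for the isotropic Grassmannian means checking compatibility with $b\oplus(-b)$. Then compute $\olG^R$ as a union of classical Grassmannians by diagonalizing the induced linear involution on $V\oplus V$: a fixed $W$ splits along the eigenspaces.

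\emph{Step 3: the graph-like locus.} Show that $\Graph(g)\in\olG^R$ if and only if $g$ satisfies a twisted relation such as $g=c^{-1}g^{*}c$, or $gJ=Jg$ for an involutive twist. Such $g$ correspond to new data $c'=cg$ of the same type as $c$. This identifies $\olGo\cap\olG^R$ with the moduli space of such data.

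\emph{Step 4: identification with $G/H$.} Show that $G$ acts on this moduli space with stabilizer $H=G^\rho$. Where the data carry a signature, such as forms with indices $(p,q)$ or subspaces of prescribed type, the moduli space splits into open orbits, the signature strata. Each stratum is a single orbit by Witt/Sylvester-type classification theorems, and it is diffeomorphic to $G/H$.

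The main obstacle is the bookkeeping in the final count. One has to verify that the lists from cases 2, 3 and 4, across all fields and forms, hit each of Berger's 54 classical series exactly once up to coverings and components, and that no series is missed or double counted. Some series arise from two constructions; for example, $\OO(p,q)/\OO(p_1,q_1)\times\OO(p_2,q_2)$ is both a form moduli space and a nondegenerate-subspace space. For these one must fix a convention that assigns each series to one family. I would handle this by fixing, for each series, its Berger datum $(\mathfrak g,\mathfrak h)$, listing the outputs of the constructions as pairs $(\mathfrak g,\mathfrak h)$, and matching them against Neretin's table entry by entry. Exceptional-looking low-rank coincidences are absorbed by the "up to finite coverings" clause.
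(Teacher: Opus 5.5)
The overall architecture matches the paper's. You use the same four families and the same graph, then moduli space, then orbit pattern, with Witt/Sylvester-type classification for the signature strata. As in the paper, the count $10+7+20+17$ comes from matching the outputs against Neretin's lists, not from an independent classification.

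\textbf{The gap: the involutions in Step 1 are wrong as written.} Step 1 is where the real content lies.

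\emph{Scalar structures.} The map $W\mapsto (J\oplus J)W$ sends $\Graph(g)$ to $\Graph(JgJ^{-1})$. Its graph-like fixed locus is therefore $\{g: gJ=Jg\}=Z_G(J)=H$, which is the isotropy group, not $G/H$. Your Step 3 claim that such $g$ give new data of the same type also fails: $(gJ)^2=-g^2\neq -I$ in general. The missing ingredient is the swap of the two factors. The paper uses $R_\sigma(x,y)=(\sigma y,\sigma x)$, so that $R_\sigma(\Graph S)=\Graph(\sigma S^{-1}\sigma^{-1})$. The fixed condition is then $S\sigma S=\sigma$, i.e.\ $(S\sigma)^2=\sigma^2$, and $S\mapsto S\sigma$ identifies the locus with the space $\cT$ of structures of the same type.

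\emph{The general principle.} The involution must act on graphs by $g\mapsto\rho(g)^{-1}$, whose fixed set contains the Cartan image $\{g\rho(g)^{-1}\}$ of $G/H$. It must not act by $g\mapsto\rho(g)$, whose fixed set is $G^\rho$.

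\emph{Forms.} The same slip appears here. If your formula is completed to $\Graph(g)\mapsto\Graph(c^{-1}(g^{*})^{-1}c)=\Graph((g^{\dagger})^{-1})$, its fixed set is again $G_c$. Your Step 3 condition $g=g^{\dagger}$ is the correct one. It corresponds to the paper's $W\mapsto W^{\perp_\beta}$ with $\beta((x,y),(x',y'))=b(x,y')-b(y,x')$, which is $B$ twisted by the swap and sends $\Graph(T)$ to $\Graph(T^{\dagger})$.

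\emph{Subspace configurations.} Read literally as a composition, $(s\oplus 1)$ followed by the swap squares to $s\oplus s$, which acts nontrivially on $\olG$, so it is not an involution. In the isotropic-pair case $s$ is a $b$-anti-isometry, so $s\oplus 1$ does not even preserve $\IG(E,B)$: it sends the diagonal to the non-isotropic $\Graph(s)$. The correct map is $R_L(x,y)=(sy,sx)$, which is the conjugate of the swap by $s\oplus 1$. Its fixed graphs are exactly the $g$ with $(gs)^2=1$, and $gs$ encodes the new splitting.

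\textbf{Step 2 also covers only some families.} When $\sigma^2=-I$, $R_\sigma^2=-I$, so there is no eigenspace splitting. Instead $R_\sigma$ defines a new complex or quaternionic scalar multiplication on $E$, and the fixed subspaces are $\C$- or $\HH$-subspaces. Identifying $\IG(E,B)^{R_\sigma}$ then requires the transported forms, for example $\beta=B-\mathbf{i}\eps B(J\cdot,\cdot)$ or the quaternionic $\beta=C-C(\cdot,J\cdot)\mathbf{j}$, giving $\IG_{\C}(E,\beta)$ or $\IG_{\HH}(E,\beta)$. For the form family, $R_\beta$ is not induced by a linear map at all. The paper uses eigenspace splitting only for the real-form, complex-form and subspace families.

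\textbf{The overlap you anticipate does not arise.} In the paper's scheme Type II consists only of quotients $\GL(n,\F)/\Aut(V,a)$. So $\OO(r,s)/(\OO(p,q)\times\OO(r-p,s-q))$ occurs only in the nondegenerate-subspace family.
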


The following table \ref{tab:54-classical-series} lists all $54$ series explicitly.
{\footnotesize
	\renewcommand{\arraystretch}{1.6}
	\setlength{\abovedisplayskip}{0pt}
	\setlength{\belowdisplayskip}{0pt}
	\setlength{\abovedisplayshortskip}{0pt}
	\setlength{\belowdisplayshortskip}{0pt}
	\begin{longtable}{| M{0.04\textwidth} | M{0.18\textwidth} | C{0.03\textwidth} | M{0.70\textwidth} |}
		\caption{The $54$ classical series in the four Grassmannian families}
		\label{tab:54-classical-series}
		\\
		\hline
		\textbf{Type} & \textbf{Grassmannian family} & \textbf{No.} & \textbf{Pseudo-Riemannian symmetric spaces}
		\\
		\hline
		\endfirsthead
		\multicolumn{4}{|c|}{\tablename\ \thetable\ --- \textit{continued}} \\
		\hline
		\textbf{Type} & \textbf{Grassmannian family} & \textbf{No.} & \textbf{Pseudo-Riemannian symmetric spaces}
		\\
		\hline
		\endhead
		\multicolumn{4}{|r|}{\textit{continued}} \\
		\endfoot
		\hline
		\endlastfoot
		
		\textbf{I} & Group-type models & 10
		& $\GL(n,\mathbb F),\; \OO(p,q),\; \UU(p,q),\; \SpG(p,q),\; \SpG(2m,\mathbb R),\; \SpG(2m,\mathbb C),\; \OO(n,\mathbb C),\; \Ostar(2n).$
		\\
		\hline
		
		\textbf{II} & Classical-form Grassmannians & 7
		& $\begin{aligned}[t]
			&\dfrac{\GL(n,\mathbb R)}{\OO(p,q)},\:
			\dfrac{\GL(2m,\mathbb R)}{\SpG(2m,\mathbb R)},\:
			\dfrac{\GL(n,\mathbb C)}{\OO(n,\mathbb C)},\:
			\dfrac{\GL(2m,\mathbb C)}{\SpG(2m,\mathbb C)},\:
			\dfrac{\GL(n,\mathbb C)}{\UU(p,q)},\:
			\dfrac{\GL(n,\mathbb H)}{\SpG(p,q)},\:
			\dfrac{\GL(n,\mathbb H)}{\Ostar(2n)}.
		\end{aligned}$
		\\
		\hline
		
		\multirow{4}{*}{\textbf{III}}
		& Complex-structure Grassmannians & 5
		& $\dfrac{\GL(2n,\mathbb R)}{\GL(n,\mathbb C)},\:
		\dfrac{\OO(2p,2q)}{\UU(p,q)},\:
		\dfrac{\SpG(2n,\mathbb R)}{\UU(p,q)},\:
		\dfrac{\OO(n,n)}{\OO(n,\mathbb C)},\:
		\dfrac{\SpG(4m,\mathbb R)}{\SpG(2m,\mathbb C)}.$
		\\
		\cline{2-4}
		& Quaternionic-structure Grassmannians & 5
		& $\dfrac{\GL(2n,\mathbb C)}{\GL(n,\mathbb H)},\:
		\dfrac{\OO(2n,\mathbb C)}{\Ostar(2n)},\:
		\dfrac{\SpG(2n,\mathbb C)}{\SpG(p,q)},\:
		\dfrac{\UU(2p,2q)}{\SpG(p,q)},\:
		\dfrac{\UU(n,n)}{\Ostar(2n)}.$
		\\
		\cline{2-4}
		& Real-form Grassmannians & 5
		& $\dfrac{\GL(n,\mathbb C)}{\GL(n,\mathbb R)},\:
		\dfrac{\OO(n,\mathbb C)}{\OO(p,q)},\:
		\dfrac{\SpG(2m,\mathbb C)}{\SpG(2m,\mathbb R)},\:
		\dfrac{\UU(p,q)}{\OO(p,q)},\:
		\dfrac{\UU(m,m)}{\SpG(2m,\mathbb R)}.$
		\\
		\cline{2-4}
		& Complex-form Grassmannians & 5
		& $\dfrac{\GL(n,\mathbb H)}{\GL(n,\mathbb C)},\:
		\dfrac{\SpG(p,q)}{\UU(p,q)},\:
		\dfrac{\Ostar(2n)}{\UU(p,q)},\:
		\dfrac{\SpG(m,m)}{\SpG(2m,\mathbb C)},\:
		\dfrac{\Ostar(2n)}{\OO(n,\mathbb C)}.$
		\\
		\hline
		
		\multirow{3}{*}{\textbf{IV}}
		& Isotropic-pair Grassmannians & 7
		& $\dfrac{\OO(n,n)}{\GL(n,\mathbb R)},\:
		\dfrac{\UU(n,n)}{\GL(n,\mathbb C)},\:
		\dfrac{\SpG(n,n)}{\GL(n,\mathbb H)},\:
		\dfrac{\SpG(2n,\mathbb R)}{\GL(n,\mathbb R)},\:
		\dfrac{\SpG(2n,\mathbb C)}{\GL(n,\mathbb C)},\:
		\dfrac{\OO(2n,\mathbb C)}{\GL(n,\mathbb C)},\:
		\dfrac{\Ostar(4n)}{\GL(n,\mathbb H)}.$
		\\
		\cline{2-4}
		& Nondegenerate-subspace Grassmannians & 7
		& $\begin{aligned}[t]
			&\dfrac{\OO(r,s)}{\OO(p,q)\times \OO(r-p,s-q)},\:
			\dfrac{\UU(r,s)}{\UU(p,q)\times \UU(r-p,s-q)},\:
			\dfrac{\SpG(r,s)}{\SpG(p,q)\times \SpG(r-p,s-q)},\\
			&\dfrac{\SpG(2n,\mathbb R)}{\SpG(2k,\mathbb R)\times \SpG(2n-2k,\mathbb R)},\:
			\dfrac{\SpG(2n,\mathbb C)}{\SpG(2k,\mathbb C)\times \SpG(2n-2k,\mathbb C)},\\
			&\dfrac{\OO(n,\mathbb C)}{\OO(k,\mathbb C)\times \OO(n-k,\mathbb C)},\:
			\dfrac{\Ostar(2n)}{\Ostar(2k)\times \Ostar(2n-2k)}.
		\end{aligned}$
		\\
		\cline{2-4}
		& Complementary-pair Grassmannians & 3
		& $\dfrac{\GL(n,\mathbb R)}{\GL(k,\mathbb R)\times \GL(n-k,\mathbb R)},\:
		\dfrac{\GL(n,\mathbb C)}{\GL(k,\mathbb C)\times \GL(n-k,\mathbb C)},\:
		\dfrac{\GL(n,\mathbb H)}{\GL(k,\mathbb H)\times \GL(n-k,\mathbb H)}.$
		\\
		\hline
		\multicolumn{2}{|r|}{\textbf{Total}} & \textbf{54} & $10+7+20+17=54$
		\\
		\hline
	\end{longtable}
}

\begin{rmk}
	In comparison with Neretin's classification, our Type IV.1 (isotropic-pair Grassmannians) corresponds to List 2, our Type IV.2 (nondegenerate-subspace Grassmannians) to List 4, and our Type IV.3 (complementary-pair Grassmannians) to List 5 in \cite{N}. In addition, List 3 consists of three classical automorphism groups $\Aut(V)$ together with four scalar-structure Grassmannians associated with $\Aut(V)$. Meanwhile, List 1 contains seven classical groups $\Aut(V,b)$, sixteen scalar-structure Grassmannians associated with $\Aut(V,b)$, and the complete family of seven classical-form Grassmannians.
\end{rmk}

The paper is organized as follows.
Section \ref{sec:group} is devoted to Group-type models.
Section \ref{sec:b-related} investigates Classical-form Grassmannians.
Section \ref{sec:F-related} addresses Scalar-structure Grassmannians.
Section \ref{sec:subspace} studies Subspace-configuration Grassmannians.
Finally, Section \ref{sec:rmk} contains the concluding remarks.

\bigskip

$\mathbf{Acknowledgements.}$  The first author is supported by the National Natural
Science Foundation of China (No. 12471045). The second author is
supported by  research grants from the Research Grants Council of the Hong
Kong Special Administrative Region, China (No. 14302225, No. 14302224, No. 14305923).
The authors confirm that no AI tools were utilized in any stage of this research or manuscript preparation.

\bigskip
\section{Type I: Group-type models}\label{sec:group}
We begin by recalling the classical Grassmannians that serve as the ambient compact manifolds throughout the paper.
Let $\mathbb{F}=\mathbb{R},\mathbb{C},\mathbb{H}$, and let $V\cong \mathbb{F}^n$.
We write $\Gr_{\mathbb{F}}(k,V)$ for the Grassmannian of $k$-dimensional $\mathbb{F}$-subspaces of $V$.
The ordinary real, complex, and quaternionic Grassmannians have the standard homogeneous-space realizations
\[
\Gr_{\mathbb{R}}(k,\mathbb{R}^n)\cong \frac{\OO(n)}{\OO(k) \times \OO(n-k)},   \Gr_{\mathbb{C}}(k,\mathbb{C}^n)\cong \frac{\UU(n)}{\UU(k) \times \UU(n-k)},   
\Gr_{\mathbb{H}}(k,\mathbb{H}^n)\cong \frac{\SpG(n)}{\SpG(k) \times \SpG(n-k)}.
\]

Now let $V\cong\mathbb{F}^{2n}$ be equipped with a nondegenerate split classical form $b$\footnote{For symmetric and Hermitian forms, split means maximal Witt index; in the signature cases this is equivalent to equal positive and negative indices.}.
We denote by
\[
\IG(V,b)
=\left\{W\in \Gr_{\mathbb{F}}(n,V): b|_{W}=0\right\}
\]
the Grassmannian of maximal $b$-isotropic subspaces of $V$.
The seven possibilities occurring for split classical forms are summarized in Table \ref{tab:IG} below.
\vspace{-8pt}
\begin{table}[htbp]
	\centering
	\caption{Maximal-isotropic Grassmannians \( IG(V,b) \) for split classical forms \( b \)}
	\label{tab:IG}
	\renewcommand{\arraystretch}{1.15}\setlength{\tabcolsep}{1pt}
	\begin{tabular}{|c|c|c|c|c|c|c|c|}
		\hline
		\(V\) & \(\mathbb{R}^{2n}\) & \(\mathbb{R}^{2n}\) & \(\mathbb{C}^{2n}\) 
		& \(\mathbb{C}^{2n}\) 	& \(\mathbb{C}^{2n}\) & \(\mathbb{H}^{2n}\) & \(\mathbb{H}^{2n}\) \\
		\hline
		\(b\) & \small{\((n,n)\)-symm.} & \small{skew-symm.} 
		& \small{symm.} & \small{skew-symm.} 	& \small{\((n,n)\)-Herm.}
		& \small{\((n,n)\)-Herm.} & \small{skew-Herm.} \\
		\hline
		\small{\(IG(V,b)\)} & \(\OO(n)\) & \(\frac{\UU(n)}{\OO(n)}\) & \(\frac{\OO(2n)}{\UU(n)}\) 
		& \(\frac{\SpG(n)}{\UU(n)}\)  & \(\UU(n)\) & \(\SpG(n)\) & \(\frac{\UU(2n)}{\SpG(n)}\) \\
		\hline
	\end{tabular}
\end{table}

We now turn to the group-type symmetric spaces.
Let $(V\cong \mathbb{F}^n,b)$ be a classical space, where either $b=0$ or $b$ is a nondegenerate classical form, and set
\[
G=\Aut(V,b).
\]
When $b=0$, this means $G=\GL(n, \mathbb{F})$.
Put
\[
E=V\oplus V,\qquad B=b\oplus(-b).
\]
By \cite{He}, the Grassmannian compactification of $G$ is
\[
\olG
=
\begin{cases}
	\Gr_{\mathbb{F}}(n,E), & b=0,\\[1mm]
	\IG(E,B), & b\neq 0.
\end{cases}
\]
We denote by
\[
\olGo
=\left\{
W\in \olG:\;
W\cap(V\oplus 0)=0,\;
W\cap(0\oplus V)=0
\right\}
\]
its graph-like locus.

The following proposition gives the canonical realization of $G$ inside $\olG$ \cite{CL}.

\begin{prop}
	The map
	\[
	\Gamma\colon G\to \olGo,\quad g\mapsto \Graph(g),
	\]
	is a diffeomorphism. Moreover, $\Gamma$ is $G\times G$-equivariant, where $G\times G$ acts on $E=V\oplus V$ by
	\[
	(g_1,g_2)\cdot(x,y)=(g_1 x,g_2 y),
	\]
	and on $G$ by
	\[
	(g_1,g_2)\cdot g = g_2 g g_1^{-1}.
	\]
\end{prop}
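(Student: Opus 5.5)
We prove the statement in three steps: $\Gamma$ lands in $\olGo$, it is a bijection onto $\olGo$, and it is smooth with smooth inverse in a standard affine chart. Equivariance is then a direct computation.

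\textbf{Step 1: $\Gamma$ lands in $\olGo$.} For $g\in G$, $\Graph(g)=\{(x,gx):x\in V\}$ is an $n$-dimensional $\F$-subspace of $E$. Its intersection with $V\oplus 0$ consists of pairs with $gx=0$, hence $x=0$. Its intersection with $0\oplus V$ consists of pairs with $x=0$. So both intersections vanish. When $b\neq 0$ we also need $\Graph(g)$ to be $B$-isotropic, and this is exactly the condition $g\in\Aut(V,b)$:
\[
B((x,gx),(y,gy))=b(x,y)-b(gx,gy)=0 .
\]
Since $\dim_\F\Graph(g)=n=\tfrac12\dim_\F E$, it is a maximal isotropic subspace and lies in $\IG(E,B)$.

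\textbf{Step 2: $\Gamma$ is a bijection onto $\olGo$.} Let $W\in\olGo$. Since $W\cap(0\oplus V)=0$ and $\dim W=n$, the first projection $\pi_1|_W\colon W\to V$ is injective, hence an isomorphism. Therefore $W=\{(x,gx)\}$ with $g=\pi_2\circ(\pi_1|_W)^{-1}$, an $\F$-linear map. The condition $W\cap(V\oplus0)=0$ says $\ker g=0$, so $g\in\GL(V)$. In the case $b\neq0$, the isotropy of $W$ gives $b(gx,gy)=b(x,y)$ for all $x,y$ by the identity in Step 1, so $g\in\Aut(V,b)$. Injectivity of $\Gamma$ is clear, because the graph determines the map.

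\textbf{Step 3: smoothness in both directions.} Use the affine chart
\[
U=\{W\in\Gr_\F(n,E): W\cap(0\oplus V)=0\},
\]
which is identified with $\operatorname{Hom}_\F(V,V)$ by sending $A$ to $\Graph(A)$. This identification is the standard smooth chart of the Grassmannian. In this chart:
\begin{itemize}[label={}]
\item for $b=0$, $\olGo$ corresponds to the open subset $\GL(V)$, and $\Gamma$ is the identity map, hence a diffeomorphism;
\item for $b\neq0$, $\IG(E,B)\cap U$ corresponds to $\{A: b(Ax,Ay)=b(x,y)\}$, i.e.\ the closed subgroup of $\operatorname{Hom}_\F(V,V)$ of maps preserving $b$ (whose invertible part is $G$), and $\olGo$ corresponds to $G$ itself.
\end{itemize}
The main point to check is that the submanifold structure $\IG(E,B)$ inherits in this chart agrees with the Lie group structure of $G$. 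Both are embedded submanifolds of $\operatorname{Hom}_\F(V,V)$ with the same underlying set. Since $G$ is a closed subgroup of $\GL(V)$, its Lie group structure is its embedded-submanifold structure, and the two coincide.

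\textbf{Step 4: equivariance.} For $(g_1,g_2)\in G\times G$, substituting $x=g_1y$ gives
\[
(g_1,g_2)\cdot\Graph(g)=\{(g_1x,g_2gx)\}=\{(y,g_2gg_1^{-1}y)\}=\Graph(g_2gg_1^{-1}).
\]
The action of $G\times G$ on $E$ preserves $B$ and the two summands $V\oplus0$ and $0\oplus V$, so it preserves $\olG$ and $\olGo$. Thus $\Gamma$ intertwines the two actions.
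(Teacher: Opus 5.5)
Your proof is correct and takes the same route as the paper. You check that $\Graph(g)$ is maximal $B$-isotropic and transverse to both summands, recover $g=\operatorname{pr}_2\circ(\operatorname{pr}_1|_W)^{-1}$ from any $W\in\olGo$, and verify equivariance by the substitution $x=g_1y$. Your Step 3 just spells out, via the affine chart $\{W: W\cap(0\oplus V)=0\}$ and uniqueness of embedded-submanifold structures, the smoothness that the paper asserts in one line.

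One small correction: since $b$ is nondegenerate, every $A$ with $b(Ax,Ay)=b(x,y)$ is already invertible. So that set equals $G$ outright, not merely having $G$ as its ``invertible part'', and calling it a ``closed subgroup of $\operatorname{Hom}_\F(V,V)$'' is slightly loose.
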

\begin{proof}
	For any $g\in G$,
	\[
	B\big((u,gu),(v,gv)\big)=b(u,v)-b(gu,gv)=0,
	\]
	so $\Graph(g)$ is maximal $B$-isotropic. Invertibility of $g$ gives the transversality conditions, thus $\Graph(g)\in\olGo$.

		Conversely, take $W\in\olGo$. Since $W\cap(0\oplus V)=0$ and $\dim_{\mathbb{F}}W=\dim_{\mathbb{F}}V$, $\operatorname{pr}_1|_W\colon W\to V$ is an isomorphism. Define
	\[
	T=\operatorname{pr}_2\circ\bigl(\operatorname{pr}_1|_W\bigr)^{-1}\colon V\to V,
	\]
	so $W=\Graph(T)=\{(u,Tu)\mid u\in V\}$.
	
	If $b=0$, then $T\in \GL(n, \mathbb{F})=G$.
	If $b\neq 0$, $W\in\IG(E,B)$ implies
	\[
	0=B\big((u,Tu),(v,Tv)\big)=b(u,v)-b(Tu,Tv),
	\]
	so $b(Tu,Tv)=b(u,v)$ for all $u,v\in V$, hence $T\in \Aut(V,b)=G$. The map $\Gamma$ is surjective.

	The inverse map $W\mapsto \operatorname{pr}_2\circ(\operatorname{pr}_1|_W)^{-1}$ is smooth on $\olGo$, hence $\Gamma$ is a diffeomorphism.
	Equivariance follows from
	\[
	(g_1,g_2)\cdot \Graph(g)
	=\{(g_1 u,g_2 g u)\mid u\in V\}
	=\Graph(g_2 g g_1^{-1}).
	\qedhere
	\]
\end{proof}

We now describe the realization of $G$ as a symmetric space.
\begin{prop}
	There are $G\times G$-equivariant diffeomorphisms
	\[
	\olGo\cong G \cong \frac{G\times G}{\Delta G},
	\]
	where $\Delta G=\{(g,g)\mid g\in G\}$.
	Thus $\olGo$ is a group-type pseudo-Riemannian symmetric space.
\end{prop}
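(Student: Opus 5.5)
The first diffeomorphism $\olGo\cong G$ is exactly the preceding proposition: $\Gamma$ is a $G\times G$-equivariant diffeomorphism, with $G\times G$ acting on $G$ by $(g_1,g_2)\cdot g=g_2gg_1^{-1}$. It therefore remains to identify $G$, with this action, with $(G\times G)/\Delta G$, and then to exhibit the symmetric-space structure.

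For the second diffeomorphism, the action $(g_1,g_2)\cdot g=g_2gg_1^{-1}$ is transitive: $(1,g)\cdot 1=g$. It is also smooth, being built from group multiplication and inversion. The stabilizer of the identity is $\{(g_1,g_2): g_2g_1^{-1}=1\}=\Delta G$. The orbit map
\[
\phi\colon G\times G\to G,\qquad (g_1,g_2)\mapsto g_2g_1^{-1},
\]
therefore descends to a smooth $G\times G$-equivariant bijection $(G\times G)/\Delta G\to G$. To see that this is a diffeomorphism, I would write down a smooth inverse $g\mapsto[(1,g)]$ directly. This is well defined, and composing it with $\phi$ in either order gives the identity, since $[(g_1,g_2)]=[(1,g_2g_1^{-1})]$. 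Composing with $\Gamma$ yields the chain $\olGo\cong G\cong (G\times G)/\Delta G$, each step $G\times G$-equivariant.

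For the symmetric-space claim, take the involution $\sigma(g_1,g_2)=(g_2,g_1)$ of $G\times G$. Its fixed subgroup is exactly $\Delta G$, so $(G\times G)/\Delta G$ is a symmetric space in the sense recalled in the introduction, with $H=\Delta G=(G\times G)^\sigma$. Under the identification with $G$, the induced symmetry at $1$ is $g\mapsto g^{-1}$.

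For the pseudo-Riemannian structure, I would use the $\mathrm{Ad}$-invariant nondegenerate form on $\mathfrak g$. When $G$ is simple or reductive this is the Killing or trace form, e.g.\ $\mathrm{Re}\,\mathrm{tr}(XY)$ for classical matrix groups; this form is nondegenerate for all the groups in Table~\ref{tab:gp}. It defines a bi-invariant pseudo-Riemannian metric on $G$, and inversion is an isometry of that metric. The main point requiring care is this nondegeneracy, in particular for $\GL(n,\F)$, where the center must be handled. The trace form $\mathrm{Re}\,\mathrm{tr}(XY)$ is nondegenerate on all of $\mathfrak{gl}(n,\F)$, and this suffices. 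Everything else is routine.
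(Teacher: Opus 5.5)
Your proposal is correct and follows the paper's proof: the action $(g_1,g_2)\cdot g=g_2gg_1^{-1}$ is transitive with stabilizer $\Delta G$, and the swap involution $(g_1,g_2)\mapsto(g_2,g_1)$ has fixed group exactly $\Delta G$. Your explicit smooth inverse $g\mapsto[(1,g)]$ and the bi-invariant metric built from $\mathrm{Re}\,\mathrm{tr}(XY)$ only spell out details the paper leaves implicit, namely smoothness of the inverse and the existence of an invariant nondegenerate metric when $G$ is reductive rather than semisimple.
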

\begin{proof}
	The $G\times G$-action $(g_1,g_2)\cdot g = g_2 g g_1^{-1}$ on $G$ is transitive, with stabilizer of the identity equal to $\Delta G$, so $G\cong (G\times G)/\Delta G$.
	Define the involution $\tau\colon G\times G\to G\times G$ by $\tau(g_1,g_2)=(g_2,g_1)$. Its fixed-point subgroup is exactly $\Delta G$, so $(G\times G,\Delta G)$ forms a symmetric pair, and the quotient $\frac{G\times G}{\Delta G}$ is a pseudo-Riemannian symmetric space.
	\qedhere
\end{proof}

Under our conventions, this gives the ten classical group-type series:
\[
\GL(n,\R),\ \GL(n,\C),\ \GL(n,\HH),\ \OO(p,q),\ \UU(p,q),\ \SpG(p,q),\ \SpG(2n,\R),\ \SpG(2n,\C),\ \OO(n,\C),\ \Ostar(2n),
\]
each regarded as the group-type symmetric space $(G\times G)/\Delta G$.

\bigskip
\section{Type II: Classical-form Grassmannians}\label{sec:b-related}
Let $V\cong\F^n$, where $\F=\R,\C,\HH$, and set $E=V\oplus V$, $G=\GL(n, \mathbb{F})$, and $\olG=\Gr_{\F}(n,E)$. We retain $\olGo$ for the  graph-like locus of $\olG$.

Let \(b\) be a nondegenerate classical form on \(V\).
Define an associated form $\beta$ on $E$ by
\[
\beta\colon E\times E\to\mathbb{F},\qquad
\beta((x,y),(x',y'))
= b(x,y')-b(y,x').
\]
The algebraic type of \(\beta\) is determined by that of \(b\):
\begin{enumerate}[label=\textup{(\arabic*)}]
	\item   $V \cong \R^n$, $b$  symmetric $\implies$ $\beta$   skew-symmetric;
	\item  $V \cong \R^{2m}$, $b$ skew-symmetric  $\implies$  $\beta$  symmetric with signature $(2m,2m)$;
	\item  $V \cong \C^n$, $b$ symmetric $\implies$ $\beta$   skew-symmetric;
	\item  $V \cong \C^{2m}$, $b$ skew-symmetric  $\implies$  $\beta$  symmetric;
	\item  $V \cong \C^n$, $b$ Hermitian $\implies$ $\beta$   skew-Hermitian with signature $(n,n)$;
	\item  $V \cong \HH^n$, $b$ Hermitian $\implies$ $\beta$   skew-Hermitian;
	\item  $V \cong \HH^n$, $b$ skew-Hermitian $\implies$ $\beta$  Hermitian with signature $(n,n)$.
\end{enumerate}
Note that all the above $\beta$ are split forms. Accordingly, for each case we obtain the associated Grassmannian
$\IG(E,\beta)$, which consists of all maximal  $\beta$-isotropic subspaces of $E$.

Since \(\beta\) is nondegenerate, orthogonal complementation defines a smooth involution
\[
R_\beta\colon \olG\longrightarrow\olG,\qquad
W\longmapsto W^{\perp_\beta}.
\]
Its fixed points are exactly the maximal $\beta$-isotropic subspaces.

\begin{prop}
	The fixed-point locus of $R_\beta$ is
	\[
	\olG^{R_\beta}=\IG(E,\beta).
	\]
\end{prop}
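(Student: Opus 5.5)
The plan is to check first that $R_\beta$ is a well-defined involution of $\olG=\Gr_{\F}(n,E)$, and then read off its fixed points directly.

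\textbf{Step 1: well-definedness and the involution property.} Since $\beta$ is nondegenerate on $E$, which has $\dim_{\F}E=2n$, we have $\dim_{\F}W^{\perp_\beta}=2n-\dim_{\F}W=n$ for every $W\in\olG$. So $R_\beta$ maps $\olG$ to itself. Each listed $\beta$ is symmetric, skew-symmetric, Hermitian or skew-Hermitian, so $\beta$-orthogonality is a symmetric relation. Hence the left and right orthogonal complements agree, and $W\subseteq (W^{\perp_\beta})^{\perp_\beta}$. Equality of dimensions then gives $(W^{\perp_\beta})^{\perp_\beta}=W$, so $R_\beta^2=\mathrm{id}$.

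Over $\HH$, with the usual right-vector-space conventions, I need $W^{\perp_\beta}$ to be an $\HH$-subspace. This follows from sesquilinearity of $\beta$. Smoothness is standard: in a local chart the complement is given by an explicit rational expression in the chart coordinates.

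\textbf{Step 2: fixed points are maximal isotropic.} Suppose $W^{\perp_\beta}=W$. Then $\beta(w,w')=0$ for all $w,w'\in W$, so $W$ is $\beta$-isotropic of dimension $n$. This is half of $\dim E$, so $W$ is maximal isotropic. Here I use that each $\beta$ is split, as noted in the list, so maximal isotropic subspaces have dimension exactly $n$ and $\IG(E,\beta)\subseteq\Gr_{\F}(n,E)$.

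\textbf{Step 3: maximal isotropic subspaces are fixed.} Conversely, let $W\in\IG(E,\beta)$. Isotropy gives $W\subseteq W^{\perp_\beta}$. Both sides have dimension $n$, so they are equal and $R_\beta(W)=W$. Together with Step 2, this proves $\olG^{R_\beta}=\IG(E,\beta)$.

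\textbf{Main obstacle.} The only delicate point is the reflexivity of orthogonality used in Step 1. For the quaternionic Hermitian and skew-Hermitian cases, this requires checking the $\eps$-Hermitian symmetry $\beta(e',e)=\pm\overline{\beta(e,e')}$ from the definition $\beta((x,y),(x',y'))=b(x,y')-b(y,x')$. That is a one-line computation using the corresponding symmetry of $b$.
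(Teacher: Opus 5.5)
Your argument is correct and is the reasoning the paper compresses into the sentence before the proposition. There, nondegeneracy and $\pm$-Hermitian symmetry of $\beta$ make $W\mapsto W^{\perp_\beta}$ an involution of $\Gr_{\F}(n,E)$, and your dimension count shows that $W^{\perp_\beta}=W$ holds exactly when $W$ is an $n$-dimensional $\beta$-isotropic subspace, i.e.\ a point of $\IG(E,\beta)$. (The appeal to splitness in Step 2 is unnecessary: $U\subseteq U^{\perp_\beta}$ already forces $\dim_{\F}U\le n$, and the paper defines $\IG(E,\beta)$ as the $n$-dimensional isotropic subspaces anyway.)
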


We now restrict to the graph-like locus and set
\[
\mathcal{X}_b:=\olG^{R_\beta}\cap\olGo
=\{W\in\olGo\,|\, R_\beta(W)=W\}.
\]

\begin{prop}
	There exists a canonical smooth diffeomorphism
	\[
	\mathcal{X}_b \cong \mathcal{T}_b,
	\]
	where $\mathcal{T}_b$ is the space of nondegenerate classical forms on $V$ of the same algebraic type as $b$.
\end{prop}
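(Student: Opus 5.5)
We will use the graph description of $\olGo$ from the first proposition of Section \ref{sec:group}. Every $W\in\olGo$ is uniquely $W=\Graph(T)$ with $T\in\GL(n,\F)$, and $W\mapsto T$ is a diffeomorphism $\olGo\cong G$. So it suffices to decide for which $T$ the subspace $\Graph(T)$ is $\beta$-isotropic. Because $\dim_\F\Graph(T)=n=\frac12\dim_\F E$, isotropy already means maximal isotropy. It is therefore equivalent to $\Graph(T)^{\perp_\beta}=\Graph(T)$, that is, to $R_\beta$-fixedness, using the preceding proposition $\olG^{R_\beta}=\IG(E,\beta)$.

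The key computation is
\[
\beta\bigl((u,Tu),(v,Tv)\bigr)=b(u,Tv)-b(Tu,v).
\]
Hence $\Graph(T)\in\mathcal X_b$ if and only if $b(u,Tv)=b(Tu,v)$ for all $u,v\in V$, i.e.\ $T$ is $b$-self-adjoint. We then define the candidate map
\[
\Phi\colon \mathcal X_b\to\mathcal T_b,\qquad \Graph(T)\mapsto b_T,\quad b_T(u,v):=b(u,Tv).
\]
We fix the convention that $b$ is $\F$-linear in the second slot and conjugate-linear (or linear, over $\R$ and in the bilinear cases) in the first. With this convention $b_T$ is sesquilinear or bilinear of the same kind as $b$, and it is nondegenerate because $T$ is invertible.

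The step requiring care is that $b_T$ has the same symmetry type as $b$. Let $\epsilon$ denote the sign with $b(v,u)=\epsilon\,\overline{b(u,v)}$, where the bar is trivial in the bilinear cases. Then
\[
b_T(v,u)=b(v,Tu)=b(Tv,u)=\epsilon\,\overline{b(u,Tv)}=\epsilon\,\overline{b_T(u,v)},
\]
using self-adjointness in the second equality. So self-adjointness of $T$ is exactly the condition that $b_T$ is $\epsilon$-(Hermitian-)symmetric like $b$. For the inverse, any nondegenerate form $c$ of the same type is represented by a unique $T\in\GL(n,\F)$ with $c(u,v)=b(u,Tv)$, namely $T=b^{\flat\,-1}\circ c^{\flat}$ via the musical isomorphisms $V\to V^*$ (or to the conjugate dual). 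Running the symmetry computation backwards shows that this $T$ is $b$-self-adjoint. Thus $\Phi$ is a bijection.

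Over $\HH$ the main obstacle is bookkeeping of the side on which scalars act. We take $V$ to be a right $\HH$-module and $T$ to be $\HH$-linear; then $b(u,Tv\lambda)=b(u,Tv)\lambda$, so $b_T$ has the correct sesquilinearity. The identity $\overline{b(u,Tv)}$ versus $b(Tv,u)$ is handled by the same $\epsilon$-Hermitian axiom. We will check this case explicitly, together with the case of a Hermitian $b$ over $\C$.

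Smoothness follows because $T\mapsto b_T$ is the restriction of a linear isomorphism $\mathrm{End}_\F(V)\to\{\text{sesquilinear/bilinear forms}\}$. This isomorphism carries the closed submanifold of $b$-self-adjoint invertible operators onto the open subset $\mathcal T_b$ of the vector space of forms of the given type. Composing with the diffeomorphism $\Gamma^{-1}$ restricted to $\mathcal X_b$ gives the canonical diffeomorphism $\mathcal X_b\cong\mathcal T_b$.
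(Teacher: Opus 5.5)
Your proposal is correct and follows the paper's proof. You identify $\mathcal X_b$, via graphs, with the $b$-self-adjoint operators in $\GL(n,\F)$ using $\beta\bigl((u,Tu),(v,Tv)\bigr)=b(u,Tv)-b(Tu,v)$, and then send $T\mapsto b(\cdot,T\cdot)$. You fill in details the paper leaves implicit: the explicit $\epsilon$-symmetry check, the inverse built from the musical isomorphisms, the quaternionic sesquilinearity bookkeeping, and smoothness via the linear isomorphism from endomorphisms to forms.
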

\begin{proof}
	Any $W\in\mathcal{X}_b$ is the graph of a unique invertible linear operator $T\in G=\GL(n,\mathbb{F})$, i.e.,
	\[
	W=\Graph(T)=\{(u,Tu)\,|\,u\in V\}.
	\]
	The fixed-point condition $R_\beta(W)=W$ is equivalent to the self-adjointness relation
	\[
	b(u,Tv)=b(Tu,v),\quad \forall\,u,v\in V.
	\]
	Thus
	\[
	\mathcal{X}_b\cong\mathcal{T}_b':=\{T\in G\,|\,b(u,Tv)=b(Tu,v),\,\forall u,v\in V\}.
	\]
	
	Define the smooth map
	\[
	f\colon\mathcal{T}_b'\to\mathcal{T}_b,\qquad T\mapsto a_T:=b(\,\cdot\,,T\,\cdot\,).
	\]
	Since $b$ is nondegenerate and $T$ is invertible, $a_T$ is a nondegenerate form on $V$. The operator relation $b(u,Tv)=b(Tu,v)$ guarantees that $a_T$ shares the same classical type as $b$. Conversely, for $a\in\mathcal{T}_b$,
	the common algebraic type of $a$ and $b$ gives a unique invertible $T$ satisfying $b(u,Tv)=b(Tu,v)$ and $a(u,v)=b(u,Tv)$. Thus $f$ is a diffeomorphism.
\end{proof}

The group \(G=\GL(n, \mathbb{F})\) acts on \(\mathcal{T}_b\) by congruence,
\[
(g\cdot a)(u,v)
= a(g^{-1}u,g^{-1}v).
\]
Its orbits are exactly the isometry classes of classical forms with the same algebraic type as $b$.

If \(b\) is real skew-symmetric, complex symmetric, complex skew-symmetric, or quaternionic skew-Hermitian, there is a single isometry class in fixed dimension, and \(G\) acts transitively on \(\mathcal{T}_b\).
If \(b\) is real symmetric, complex Hermitian, or quaternionic Hermitian, then
\[
\mathcal{T}_b
=
\bigsqcup_{p+q=n}\mathcal{T}_{p,q},\qquad
\mathcal{T}_{p,q}
=
\{a\in\mathcal{T}_b:\operatorname{sign}(a)=(p,q)\},
\]
and \(G\) acts transitively on each \(\mathcal{T}_{p,q}\).

For \(a\in\mathcal{T}_b\), let
\[
G_a
=
\Aut(V,a)
=
\{g\in G:a(gu,gv)=a(u,v)\}.
\]
Then the orbit of \(a\) is naturally
\[
G\cdot a\simeq G/G_a.
\]

\begin{prop}
	For every \(a\in\mathcal{T}_b\), the homogeneous space \(G/G_a\) is a pseudo-Riemannian symmetric space.
\end{prop}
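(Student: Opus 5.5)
To show that $G/G_a$ is symmetric, I will produce an involutive automorphism $\sigma_a$ of $G=\GL(n,\F)$ whose fixed-point subgroup is exactly $G_a$. Then $(G,G_a)$ is a symmetric pair, and the conclusion follows as in the proof for the group-type models. Because $a$ is nondegenerate, every $g\in G$ has a unique $a$-adjoint $g^{*_a}$, defined by
\[
a(gu,v)=a(u,g^{*_a}v)\qquad\text{for all }u,v\in V.
\]
I define $\sigma_a(g)=(g^{*_a})^{-1}$.

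The key steps are as follows.
\begin{enumerate}
\item Existence and smoothness of the adjoint. This comes from the isomorphism $V\to V^*$ (conjugate-linear or $\HH$-semilinear in the Hermitian cases) induced by $a$.
\item $\sigma_a$ is a group automorphism. This follows from $(gh)^{*_a}=h^{*_a}g^{*_a}$.
\item $\sigma_a$ is an involution, i.e.\ $(g^{*_a})^{*_a}=g$.
\item The fixed-point set is $G_a$. If $g^{*_a}g=1$, then $a(gu,gv)=a(u,v)$, and conversely; so $G^{\sigma_a}=G_a$.
\end{enumerate}

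Step 3 is where the algebraic type of $a$ enters. Write $a(v,u)=\epsilon\,\overline{a(u,v)}$ with $\epsilon=\pm1$, where the bar is trivial over $\R$ and in the complex bilinear cases, complex conjugation in the complex Hermitian and skew-Hermitian cases, and quaternionic conjugation over $\HH$. Then
\[
a(u,g^{*_a}v)=a(gu,v)=\epsilon\,\overline{a(v,gu)}=\epsilon\,\overline{a(g^{*_a}v,u)}=a(u,(g^{*_a})^{*_a}v).
\]
By nondegeneracy, $(g^{*_a})^{*_a}=g$. The sign $\epsilon$ appears twice and cancels, so the identity holds uniformly for all seven types.

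The main point needing care is the quaternionic case, because of the left/right scalar conventions. I would fix $V$ as a right $\HH$-module and take $a$ to be $\HH$-linear in its second argument and conjugate-linear in its first. Then I check that $g^{*_a}$ is again right $\HH$-linear, so that it lies in $G$. The computation is the same as in the other cases, but one must keep track of the order of scalars.

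Finally, a Lie group $G$ with an involution whose fixed group is $H$ gives the symmetric space $G/H$. The pseudo-Riemannian metric comes from the Ad-invariant trace form $\operatorname{Re}\operatorname{tr}(XY)$ on $\mathfrak{gl}(n,\F)$, restricted to the $(-1)$-eigenspace $\mathfrak p$ of $d\sigma_a$. It is nondegenerate on $\mathfrak p$ because $\mathfrak g=\mathfrak h\oplus\mathfrak p$ is an orthogonal decomposition for this form. The trace form itself is nondegenerate on $\mathfrak{gl}(n,\F)$, so this gives a $G$-invariant pseudo-Riemannian metric on $G/G_a$. Since all $a$ in one orbit $G\cdot a$ give conjugate pairs, it also suffices to verify everything for a single standard form in each orbit, if desired.
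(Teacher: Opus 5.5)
Your route is the paper's: the paper uses exactly the involution $\rho_a(g)=(g^{\dagger}_a)^{-1}$, with the adjoint defined by $a(gu,v)=a(u,g^{\dagger}_a v)$. It identifies the fixed group $\{g:g^{\dagger}_a=g^{-1}\}$ with $G_a$ and simply asserts that $\rho_a$ is an involutive automorphism.

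The one step you check beyond the paper, $(g^{*_a})^{*_a}=g$, is miswritten, in two places.

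\begin{itemize}
\item The third equality in your chain, $a(v,gu)=a(g^{*_a}v,u)$ for all $u,v$, is circular. By definition $a(g^{*_a}v,u)=a(v,(g^{*_a})^{*_a}u)$, so this equality is equivalent to the identity you are trying to prove.
\item The last equality is wrong. By the $\epsilon$-symmetry, $\epsilon\,\overline{a(g^{*_a}v,u)}$ equals $a(u,g^{*_a}v)$. If instead you expand via the adjoint of $g^{*_a}$, it equals $a((g^{*_a})^{*_a}u,v)$. It does not equal $a(u,(g^{*_a})^{*_a}v)$. Comparing the two ends of your chain as displayed would only give $g^{*_a}=(g^{*_a})^{*_a}$.
\end{itemize}

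A non-circular version uses the $\epsilon$-symmetry twice:
\[
a(g^{*_a}x,y)=\epsilon\,\overline{a(y,g^{*_a}x)}=\epsilon\,\overline{a(gy,x)}=\epsilon^{2}\,a(x,gy)=a(x,gy).
\]
Here $\overline{\epsilon\,\bar z}=\epsilon z$ holds even over $\HH$, because $\epsilon$ is real. Hence $g$ is the $a$-adjoint of $g^{*_a}$, and uniqueness of adjoints (nondegeneracy) gives $(g^{*_a})^{*_a}=g$.

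With this repair your argument is complete. Your trace-form paragraph also supplies the invariant metric that the paper takes for granted. You should add one line there: $d\sigma_a(X)=-X^{*_a}$ preserves $\operatorname{Re}\operatorname{tr}(XY)$, and that is what makes $\mathfrak h\perp\mathfrak p$.
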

\begin{proof}
	Let \(g^{\dagger}_a\) denote the adjoint of \(g\) with respect to \(a\), defined by
	\[
	a(gu,v)=a(u,g^{\dagger}_a v).
	\]
	Then
	\[
	\rho_a\colon G\longrightarrow G,\qquad
	\rho_a(g)=(g^{\dagger}_a)^{-1},
	\]
	is an involutive automorphism. Its fixed-point subgroup is
	\[
	G^{\rho_a}
	= \{g\in G:g^{\dagger}_a=g^{-1}\}
	= G_a.
	\]
	Thus $(G,G_a)$ is a symmetric pair, so $G/G_a$ is a pseudo-Riemannian symmetric space.
\end{proof}

Combining the preceding results gives the following seven classical series.

\begin{thm}[Classical-form Grassmannians]
	The graph-like fixed locus \(\mathcal{X}_b\simeq\mathcal{T}_b\) is either a single symmetric-space orbit or, in the signature-dependent cases, a disjoint union of symmetric-space orbits. We call these orbits \textbf{classical-form Grassmannians}, as they parameterize  nondegenerate classical forms of a fixed algebraic type on $V$. Explicitly, this construction produces the following seven series:
	\[
	\begin{gathered}
		\frac{\GL(n,\mathbb{R})}{\OO(p,q)},\;
		\frac{\GL(2m,\mathbb{R})}{\SpG(2m,\mathbb{R})},\;
		\frac{\GL(n,\mathbb{C})}{\OO(n,\mathbb{C})},\;
		\frac{\GL(2m,\mathbb{C})}{\SpG(2m,\mathbb{C})},\;
		\frac{\GL(n,\mathbb{C})}{\UU(p,q)},\;
		\frac{\GL(n,\mathbb{H})}{\SpG(p,q)},\;
		\frac{\GL(n,\mathbb{H})}{\Ostar(2n)}.
	\end{gathered}
	\]
\end{thm}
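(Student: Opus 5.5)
The theorem follows by assembling the three preceding propositions and then running through the seven cases (1)--(7) one by one. First, the proposition identifying $\mathcal{X}_b$ with $\mathcal{T}_b$ reduces the geometric statement to one about the space of nondegenerate forms of the same algebraic type as $b$. Next, the congruence action of $G=\GL(n,\F)$ on $\mathcal{T}_b$ splits $\mathcal{T}_b$ into orbits, and each orbit is $G/G_a$ for a representative $a$. The last proposition shows that $G/G_a$ is a symmetric space via the involution $\rho_a(g)=(g^{\dagger}_a)^{-1}$. It then remains to check that $\mathcal{T}_b$ is a single orbit, or a disjoint union of the signature orbits $\mathcal{T}_{p,q}$, and to name $G_a$ in each case.

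For the orbit structure I would invoke the classical normal forms (Sylvester's law of inertia and its Hermitian and quaternionic analogues, Darboux for skew forms, diagonalizability of complex symmetric forms, and the uniqueness of nondegenerate quaternionic skew-Hermitian forms up to isometry).
\begin{itemize}
\item For real skew-symmetric, complex symmetric, complex skew-symmetric, and quaternionic skew-Hermitian $b$, there is one isometry class in each admissible dimension, so $\mathcal{T}_b$ is a single orbit.
\item For real symmetric, complex Hermitian, and quaternionic Hermitian $b$, signature is a complete invariant, which gives the decomposition into the $\mathcal{T}_{p,q}$.
\end{itemize}
Each $\mathcal{T}_{p,q}$ is open in $\mathcal{T}_b$, since signature is locally constant on nondegenerate forms. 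So the decomposition is a genuine topological disjoint union and each stratum is an open subset of $\mathcal{X}_b$.

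Reading off the stabilizers from Table~\ref{tab:gp}, with $G_a=\Aut(V,a)$, gives the seven quotients in order:
\begin{itemize}
\item real symmetric: $\OO(p,q)$;
\item real skew-symmetric: $\SpG(2m,\R)$;
\item complex symmetric: $\OO(n,\C)$;
\item complex skew-symmetric: $\SpG(2m,\C)$;
\item complex Hermitian: $\UU(p,q)$;
\item quaternionic Hermitian: $\SpG(p,q)$;
\item quaternionic skew-Hermitian: $\Ostar(2n)$.
\end{itemize}
For the complex Hermitian case, note that $\mathcal{T}_b$ consists of Hermitian forms only, because self-adjointness with respect to a Hermitian $b$ forces $a_T$ to be Hermitian. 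This explains why skew-Hermitian forms over $\C$ do not appear as a separate series: multiplying by $i$ identifies them with Hermitian forms anyway.

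I expect the main technical point to be the step already used inside the proof of the identification $\mathcal{X}_b\cong\mathcal{T}_b$. One needs that $a_T=b(\cdot,T\cdot)$ has exactly the same type as $b$, and that every such $a$ arises from a unique self-adjoint $T$. This requires care with the semilinear conventions over $\HH$, namely which slot is conjugate-linear and on which side scalars act.

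I would check it by writing $a(u,v)=b(u,Tv)$. Nondegeneracy of $b$ and the Riesz-type representation give a unique $T$ for each $a$. Then the symmetry type of $a$ agrees with that of $b$ if and only if $b(u,Tv)=b(Tu,v)$. Indeed, applying the symmetry of $b$ to $b(Tu,v)$ produces $\pm\overline{b(v,Tu)}=\pm\overline{a(v,u)}$, so the condition $b(Tu,v)=a(u,v)$ is precisely the statement that $a$ has the same sign and conjugation behaviour as $b$.

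Once this is in place, and transitivity on the strata is taken from the normal form theorems, the theorem follows immediately.
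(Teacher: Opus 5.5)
Your proposal is correct and matches the paper's argument: the paper also gets the theorem by combining the identification $\mathcal{X}_b\cong\mathcal{T}_b$ via $T\mapsto b(\cdot,T\cdot)$, the classification of congruence orbits (a single orbit, or the signature strata $\mathcal{T}_{p,q}$), the identification $G\cdot a\simeq G/\Aut(V,a)$, and the involution $\rho_a(g)=(g^{\dagger}_a)^{-1}$. Your explicit check that self-adjointness of $T$ is equivalent to $a_T$ having the same symmetry and conjugation type as $b$, and your remark that the strata are open, only spell out details the paper leaves implicit.
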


\bigskip
\section{Type III: Scalar-structure Grassmannians}\label{sec:F-related}
The family of scalar-structure Grassmannians is obtained from the constructions
$\R^{2n} \to \C^n$ and $\C^{2n} \to \HH^n$, together with 
$\C^n \to \R^n$ and $\HH^n \to \C^n$. In this way we obtain four types of spaces,
namely \emph{complex-structure Grassmannians}, \emph{quaternionic-structure
	Grassmannians}, \emph{real-form Grassmannians}, and \emph{complex-form
	Grassmannians}. Since these four constructions are entirely parallel, we first
present the common framework below and then discuss each type in a separate
subsection.

Start with  a classical space $(V \cong \F^{n}, b)$, set $G=\Aut(V,b)$, $E=V\oplus V$, $B=b\oplus(-b)$.
Denote by $\olG$ the Grassmannian compactification of $G$ and by
$\olGo$ its graph-like locus, as before.

Suppose that $V$ is endowed with a map $\sigma: V\to V$ satisfying the following properties:

\boxed{ \begin{array}{c|c|c|c|c}
		
		\text{type of Grassmannian} & \F  & \text{linearity} & \sigma^2 &  \text{$b$-compatibility} \\  \hline
		
		\text{complex-structure Grassmannian}	&  \R     &	\text{$\R$-linear}		&	-I	&	b(\sigma u,\sigma v)=\pm b(u,v)	
		\\ [1 mm] \hline
		
		\text{quaternionic-structure Grassmannian}  & \C	 & \text{conjugate-linear} 	& -I	& 	 \raisebox{-3pt}{$b(\sigma u, \sigma v)=\overline{b(u,v)}$}
		\\ [1 mm] \hline
		
		\text{real-form Grassmannian}	&	\C	 	 & \text{conjugate-linear}	 	& +I	& 	\raisebox{-3pt}{$b(\sigma u, \sigma v)=\overline{b(u,v)}$}
		\\ [1 mm] \hline
		
		\text{complex-form Grassmannian}  &\HH    & \text{$\HH$-linear}	&	-I  &	b(\sigma u,\sigma v)=\pm b(u,v)	
		
\end{array}}

\bigskip
Define the map $R_{\sigma}:E\to E$ by
\[
R_{\sigma}(x,y)=(\sigma y,\sigma x),\qquad \forall x,y\in V.
\]
A direct computation then yields $R_{\sigma}^{2}=I$ (resp.\ $R_{\sigma}^{2}=-I$)
according as $\sigma^{2}=I$ (resp.\ $\sigma^{2}=-I$).

\begin{lem}\label{R}
	The map $R_\sigma$ induces an involution on $\olG$.
\end{lem}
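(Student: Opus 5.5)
Write $W\mapsto R_\sigma(W)$ for the induced map on subspaces of $E$. The plan has three steps.

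\textbf{Step 1: $R_\sigma$ sends $\F$-subspaces of $E$ to $\F$-subspaces of the same dimension.} I will check this separately for each linearity type in the table.
\begin{enumerate}[label=\textup{(\alph*)}]
	\item If $\sigma$ is $\F$-linear (the complex-structure case over $\R$, the complex-form case over $\HH$), then $R_\sigma$ is an $\F$-linear automorphism of $E$. Indeed $R_\sigma^2=\pm I$, so $R_\sigma$ is invertible and preserves subspaces and their dimensions.
	\item If $\sigma$ is conjugate-linear over $\C$, then $R_\sigma(\lambda(x,y))=\bar\lambda R_\sigma(x,y)$. Hence the image $R_\sigma(W)$ of a complex subspace is still closed under complex scalars, because $\lambda R_\sigma(w)=R_\sigma(\bar\lambda w)$. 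It is a real-linear bijective image, so $\dim_\C$ is preserved.
\end{enumerate}
For $b=0$, this already gives a well-defined map $\Gr_\F(n,E)\to\Gr_\F(n,E)$.

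\textbf{Step 2: $R_\sigma$ preserves maximal $B$-isotropic subspaces when $b\neq0$.} Compute
\[
B\bigl(R_\sigma(x,y),R_\sigma(x',y')\bigr)=b(\sigma y,\sigma y')-b(\sigma x,\sigma x').
\]
The compatibility column of the table evaluates the right-hand side.
\begin{enumerate}[label=\textup{(\alph*)}]
	\item If $b(\sigma u,\sigma v)=\pm b(u,v)$, the right-hand side is $\mp B((x,y),(x',y'))$.
	\item If $b(\sigma u,\sigma v)=\overline{b(u,v)}$, it is $-\overline{B((x,y),(x',y'))}$.
\end{enumerate}
In every case $B|_W=0$ implies $B|_{R_\sigma(W)}=0$. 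Together with Step 1, which gives $\dim R_\sigma(W)=n$, this shows $R_\sigma(\IG(E,B))\subseteq\IG(E,B)$. The sign flip coming from the swap of the two factors is harmless, since only vanishing matters.

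\textbf{Step 3: the induced map is a smooth involution.} Since $R_\sigma^2=\pm I$ and $\pm I$ fixes every subspace, we get $R_\sigma(R_\sigma(W))=W$. Smoothness holds because $R_\sigma$ is a real-linear automorphism of the underlying real space of $E$. Such a map induces a smooth (indeed real-algebraic) map on the real Grassmannian, and the relevant Grassmannians $\Gr_\F(n,E)$ and $\IG(E,B)$ are closed submanifolds of that real Grassmannian, preserved by the map. Its inverse is itself.

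The only delicate point is the conjugate-linear case. There one must make sure that $R_\sigma(W)$ is genuinely complex, which Step 1(b) handles, and that isotropy survives conjugation, which is immediate from Step 2(b).
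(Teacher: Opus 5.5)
Your proof is correct and follows essentially the same route as the paper. The key step is the same computation $B(R_\sigma z,R_\sigma w)=-\eps B(z,w)$ or $-\overline{B(z,w)}$, combined with the fact that $R_\sigma^2=\pm I$ fixes every subspace; your Step 1(b) (a conjugate-linear $R_\sigma$ still carries complex subspaces to complex subspaces of the same dimension) and your smoothness remark fill in details the paper leaves implicit.
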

\begin{proof}
	When $b=0$, $R_\sigma^2=\pm I_E$, and $\pm I_E$ acts trivially on the Grassmannian; hence $R_\sigma$ induces an involution on $\olG=\Gr_{\F}(n,E)$.
	
	When $b\ne 0$, for any $z=(x,y), w=(x',y')\in E$, we have
	
	(1) for $\F =\R, \HH$, $b(\sigma u,\sigma v)=\eps b(u,v)$ with $\eps=\pm 1$,
	\[
	\begin{aligned}
		B(R_\sigma(z),R_\sigma(w))
		&=B((\sigma y,\sigma x),(\sigma y',\sigma x')) \\
		&=b(\sigma y,\sigma y')-b(\sigma x,\sigma x') \\
		&=\eps b(y,y')-\eps b(x,x') \\
		&=- \eps B(z,w);
	\end{aligned}
	\]
	
	(2) for $\F =\C$, $b(\sigma u,\sigma v)=\overline{b(u,v)}$,
	\[
	\begin{aligned}
		B(R_\sigma (z),R_\sigma(w))
		&=B((\sigma y,\sigma x),(\sigma y',\sigma x')) \\
		&=b(\sigma y,\sigma y')-b(\sigma x,\sigma x') \\
		&=\overline{b(y,y')}
		-\overline{b(x,x')}\\
		&=-\overline{B(z,w)}.
	\end{aligned}
	\]
	
	It follows that $R_\sigma$ sends maximal $B$-isotropic subspaces to maximal $B$-isotropic subspaces. Hence $R_{\sigma}$  induces an involution on $\olG=\IG_{\F}(E,\, B)$.
\end{proof}

The description of the full fixed-point locus $\olG^{R_\sigma}$  depends on the scalar structure and will therefore be carried out separately in each of the four subsections below. We first record two results that are common to all four constructions: the identification of the graph-like locus with a moduli space, and the corresponding symmetric-pair description.

We now consider the  graph-like locus of $\olG^{R_\sigma}$, i.e.
\[
\cX:=\olG^{R_\sigma}\cap \olGo=\{ W \in \olGo: R_\sigma(W)=W\}.
\]

\begin{prop}\label{X-T}
	There is a natural diffeomorphism
	\[
	\cX \cong \cT :=\left\{
	A:V \to V | A \text{ has the same linearity type, square, and $b$-compatibility as } \sigma
	\right\}.
	\]
\end{prop}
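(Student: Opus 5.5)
By the first proposition of Section~\ref{sec:group}, every $W\in\olGo$ is $\Graph(g)$ for a unique $g\in G$, and $W\mapsto g$ is a diffeomorphism onto $G$. So the plan is to show that the condition $R_\sigma(W)=W$ on $W=\Graph(g)$ is equivalent to an algebraic relation between $g$ and $\sigma$, and then to convert $g$ into $A$ by composing with $\sigma$.

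\textbf{Step 1: the fixed-point condition.} For $W=\Graph(g)$ we have
\[
R_\sigma(W)=\{(\sigma g u,\sigma u)\mid u\in V\}.
\]
Setting $x=\sigma g u$, this is the set of pairs $(x,\sigma g^{-1}\sigma^{-1}x)$, i.e.\ $R_\sigma(W)=\Graph(\sigma g^{-1}\sigma^{-1})$. Note that $\sigma g^{-1}\sigma^{-1}$ is $\F$-linear even when $\sigma$ is conjugate-linear, since the two conjugations cancel. Uniqueness of graphs then gives
\[
R_\sigma(W)=W \iff g=\sigma g^{-1}\sigma^{-1} \iff (g\sigma)^2=\sigma^2 .
\]

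\textbf{Step 2: the map to $\cT$.} Define $A:=g\sigma$, so $A^2=\sigma^2=\pm I$ by Step 1. Composing with the $\F$-linear map $g$ does not change linearity type, so $A$ has the same linearity type as $\sigma$. For $b$-compatibility, $g\in\Aut(V,b)$ preserves $b$, so
\[
b(Au,Av)=b(g\sigma u,g\sigma v)=b(\sigma u,\sigma v).
\]
Hence $A$ satisfies the same compatibility relation as $\sigma$: the same sign $\eps$ in the $\R,\HH$ cases, or complex conjugation in the $\C$ cases. When $b=0$ this condition is vacuous.

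\textbf{Step 3: the inverse map.} Conversely, given $A\in\cT$, set $g:=A\sigma^{-1}$. This $g$ is $\F$-linear and invertible, and it preserves $b$ because
\[
b(A\sigma^{-1}u,A\sigma^{-1}v)=b(\sigma\sigma^{-1}u,\sigma\sigma^{-1}v)=b(u,v),
\]
using that $A$ and $\sigma$ have the same compatibility (apply it to $\sigma^{-1}u,\sigma^{-1}v$). It also satisfies $(g\sigma)^2=A^2=\sigma^2$. So $g\in G$ and $\Graph(g)\in\cX$ by Step 1. The maps $g\mapsto g\sigma$ and $A\mapsto A\sigma^{-1}$ are mutually inverse and smooth; right composition by the fixed map $\sigma$ is a real-affine operation on real endomorphisms. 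Composing with $\Gamma^{-1}$ then gives the diffeomorphism $\cX\cong\cT$.

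\textbf{Expected obstacle.} The main care is needed in the bookkeeping of the compatibility condition in the sign case $b(\sigma u,\sigma v)=-b(u,v)$, and in the conjugate-linear case, where one must check that $g\sigma$ gives back exactly $\overline{b(u,v)}$. This works because $g$ is an isometry applied after $\sigma$. One should also check that $\cT$ is a smooth submanifold of the real endomorphisms, so that the word \emph{diffeomorphism} makes sense. I would deduce this from the bijection itself, since $\cX$ is a closed submanifold of $\olGo$ as the fixed set of an involution.
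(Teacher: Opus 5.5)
Your proposal is correct and follows essentially the same route as the paper: the fixed-point condition on $\Graph(g)$ reduces to $g\sigma g=\sigma$ (your $(g\sigma)^2=\sigma^2$), and the maps $g\mapsto g\sigma$ and $A\mapsto A\sigma^{-1}$ are mutually inverse, with $b$-compatibility checked exactly as you do. Your smoothness remark is more careful than the paper's. To make it rigorous, use that right composition with $\sigma$ is a linear automorphism of $\mathrm{End}_{\R}(V)$ carrying the closed submanifold $\{g\in G: g\sigma g=\sigma\}$ onto $\cT$; the bijection alone would not suffice.
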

\begin{proof}
	For $W\in\cX$, write $W=\Graph(S)$ with $S\in G$. Then
	\[
	R_\sigma(W)=W \iff S\sigma S=\sigma,
	\]
	so
	\[
	\cX\cong\cT':=\{S\in G:S\sigma S=\sigma\}.
	\]
	The map $S\mapsto A=S\sigma$ sends $\cT'$ to $\cT$. Conversely, if $A\in\cT$, then $S=A\sigma^{-1}$ lies in $G$ because $A$ and $\sigma$ have the same $b$-compatibility, and
	\[
	S\sigma S=A^2\sigma^{-1}=\sigma.
	\]
	These maps are smooth and inverse to one another.
\end{proof}

The group $G$ acts on $\cT$ by conjugation, $g\cdot A=gAg^{-1}$. Its orbits correspond exactly to the isometry classes of the induced classical forms \(b(A\cdot,\cdot)\).
By the standard classification of classical forms \cite{Har,K}, this action is transitive except in the following four signature-dependent cases:
\[
\boxed{
	\begin{array}{c|c}
		\text{family} & G \\
		\hline
		\text{complex-structure, }\eps=1 & \SpG(2n,\R)
		\\ [1 mm] \hline
		\text{quaternionic-structure} & \SpG(2n,\C)
		\\ [1 mm] \hline
		\text{real-form} & \OO(n,\C)
		\\ [1 mm] \hline
		\text{complex-form, }\eps=1 & \Ostar(2n)
	\end{array}
}
\]
In each exceptional case, the signature strata $\cT_{p,q}$ defined in the corresponding subsection are precisely the $G$-orbits.

For the reference structure \(\sigma \in \cT \), its stabilizer is
\[
G_\sigma=\{g\in G:g\sigma=\sigma g\}=Z_G(\sigma).
\]
Hence the $G$-orbit of $\sigma$ (i.e. $\cT$ or $\cT_{p,q}$) is diffeomorphic to $G/G_\sigma$.

\begin{prop}\label{ss}
	$G/G_\sigma$ is a pseudo-Riemannian symmetric space.
\end{prop}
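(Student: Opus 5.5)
The plan is to imitate the proof of the analogous proposition in Section~\ref{sec:b-related}. I will exhibit an involutive automorphism $\rho_\sigma$ of $G$ whose fixed-point subgroup is exactly $G_\sigma=Z_G(\sigma)$. The natural candidate is conjugation by $\sigma$:
\[
\rho_\sigma\colon G\to G,\qquad \rho_\sigma(g)=\sigma g\sigma^{-1}.
\]
Its fixed points are visibly $\{g\in G: \sigma g=g\sigma\}=G_\sigma$. Once $\rho_\sigma$ is shown to be a well-defined involutive automorphism, $(G,G_\sigma)$ is a symmetric pair and $G/G_\sigma$ is a pseudo-Riemannian symmetric space, exactly as in Section~\ref{sec:b-related}.

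The steps, in order, are as follows.

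\textbf{Step 1: linearity.} The map $\rho_\sigma(g)$ must be $\F$-linear on $V$. If $\sigma$ is $\F$-linear (the complex-structure and complex-form cases), this is clear. If $\sigma$ is conjugate-linear (the quaternionic-structure and real-form cases), then $\sigma g\sigma^{-1}$ is a composite of two conjugate-linear maps with a linear one. It is therefore $\C$-linear, and it is invertible.

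\textbf{Step 2: preservation of $b$.} We need $\rho_\sigma(g)\in\Aut(V,b)$ when $b\neq 0$. In the $\R$ or $\HH$ cases, $b(\sigma u,\sigma v)=\eps b(u,v)$ also gives $b(\sigma^{-1}u,\sigma^{-1}v)=\eps b(u,v)$. Hence
\[
b(\sigma g\sigma^{-1}u,\sigma g\sigma^{-1}v)=\eps\, b(g\sigma^{-1}u,g\sigma^{-1}v)=\eps\, b(\sigma^{-1}u,\sigma^{-1}v)=\eps^2 b(u,v)=b(u,v).
\]
In the $\C$ cases, the two conjugations coming from $\sigma$ and $\sigma^{-1}$ cancel in the same way.

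\textbf{Step 3: involutivity.} The map $\rho_\sigma$ is clearly a group homomorphism. Moreover $\rho_\sigma^2(g)=\sigma^2 g\sigma^{-2}=g$, since $\sigma^2=\pm I$ is a real scalar and hence central in $\GL$ of the underlying real space. So $\rho_\sigma$ is an involutive automorphism. It is smooth, since it is the restriction of a linear map on $\mathrm{End}_\R(V)$.

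\textbf{Main obstacle.} The only real subtlety is Step 1 in the conjugate-linear cases, together with the sign bookkeeping in Step 2. One must check that conjugation by $\sigma$ never leaves $G$, even though $\sigma$ itself need not lie in $G$. For example, when $\eps=-1$ or $\sigma$ is antilinear, $\sigma$ is not an element of $G$. Step 2 handles this, since the compatibility factor appears twice and squares to $1$.

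Finally, I would note that $G_\sigma$ is closed and that the orbit $G\cdot\sigma$ (namely $\cT$ or $\cT_{p,q}$) is $G/G_\sigma$. Hence $G_\sigma$ equals the full fixed group $G^{\rho_\sigma}$, and not merely an open subgroup of it, which is more than the definition requires.
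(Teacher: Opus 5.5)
Your proof is correct and follows the paper's argument: both take $\rho_\sigma(g)=\sigma g\sigma^{-1}$, use $\sigma^2=\pm I$ to get involutivity, and read off $G^{\rho_\sigma}=Z_G(\sigma)=G_\sigma$. Your Steps~1--2 add a check the paper leaves implicit, namely that $\rho_\sigma$ maps $G$ into itself: $\sigma g\sigma^{-1}$ is $\F$-linear, and it preserves $b$ because the compatibility factor ($\eps$ or complex conjugation) enters twice and cancels.
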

\begin{proof}
	On the Lie group $G$, we consider 
	\[
	\rho_\sigma: G \to G, \qquad g \mapsto \sigma g\sigma^{-1},
	\]
	Since $\sigma^2=\pm I$, $\rho_\sigma$ is an involutive automorphism. And
	\[
	G^{\rho_\sigma}=\{g\in G:g\sigma=\sigma g\}=G_\sigma,
	\]
	$(G,G_\sigma)$ is a symmetric pair,  $G/G_\sigma$ is a pseudo-Riemannian symmetric space.
\end{proof}

\subsection{Type III.1: Complex-structure Grassmannians}\label{sec:complex-structures}
Let $(V \cong \R^{2n}, b)$ be a classical space as before, i.e.
\[
b=0, ~  ~ b~  \text{ symmetric}, ~ ~  \text{or} ~ ~   b~   \text{skew-symmetric}.
\]
Suppose that \(V\) is equipped with a complex structure, that is, a
$\R$-linear map\footnote{A standard explicit realization of such $\sigma$ is given by $\sigma(u_1, u_2)=(-u_2, u_1)$ for any $(u_1, u_2) \in V$.}
\[
\sigma:V\longrightarrow V,\qquad
\sigma(\lambda v)=\lambda \sigma (v),\qquad
\sigma^2=-I.
\]
When \(b\ne0\), suppose moreover that
\[
b(\sigma u,\sigma v)=\eps b(u,v) \qquad \text{for}  \qquad \eps=\pm 1.
\]
Explicitly, the possibilities for $G$ are as follows\footnote{The explicit matrix realizations for these groups are listed as follows:
	(1) $b=0$; (2) $b(u,v)=u^t \operatorname{diag}(I_{p,q},I_{p,q})v$; (3) $b(u,v)=u^t J_{n,n} v$; (4) $b(u,v)=u^t \operatorname{anti-diag}(I,I) v$; (5) $b(u,v)=u^t \operatorname{anti-diag}(J_{m,m},J_{m,m})v$.}:
\begin{enumerate}[label=\textup{(\arabic*)}]
	\item $G=\GL(2n,\R)$ for $b=0$;
	\item $G=\OO(2p,2q)$ for $b$ symmetric and $\eps=1$, where $p+q=n$;
	\item $G=\SpG(2n,\R)$ for $b$ skew-symmetric and $\eps=1$;
	\item $G=\OO(n,n)$ for $b$ symmetric and $\eps=-1$;
	\item $G=\SpG(4m,\R)$ for $b$ skew-symmetric and $\eps=-1$, where $2m=n$.
\end{enumerate}

\bigskip

By Lemma \ref{R}, we obtain an involution $R_\sigma$ on $\olG$. We now compute the full fixed-point set of $R_\sigma$ on $\olG$, i.e. $\olG^{R_\sigma}$. On $E$, the linear operator $J\triangleq R_\sigma \colon E\to E$, equips $E$ with a complex vector-space structure. Write
\[
\C=\R \oplus \R \mathbf{i}, ~~ \text{with} ~  \mathbf{i}^2=-1.
\] 
For any $z \in E$, we set
\[
\mathbf{i} \cdot z := Jz.
\]
A  subspace $W\subset E$ is a complex subspace if and only if $W$ is $J$-invariant.

When $b\neq 0$, we define the form $\beta\colon E\times E\to\mathbb C$ by
\[
\beta(z,w):=B(z,w)-\mathbf{i}\eps B(Jz,w), ~ \forall z,w \in E.
\]

\begin{lem}
	The map $\beta$ is complex-bilinear when $\eps=1$  and sesquilinear when $\eps=-1$, i.e., for all $\lambda \in\mathbb C$, we have
	\[
	\beta(\lambda \cdot z,\,w)=\lambda \cdot \beta(z,w)  ~ when ~ \eps=1,\qquad 
	\beta(\lambda \cdot z,\,w)=\bar{\lambda} \cdot \beta(z,w)  ~ when ~ \eps=-1,
	\]
	\[
	\beta(z, \lambda \cdot w)=\lambda \cdot \beta(z,w).
	\]
	Moreover, for any $J$-invariant subspace $W\subset E$, one has the equivalence
	\[
	B|_W=0 \iff  \beta|_W=0.
	\]
\end{lem}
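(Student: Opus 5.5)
The plan is to reduce everything to two identities relating $B$ and $J=R_\sigma$, and then to check the scalar rules only on the generator $\mathbf i$, which acts on $E$ as $J$. The first identity is $J^2=R_\sigma^2=-I_E$, which holds because $\sigma^2=-I$. The second comes from case (1) of the computation in the proof of Lemma \ref{R}:
\[
B(Jz,Jw)=-\eps\, B(z,w)\qquad\forall z,w\in E .
\]
From these I will derive a companion identity for moving $J$ across $B$. Replace $z$ by $J^{-1}z=-Jz$ in the displayed identity to get $B(-J^2z,Jw)=-\eps B(-Jz,w)$. Since $-J^2z=z$, this is
\[
B(z,Jw)=\eps\, B(Jz,w).
\]

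Next I verify the two scalar rules for $\lambda=\mathbf i$, using only these identities and $\eps^2=1$. For the second slot,
\[
\beta(z,Jw)=B(z,Jw)-\mathbf i\eps B(Jz,Jw)=\eps B(Jz,w)+\mathbf i B(z,w)=\mathbf i\bigl(B(z,w)-\mathbf i\eps B(Jz,w)\bigr)=\mathbf i\,\beta(z,w).
\]
For the first slot, $J^2=-I$ gives
\[
\beta(Jz,w)=B(Jz,w)-\mathbf i\eps B(J^2z,w)=B(Jz,w)+\mathbf i\eps B(z,w)=\mathbf i\eps\,\beta(z,w).
\]
The last equality holds because $\mathbf i\eps\cdot(-\mathbf i\eps)B(Jz,w)=B(Jz,w)$.

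Since $B$ is real bilinear and $J$ is $\R$-linear, $\beta$ is $\R$-bilinear. Writing $\lambda=a+b\mathbf i$, so that $\lambda\cdot z=az+bJz$, the two rules extend to all $\lambda\in\C$. In the first slot we get $\lambda$ when $\eps=1$ and $\bar\lambda$ when $\eps=-1$. In the second slot we get $\lambda$ in both cases. This establishes the claimed bilinearity or sesquilinearity.

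For the equivalence, note that $\operatorname{Re}\beta=B$ and $\operatorname{Im}\beta(z,w)=-\eps B(Jz,w)$, both of which are real because $B$ is real-valued. So $\beta|_W=0$ immediately gives $B|_W=0$. Conversely, suppose $B|_W=0$ and $W$ is $J$-invariant. For $z,w\in W$ we have $Jz\in W$, so $B(Jz,w)=0$ as well, and hence $\beta(z,w)=0$.

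The only step that needs care is the companion identity $B(z,Jw)=\eps B(Jz,w)$, specifically getting the sign right. It is also the only place where the particular factor $-\mathbf i\eps$ in the definition of $\beta$ matters: it is exactly what makes the second slot $\C$-linear. Everything else is bookkeeping.
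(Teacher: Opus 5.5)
Your proposal is correct and takes essentially the paper's route: derive $B(Jz,Jw)=-\eps B(z,w)$ and $B(z,Jw)=\eps B(Jz,w)$, check both scalar rules for $\lambda=\mathbf i$, and get the equivalence from the $J$-invariance of $W$. You also write out three steps the paper leaves implicit: the sign derivation of the companion identity, the $\R$-linear extension to arbitrary $\lambda\in\C$, and the real/imaginary-part argument for $B|_W=0\iff\beta|_W=0$.
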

\begin{proof}
	For the first statement, it suffices to check for $\lambda=\mathbf{i}$. Note that 	$B(Jz,Jw)=- \eps B(z,w)$ and then $B(z,Jw)=\eps B(Jz,w)$, hence
	\[
	\beta(\mathbf{i} \cdot z,\,w)=B(Jz,w)-\mathbf{i}\eps B(J(Jz),w)=B(Jz,w)+\mathbf{i}\eps B(z,w)=\mathbf{i}\eps \beta(z,w),
	\]
	\[
	\beta(z,\,\mathbf{i} \cdot w)=B(z,Jw)-\mathbf{i}\eps B(Jz, Jw)=\eps B(Jz,w)+\mathbf{i} B(z,w)=\mathbf{i} \beta(z,w).
	\]
	
	The equivalence follows directly from the definition and the $J$-invariance of $W$.
\end{proof}

In fact, direct computation yields the following observations:
\begin{enumerate}[label=\textup{(\arabic*)}]
	\item   $b$  symmetric, $\eps=1$ $\implies$ $\beta$   complex symmetric;
	\item  $b$ skew-symmetric, $\eps=1$ $\implies$  $\beta$   complex skew-symmetric;
	\item  $b$ symmetric, $\eps=-1$ $\implies$ $\beta$   complex Hermitian with signature $(n,n)$;
	\item $b$ skew-symmetric, $\eps=-1$ $\implies$ $\beta$  complex skew-Hermitian with signature $(n,n)$.
\end{enumerate}
Note that all the above $\beta$ are split forms. Accordingly, for each case we obtain the associated Grassmannian
$\IG_{\C}(E,\beta)$, which consists of all maximal  $\beta$-isotropic subspaces of $E$. From the above lemma and discussion, we have

\begin{prop}
	The fixed-point set of $R_\sigma$ on $\olG$ is given by
	\[
	\Gr_{\R}(2n,E)^{R_\sigma}\cong  \Gr_{\C}(n,E), ~~~~ \IG_{\R}(E,B)^{R_\sigma}\cong \IG_{\C}(E,\beta).
	\]
\end{prop}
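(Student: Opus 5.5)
The proof is a direct unwinding of definitions, using the complex structure $J=R_\sigma$ on $E$ introduced above together with the preceding lemma on $\beta$. The action of $R_\sigma$ on $\olG$ is $W\mapsto R_\sigma(W)=J(W)$. Hence a real subspace $W$ is fixed exactly when $J(W)=W$, that is, when $W$ is a complex subspace of $(E,J)$.

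\emph{Case $b=0$.} A real subspace $W\subset E$ with $\dim_\R W=2n$ is $J$-invariant if and only if it is a complex subspace of $E\cong\C^{2n}$, and then $\dim_\C W=n$. Conversely, every complex $n$-dimensional subspace is a real $2n$-dimensional $J$-invariant subspace. This gives a bijection $\Gr_\R(2n,E)^{R_\sigma}\to\Gr_\C(n,E)$. To see that it is a diffeomorphism, I would work in the standard charts. Around a $J$-invariant $W_0$, choose a $J$-invariant complement $W_0'$. Nearby $W$ are graphs of real linear maps $\phi\colon W_0\to W_0'$, and $J$-invariance of $W$ is equivalent to $\phi J=J\phi$. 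So the fixed locus is cut out, chart by chart, by the linear subspace of complex-linear $\phi$. This subspace is exactly a chart of $\Gr_\C(n,E)$. In particular the fixed locus is a closed embedded submanifold identified smoothly with $\Gr_\C(n,E)$.

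\emph{Case $b\neq0$.} A point of $\IG_\R(E,B)$ is a real $2n$-dimensional $B$-isotropic subspace. If it is $R_\sigma$-fixed, it is $J$-invariant, hence a complex $n$-dimensional subspace. By the lemma, $B|_W=0\iff\beta|_W=0$ for $J$-invariant $W$, so $W$ is $\beta$-isotropic. Since $\beta$ is a nondegenerate split form on $E\cong\C^{2n}$, as recorded in the observations (1)--(4), the subspace $W$ is maximal isotropic, i.e.\ $W\in\IG_\C(E,\beta)$. Conversely, any $W\in\IG_\C(E,\beta)$ is $J$-invariant of real dimension $2n$ and has $\beta|_W=0$, hence $B|_W=0$. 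Therefore it lies in $\IG_\R(E,B)$ and is $R_\sigma$-fixed. For smoothness, I would view $\IG_\R(E,B)^{R_\sigma}$ inside $\Gr_\R(2n,E)^{R_\sigma}\cong\Gr_\C(n,E)$. Under this identification it corresponds to the subset $\IG_\C(E,\beta)$, which is a smooth submanifold of $\Gr_\C(n,E)$. This yields the diffeomorphism.

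The main point needing care is nondegeneracy of $\beta$, which guarantees that maximal isotropic complex subspaces have complex dimension $n$. If $\beta(z,\cdot)=0$, then the real part $B(z,\cdot)=0$, so $z=0$ by nondegeneracy of $B$. With this, the dimension count matches on both sides and everything else is formal.
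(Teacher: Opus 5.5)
Your proposal is correct and follows the same route as the paper, which simply notes that $R_\sigma$-fixed subspaces are exactly the $J$-invariant (i.e.\ complex) subspaces and then applies the lemma $B|_W=0\iff\beta|_W=0$ for $J$-invariant $W$. Your chart argument for smoothness and your check that $\beta$ is nondegenerate (its real part is $B$) supply details the paper leaves implicit.
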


\bigskip

Now we consider the   graph-like locus of $\olG^{R_\sigma}$. By Proposition \ref{X-T}, we have
\[
\cX= \{ W \in \olGo: R_{\sigma}(W)=W\} \cong \left\{
A:V\to V \left|
\begin{array}{l}
	A\text{ is $\R$-linear},\\
	A^2=-I,\\
	b(Au,Av)=\eps b(u,v)
\end{array} \right.
\right\}=\cT.
\]
By the preceding $G$-action discussion, the action is transitive except for $G=\SpG(2n,\mathbb R)$, $\eps=1$. Here, $h_A(u,v):=b(Au,v)$ is symmetric, with signature $(2p,2q)$ labels the orbits: $\cT=\bigsqcup_{p+q=n}\cT_{2p,2q}$. By Proposition \ref{ss}, these orbits are pseudo-Riemannian symmetric spaces, and
\begin{thm}[Complex-structure Grassmannians]\label{thm:Complex-structure Grassmannian}
	We call these orbits \textbf{complex-structure Grassmannians}, as they parameterize  complex structures on \(V\) that are compatible with  \(b\). Explicitly, this construction produces the following five series:
	\[
	\frac{\GL(2n,\mathbb{R})}{\GL(n,\mathbb{C})},\;
	\frac{\OO(2p,2q)}{\UU(p,q)},\;
	\frac{\SpG(2n,\mathbb{R})}{\UU(p,q)},\;
	\frac{\OO(n,n)}{\OO(n,\mathbb{C})},\;
	\frac{\SpG(4m,\mathbb{R})}{\SpG(2m,\mathbb{C})}.
	\]
\end{thm}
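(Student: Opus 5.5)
The plan is to combine Proposition \ref{X-T} and Proposition \ref{ss} with an explicit computation of the centralizer $G_\sigma=Z_G(\sigma)$ in each of the five cases (1)--(5). By Proposition \ref{X-T}, $\cX\cong\cT$. By the orbit discussion preceding Proposition \ref{ss}, each $G$-orbit in $\cT$ (all of $\cT$, or a stratum $\cT_{2p,2q}$ when $G=\SpG(2n,\R)$, $\eps=1$) is $G\cdot A\cong G/G_A$. Proposition \ref{ss} then says $G/G_A$ is pseudo-Riemannian symmetric via $\rho_A(g)=AgA^{-1}$; it applies to any $A\in\cT$, not only to $\sigma$. So what remains is to identify $G_A$ with the claimed subgroup.

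For the identification, note that $g$ commutes with $A$ exactly when $g$ is $\C$-linear for the complex structure $\mathbf{i}\cdot v:=Av$ on $V\cong\C^n$. For $b\neq0$, one introduces on $V$ the form $h(u,v)=b(u,v)-\mathbf{i}\eps b(Au,v)$. This is the same construction as $\beta$ on $E$, now done on $V$, and the preceding lemma applies verbatim. An $\R$-linear $g$ preserves $b$ and commutes with $A$ if and only if it is $\C$-linear and preserves $h$, since $b=\operatorname{Re}h$ and $h$ is determined by $b$ and $A$. Hence $G_A=\Aut_{\C}(V,h)$. The type of $h$ is read off from the list after the lemma.
\begin{itemize}
\item[(1)] With $b=0$ we get $\GL(n,\C)$.
\item[(2)] With $b$ symmetric and $\eps=1$, $h$ is complex symmetric. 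This contradicts the target $\UU(p,q)$, so the convention must be reconciled. Checking against the footnote $b=\operatorname{diag}(I_{p,q},I_{p,q})$ with $\sigma(u_1,u_2)=(-u_2,u_1)$ shows that $\sigma$ is a $b$-isometry. Then $b(\sigma u,v)$ is skew, and the natural form is the Hermitian form $b(u,v)+\mathbf{i}b(\sigma u,v)$ of signature $(p,q)$, giving $\UU(p,q)$.
\item[(3)] With $b$ skew and $\eps=1$, $b(A\cdot,\cdot)$ is symmetric of signature $(2p,2q)$, and $h$ is Hermitian of signature $(p,q)$, so $G_A=\UU(p,q)$.
\item[(4)] With $b$ symmetric and $\eps=-1$, $A$ is an anti-isometry, $h$ is complex bilinear symmetric, and $G_A=\OO(n,\C)$.
\item[(5)] With $b$ skew and $\eps=-1$, $h$ is complex skew-symmetric, giving $\SpG(2m,\C)$ with $2m=n$.
\end{itemize}

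The main obstacle is exactly this sign bookkeeping. Whether $h$ is complex-bilinear or sesquilinear depends on whether $A$ is a $b$-isometry or an anti-isometry, and whether $b(A\cdot,\cdot)$ is symmetric or skew depends on both $\eps$ and the symmetry of $b$. One must also match these with the five target groups. I would first verify each case directly in the footnote's explicit matrix models, computing the commutant of $\sigma$ inside $G$ as block matrices $\begin{pmatrix}X&-Y\\Y&X\end{pmatrix}$ and identifying $X+\mathbf{i}Y$ with the stated group. The general orbit points then follow by conjugation, because every $A$ in an orbit is $G$-conjugate to a standard model of its type; this is the classification of forms \cite{Har,K}, already used for transitivity.

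Finally, I would note that $h_A$, symmetric of signature $(2p,2q)$, distinguishes the orbits in case (3). This yields the disjoint union $\cT=\bigsqcup_{p+q=n}\cT_{2p,2q}$ with stratum $\SpG(2n,\R)/\UU(p,q)$. Up to finite coverings and components, this completes the list of five series.
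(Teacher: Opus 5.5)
Your overall route is the paper's: $\cX\cong\cT$ by Proposition~\ref{X-T}, the orbit decomposition (with the strata $\cT_{2p,2q}$ for $\SpG(2n,\R)$, $\eps=1$, labelled by the signature of $b(A\cdot,\cdot)$), and Proposition~\ref{ss} applied at any point of the orbit. The paper simply asserts the five centralizers, so your identification $G_A=\Aut_{\C}(V,h)$ is an addition, and the equivalence ``commutes with $A$ and preserves $b$'' $\Leftrightarrow$ ``$\C$-linear and preserves $h$'' is fine.

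The way you determine the type of $h$, however, is wrong. The lemma on $E$ does not transfer verbatim to $(V,b,A)$. On $E$, Lemma~\ref{R} gives $B(Jz,Jw)=-\eps B(z,w)$, hence $B(z,Jw)=\eps B(Jz,w)$. On $V$, the hypothesis $b(Au,Av)=\eps b(u,v)$ gives $b(u,Av)=-\eps\, b(Au,v)$. The roles of $\eps=\pm1$ are therefore swapped, and the list after the lemma predicts the wrong type in all four cases with $b\neq0$: complex symmetric in (2), complex skew-symmetric in (3), Hermitian in (4), skew-Hermitian in (5). The ``contradiction'' you hit in (2) is exactly this sign flip, not a matter of convention. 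In (3)--(5) you actually wrote the opposite of what your announced recipe yields, so as written those bullets are unjustified.

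The repair is a short direct computation on $V$. For $h=b-\mathbf{i}\eps\, b(A\cdot,\cdot)$ one gets $h(Au,v)=\mathbf{i}\eps\, h(u,v)$ and $h(u,Av)=-\mathbf{i}\, h(u,v)$.
\begin{enumerate}[label=\textup{(\roman*)}]
\item For $\eps=1$, $A$ is a $b$-isometry and $h$ is sesquilinear. If $b$ is symmetric, $h$ is Hermitian with diagonal $b(u,u)$. If $b$ is skew, $h$ is skew-Hermitian, and $\mathbf{i}h$ is Hermitian with diagonal $b(Au,u)$. Either way the real signature $(2p,2q)$ halves to $(p,q)$, and $G_A=\UU(p,q)$.
\item For $\eps=-1$, $A$ is an anti-isometry and $h$ is conjugate-bilinear. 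Then $\overline{h}$ is complex symmetric or skew-symmetric according as $b$ is, giving $\OO(n,\C)$ and $\SpG(2m,\C)$.
\end{enumerate}
This is the rule you state in your ``main obstacle'' paragraph, and it confirms all five of your final groups. Your fallback of computing the commutant in the footnote models and then conjugating also works. So the conclusion stands, but you should replace the appeal to the lemma's list by this direct computation.
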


\subsection{Type III.2: Quaternionic-structure Grassmannians}\label{sec:quaternion-structures}
Let \((V\cong \mathbb C^{2n},b)\) be a classical space as before, i.e. 
\[
b=0,~ 
b\text{ symmetric}, ~ 
b\text{  skew-symmetric}, ~ 
b\text{ Hermitian}, ~ \text{or} ~ 
b\text{ skew-Hermitian}.
\]
Suppose that \(V\) is equipped with a quaternionic structure, that is, a
conjugate-linear map\footnote{A standard explicit realization of such $\sigma$ is given by $\sigma(u_1, u_2)=(-\overline{u_2},\overline{u_1})$ for any $(u_1, u_2) \in V$.}
\[
\sigma:V\longrightarrow V,\qquad
\sigma(\lambda v)=\overline\lambda\,\sigma (v),\qquad
\sigma^2=-I.
\]
When \(b\ne0\), suppose moreover that
\[
b(\sigma u, \sigma v)=\overline{b(u,v)}.
\]
Explicitly, the possibilities for $G$ are as follows\footnote{The explicit matrix realizations for these groups are listed as follows:
	(1) $b=0$; (2) $b(u,v)=u^tv$; (3) $b(u,v)=u^t J_{n,n} v $; (4) $b(u,v)=\bar{u}^t \operatorname{diag}(I_{p,q}, I_{p,q})v$; (5) $b(u,v)=\bar{u}^t J_{n,n} v$.}:
\begin{enumerate}[label=\textup{(\arabic*)}]
	\item $G=\GL(2n,\C)$ for $b=0$;
	\item $G=\OO(2n,\C)$ for $b$ symmetric;
	\item $G=\SpG(2n,\C)$ for $b$ skew-symmetric;
	\item $G=\UU(2p,2q)$ for $b$ Hermitian, where $p+q=n$;
	\item $G=\UU(n,n)$ for $b$ skew-Hermitian.
\end{enumerate}

\bigskip

By Lemma \ref{R}, we obtain an involution $R_\sigma$ on $\olG$. We now compute the full fixed-point set of $R_\sigma$ on $\olG$, i.e. $\olG^{R_\sigma}$. The original complex scalar multiplication on $E$, together with the conjugate-linear operator
$J\triangleq R_\sigma \colon E\to E$, equips $E$ with a right quaternionic vector-space structure. Write
\[
\HH=\C \oplus \C \mathbf{j}, ~~ \text{with} ~  \mathbf{j}^2=-1, ~
\lambda  \mathbf{j}= \mathbf{j} \overline\lambda,  ~ \forall  \lambda \in \C.
\] 
For any $z \in E$, we set
\[
z\cdot \mathbf{j} := Jz.
\]
A complex subspace $W\subset E$ is a right quaternionic subspace if and only if $W$ is $J$-invariant.

When $b\neq 0$, we construct a complex sesquilinear form $C\colon E\times E\to\mathbb C$ satisfying
the key compatibility relation
\[
C(Jz,Jw)=\overline{C(z,w)},\quad \forall z,w\in E.
\]
The construction of $C$ depends on the type of $B$:
\begin{itemize}
	\item If $B$ is complex-bilinear (i.e., $b$ is symmetric or skew-symmetric), define
	\[
	C(z,w):=\mathbf{i}B(Jz,w).
	\]
	\item If $B$ is sesquilinear (i.e., $b$ is Hermitian or skew-Hermitian), define
	\[
	C(z,w):=\mathbf{i}B(z,w).
	\]
\end{itemize}

Now define the form $\beta\colon E\times E\to\mathbb H$ by
\[
\beta(z,w):=C(z,w)-C(z,Jw)\,\mathbf j,\quad \forall z,w\in E.
\]

\begin{lem}
	The map $\beta$ is a right quaternionic sesquilinear form, i.e., for all $q\in\mathbb H$,
	\[
	\beta(z\cdot q,\,w)= \overline{q} \cdot \beta(z,w),\qquad
	\beta(z,\,w\cdot q)=\beta(z,w)\cdot q.
	\]
	Moreover, for any $J$-invariant complex subspace $W\subset E$, one has the equivalence
	\[
	B|_W=0 \iff C|_W=0 \iff \beta|_W=0.
	\]
\end{lem}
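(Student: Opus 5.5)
The plan is to reduce everything to properties of the complex form $C$. First I establish that $C$ is complex sesquilinear and satisfies $C(Jz,Jw)=\overline{C(z,w)}$. Then I check the quaternionic sesquilinearity of $\beta$ on the generators $\lambda\in\C$ and $\mathbf j$ of $\HH$. Finally I read off the isotropy equivalences from the $J$-invariance of $W$. The only input from earlier in the paper is the computation in the proof of Lemma \ref{R}, case (2), namely $B(Jz,Jw)=-\overline{B(z,w)}$, together with the facts that $J$ is conjugate-linear and $J^2=-I$ (since $\sigma^2=-I$).

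\textbf{Step 1: properties of $C$.}

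In the bilinear case, $C(z,w)=\mathbf iB(Jz,w)$.
\begin{itemize}
\item Conjugate-linearity of $J$ gives $C(\lambda z,w)=\overline\lambda\, C(z,w)$, and $C$ is visibly $\C$-linear in $w$.
\item Applying $\overline{B(u,v)}=-B(Ju,Jv)$ with $u=Jz$, $v=w$ gives $\overline{B(Jz,w)}=-B(J^2z,Jw)=B(z,Jw)$. Hence $\overline{C(z,w)}=-\mathbf iB(z,Jw)$.
\item Directly, $C(Jz,Jw)=\mathbf iB(J^2z,Jw)=-\mathbf iB(z,Jw)$, so the two sides agree.
\end{itemize}

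In the sesquilinear case, $C=\mathbf iB$ inherits sesquilinearity from $B$, and $C(Jz,Jw)=\mathbf iB(Jz,Jw)=-\mathbf i\,\overline{B(z,w)}=\overline{\mathbf iB(z,w)}$.

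\textbf{Step 2: sesquilinearity of $\beta$.} Since $\HH=\C\oplus\C\mathbf j$ and both identities are additive in $q$, it suffices to check $q=\lambda\in\C$ and $q=\mathbf j$.

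For the second slot:
\begin{itemize}
\item $\beta(z,w\lambda)=C(z,w)\lambda-C(z,Jw)\overline\lambda\,\mathbf j$, which equals $\beta(z,w)\lambda$ because $\overline\lambda\,\mathbf j=\mathbf j\lambda$.
\item $\beta(z,w\mathbf j)=\beta(z,Jw)=C(z,Jw)+C(z,w)\mathbf j$, using $J^2=-I$.
\item $\beta(z,w)\mathbf j=C(z,w)\mathbf j-C(z,Jw)\mathbf j^2$ gives the same expression.
\end{itemize}

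For the first slot, the complex case is immediate from Step 1. For $q=\mathbf j$, write $w=J(-Jw)$ and use the compatibility relation to get $C(Jz,w)=-\overline{C(z,Jw)}$. Then
\[
\beta(z\mathbf j,w)=-\overline{C(z,Jw)}-\overline{C(z,w)}\,\mathbf j .
\]
Expanding $-\mathbf j\,\beta(z,w)$ with $\mathbf j c=\overline c\,\mathbf j$ and $\mathbf j^2=-1$ gives the same element, so $\beta(z\mathbf j,w)=\overline{\mathbf j}\,\beta(z,w)$.

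\textbf{Step 3: isotropy.} Let $W$ be a $J$-invariant complex subspace.
\begin{itemize}
\item $B|_W=0\iff C|_W=0$: in the sesquilinear case $C=\mathbf iB$. In the bilinear case, $C(z,w)=\mathbf iB(Jz,w)$ and $J$ restricts to a bijection of $W$.
\item $C|_W=0\Rightarrow\beta|_W=0$, because $Jw\in W$.
\item $\beta|_W=0\Rightarrow C|_W=0$, by taking the $\C$-component of $\beta$ in the decomposition $\HH=\C\oplus\C\mathbf j$.
\end{itemize}

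The main obstacle is the bookkeeping of conjugations in Step 2, especially the left-slot identity for $q=\mathbf j$. I handle it by using only the rules $\mathbf j c=\overline c\,\mathbf j$ and $C(Jz,w)=-\overline{C(z,Jw)}$, and expanding both sides fully.
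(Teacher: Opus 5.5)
Your proof is correct and follows the same route as the paper. Both reduce to $q\in\mathbb C$ and $q=\mathbf j$, expand both slots using $C(Jz,w)=-\overline{C(z,Jw)}$, and read the isotropy equivalences off the $J$-invariance of $W$; you also verify $C(Jz,Jw)=\overline{C(z,w)}$ and the sesquilinearity of $C$, which the paper only asserts when defining $C$. One small wording point, shared with the paper's ``real linearity'': reducing to these generators needs the identities to be preserved under products of $q$'s as well as under sums (to reach $q=\lambda\mathbf j$), which is immediate.
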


\begin{proof}
	Recall that $\mathbb H = \mathbb C \oplus \mathbb C\mathbf j$, so any quaternion can be written as
	$q=\lambda_1+\lambda_2\mathbf j$ with $\lambda_1,\lambda_2\in\mathbb C$.
	By real linearity, it suffices to verify the identities
	for $q\in\mathbb C$ and $q=\mathbf j$. We record only the case $q=\mathbf j$; the complex-scalar case is immediate.
	
	Since $C(Jz,Jw)=\overline{C(z,w)}$, $C(Jz,w)=-\overline{C(z,Jw)}$. 
	
	First variable:
	\[
	\beta(z\cdot\mathbf j,w)=\beta(Jz,w)=C(Jz,w)-C(Jz,Jw)\,\mathbf j=-\overline{C(z,Jw)}-\overline{C(z,w)}\,\mathbf j
	\] 
	\[
	\overline{\mathbf j} \cdot \beta(z,w)=\overline{\mathbf j} \cdot(C(z,w)-C(z,Jw)\,\mathbf j)=-\overline{C(z,w)}\,\mathbf j-\overline{C(z,Jw)}=\beta(z\cdot\mathbf j,w)
	\]
	
	Second variable:
	
	\[
	\beta(z,w\cdot\mathbf j)=\beta(z,Jw)=C(z,Jw)+C(z,w)\,\mathbf j
	\]
	\[
	\beta(z,w)\cdot\mathbf j=(C(z,w)-C(z,Jw)\,\mathbf j) \cdot\mathbf j=C(z,w)\,\mathbf j+C(z,Jw)=\beta(z,w\cdot\mathbf j)
	\]
	
	Since $W$ is $J$-invariant, the definitions of $C$ and $\beta$ give the equivalence.
\end{proof}

In fact, direct computation yields the following observations:
\begin{enumerate}[label=\textup{(\arabic*)}]
	\item   $b$  symmetric $\implies$ $\beta$ quaternionic skew-Hermitian;
	\item   $b$  skew-symmetric $\implies$ $\beta$ quaternionic Hermitian with signature $(n,n)$;
	\item  $b$  Hermitian $\implies$ $\beta$ quaternionic skew-Hermitian;
	\item  $b$  skew-Hermitian $\implies$ $\beta$ quaternionic Hermitian with signature $(n,n)$.
\end{enumerate}
Note that all the above $\beta$ are split forms.  From the above lemma and discussion, we have

\begin{prop}
	The fixed-point set of $R_\sigma$ on $\olG$ is given by
	\[
	\Gr_{\C}(2n,E)^{R_\sigma}\cong  \Gr_{\HH}(n,E), ~~~~    \IG_{\mathbb C}(E,B)^{R_\sigma}\cong \IG_{\HH}(E,\beta).
	\]
\end{prop}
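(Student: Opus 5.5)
The argument has two parts, one for each identification. In both, the fixed points of the induced map on subspaces are exactly the subspaces $W$ with $J(W)=W$ (that is, $R_\sigma(W)=W$ as a set).

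\textbf{The case $b=0$.} Here $\olG=\Gr_{\C}(2n,E)$. The lemma preceding the statement shows that $J$ is conjugate-linear with $J^2=-I$, so $E$ becomes a right $\HH$-module via $z\cdot\mathbf j=Jz$. Since $\dim_{\C}E=4n$, we have $E\cong\HH^{2n}$.

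A complex subspace is $J$-invariant exactly when it is closed under right multiplication by $\HH=\C\oplus\C\mathbf j$, i.e. when it is a right quaternionic subspace. Its quaternionic dimension is half its complex dimension. Hence
\[
\Gr_{\C}(2n,E)^{R_\sigma}=\{W\in\Gr_{\C}(2n,E): JW=W\}=\Gr_{\HH}(n,E).
\]
To see that this is an identification of manifolds, note that $\Gr_{\HH}(n,E)$ is the fixed set of a smooth involution, so it is a closed submanifold of $\Gr_{\C}(2n,E)$. The inclusion map is a smooth injective immersion from a compact manifold, and its image is exactly the fixed set.

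\textbf{The case $b\neq0$.} Take $W\in\IG_{\C}(E,B)$ with $JW=W$. Then $W$ is a quaternionic subspace of quaternionic dimension $n$. By the lemma, $B|_W=0\iff\beta|_W=0$, so $W$ is $\beta$-isotropic. We must also check that $\beta$ is nondegenerate on $E$, so that $\IG_{\HH}(E,\beta)$ is defined and its members have quaternionic dimension $n$; this follows from the nondegeneracy of $B$.
\begin{itemize}
\item Nondegeneracy of $C$: if $C(z,\cdot)=0$, then either $B(Jz,\cdot)=0$ or $B(z,\cdot)=0$, so $z=0$ because $J$ is bijective.
\item Nondegeneracy of $\beta$: the complex component of $\beta$ is $C$ itself, so $\beta(z,\cdot)=0$ forces $C(z,\cdot)=0$, hence $z=0$.
\end{itemize}
Conversely, take $W\in\IG_{\HH}(E,\beta)$. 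It is a complex subspace of complex dimension $2n$, it is $J$-invariant, and $\beta|_W=0$ gives $B|_W=0$. So $W\in\IG_{\C}(E,B)$ and $R_\sigma W=W$. The two sets therefore coincide, and the same submanifold argument as in the case $b=0$ upgrades this to a diffeomorphism.

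\textbf{Main obstacle.} The delicate step is the lemma's equivalence $B|_W=0\iff\beta|_W=0$, which the proof above uses in both directions. In the bilinear case $C(z,w)=\mathbf iB(Jz,w)$, so vanishing of $C$ on $W$ requires $JW\subseteq W$. This is the reason the argument restricts to $J$-invariant subspaces.

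I would double-check this equivalence explicitly:
\begin{itemize}
\item If $B|_W=0$, then $C|_W=0$ by $J$-invariance. Hence both $C(z,w)$ and $C(z,Jw)$ vanish for $z,w\in W$, so $\beta|_W=0$.
\item Conversely, the $\C$-component of $\beta$ gives $C|_W=0$. Then $B(Jz,w)=0$ for all $z,w\in W$, and surjectivity of $J$ on $W$ gives $B|_W=0$.
\end{itemize}

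The split-type claims about $\beta$ (listed before the statement) are not needed for the set-theoretic identification. They only ensure that $\IG_{\HH}(E,\beta)$ is nonempty and is the expected homogeneous space.
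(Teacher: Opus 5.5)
Your proposal is correct and takes the same route as the paper. The paper's argument is just the two facts you use: $J$-invariant complex subspaces are exactly the right quaternionic subspaces, and the preceding lemma gives $B|_W=0\iff\beta|_W=0$ for $J$-invariant $W$. Your nondegeneracy check for $\beta$ and the submanifold/diffeomorphism upgrade are harmless additions that the paper leaves implicit.
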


\bigskip

Now we consider the  graph-like locus of $\olG^{R_\sigma}$. By Proposition \ref{X-T}, we have
\[
\cX= \{ W \in \olGo: R_{\sigma}(W)=W\} \cong \left\{
A:V\to V \left|
\begin{array}{l}
	A\text{ is conjugate-linear},\\
	A^2=-I,\\
	b(Au,Av)=
	\overline{b(u,v)}
\end{array} \right.
\right\}=\cT.
\]
By the preceding $G$-action discussion, the action is transitive except for $G=\SpG(2n,\C)$. Here, $h_A(u,v):=b(Au,v)$ is Hermitian, with signature $(2p,2q)$ labels the orbits: $\cT=\bigsqcup_{p+q=n}\cT_{2p,2q}$. By Proposition \ref{ss}, these orbits are pseudo-Riemannian symmetric spaces, and
\begin{thm}[Quaternionic-structure Grassmannians]\label{thm:Quaternionic-structure Grassmannian}
	We call these orbits \textbf{quaternionic-structure Grassmannians}, as they parameterize  quaternionic structures on \(V\) that are compatible with  \(b\). Explicitly, this construction produces the following five series:
	\[
	\frac{\GL(2n,\mathbb{C})}{\GL(n,\mathbb{H})},\;
	\frac{\OO(2n,\mathbb{C})}{\Ostar(2n)},\;
	\frac{\SpG(2n,\mathbb{C})}{\SpG(p,q)},\;
	\frac{\UU(2p,2q)}{\SpG(p,q)},\;
	\frac{\UU(n,n)}{\Ostar(2n)}.
	\]
\end{thm}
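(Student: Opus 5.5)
The theorem is assembled from Proposition \ref{X-T} and Proposition \ref{ss}. The one real task is to identify, in each of the five cases, the centralizer $G_\sigma=Z_G(\sigma)$ of a reference quaternionic structure, and the orbit decomposition of $\cT$.

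Proposition \ref{X-T} identifies $\cX$ with the space $\cT$ of conjugate-linear $A$ satisfying $A^2=-I$ and $b(Au,Av)=\overline{b(u,v)}$. The $G$-action by conjugation is transitive except when $G=\SpG(2n,\C)$. Hence each orbit is $G/G_\sigma$, and by Proposition \ref{ss} it is a pseudo-Riemannian symmetric space. It remains to compute $G_\sigma$ case by case.

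An element $g$ of $\GL(2n,\C)$ commuting with $\sigma$ is exactly a right $\HH$-linear automorphism of $V\cong\HH^n$, where $V$ carries the quaternionic structure $v\cdot\mathbf j:=\sigma v$. So $G_\sigma=G\cap\GL(n,\HH)$. For $b\neq0$ I would transport $b$ to an $\HH$-valued form on $V$, exactly as $\beta$ was built from $C$ on $E$ in the preceding lemma, but now on $V$ itself. For bilinear $b$ take $C_V(u,v)=b(\sigma u,v)$; for sesquilinear $b$ take $C_V=b$. Then set $\beta_V(u,v)=C_V(u,v)-C_V(u,\sigma v)\mathbf j$. By the same computation as in the lemma, $\beta_V$ is right-sesquilinear over $\HH$, and $g\in\GL(n,\HH)$ preserves $b$ if and only if it preserves $\beta_V$.

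The type of $\beta_V$ then determines the group, following the pattern of the list for $\beta$ on $E$:
\begin{enumerate}[label=\textup{(\arabic*)}]
\item $b$ symmetric: $\beta_V$ is skew-Hermitian, so $G_\sigma=\Ostar(2n)$.
\item $b$ skew-symmetric: $\beta_V$ is Hermitian of signature $(p,q)$, where the signature of the Hermitian form $h_A=b(A\cdot,\cdot)$ is $(2p,2q)$. This gives $\SpG(p,q)$ on the orbit $\cT_{2p,2q}$.
\item $b$ Hermitian of signature $(2p,2q)$: $\beta_V$ is Hermitian of signature $(p,q)$, so $G_\sigma=\SpG(p,q)$.
\item $b$ skew-Hermitian: $\beta_V$ is skew-Hermitian, so $G_\sigma=\Ostar(2n)$.
\item $b=0$: $G_\sigma=\GL(n,\HH)$.
\end{enumerate}
The sign conventions, symmetric giving skew-Hermitian and so on, should be double-checked in the footnoted matrix realizations. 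A sanity check is a dimension count. For example, $\dim\OO(2n,\C)-\dim\Ostar(2n)=n(2n-1)-n(2n-1)=\dim_\R\OO(2n,\C)/2$, consistent with a real form of the complex group.

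The main obstacle is the $\SpG(2n,\C)$ case. There I must show that the orbits are exactly the strata $\cT_{2p,2q}$ and that the stabilizer is $\SpG(p,q)$. I would first verify that $h_A(u,v)=b(Au,v)$ is Hermitian, using $b(Au,Av)=\overline{b(u,v)}$, $A^2=-I$ and skew-symmetry of $b$. Next I would check that $h_A$ is $A$-invariant: $h_A(Au,Av)=\overline{h_A(u,v)}$ up to the appropriate sign. This forces every eigenspace decomposition to be $A$-stable, hence even-dimensional over $\C$, so the signature has the form $(2p,2q)$. Conversely, two pairs $(A,b)$ with equal signature are conjugate by $\SpG(2n,\C)$, by the classification of quaternionic Hermitian forms \cite{Har,K} applied to $\beta_V$. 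With this, the five listed series follow.
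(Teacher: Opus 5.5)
Your proposal is correct and follows the paper's route. It combines Proposition \ref{X-T} with Proposition \ref{ss}, and in the $\SpG(2n,\C)$ case labels the orbits by the signature $(2p,2q)$ of $h_A=b(A\cdot,\cdot)$. Your transported form $\beta_V$, whose $\C$-component is exactly $h_\sigma$ (resp.\ $b$), makes explicit the stabilizer identifications and the orbit classification that the paper leaves to the standard classification of classical forms. One trivial slip: the dimension check should read $2n(2n-1)-n(2n-1)=n(2n-1)$.
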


\subsection{Type III.3: Real-form Grassmannians}\label{sec:real-forms}
Let \((V\cong \mathbb C^{n},b)\) be a classical space as before, i.e. 
\[
b=0,~ 
b\text{ symmetric}, ~ 
b\text{  skew-symmetric}, ~ 
b\text{ Hermitian}, ~ \text{or} ~ 
b\text{ skew-Hermitian}.
\]
Suppose that \(V\) is equipped with a real-form structure, that is, a
conjugate-linear map\footnote{A standard explicit realization of such $\sigma$ is given by $\sigma(u)=\overline{u}$ for any $u \in V$.}
\[
\sigma:V\longrightarrow V,\qquad
\sigma(\lambda v)=\overline\lambda\,\sigma(v),\qquad
\sigma^2=I.
\]
When \(b\ne0\), suppose moreover that
\[
b(\sigma u,\sigma v)=\overline{b(u,v)}.
\]
Explicitly, the possibilities for $G$ are as follows\footnote{The explicit matrix realizations for these groups are listed as follows:
	(1) $b=0$; (2) $b(u,v)=u^tv$; (3) $b(u,v)=u^t J_{m,m} v $; (4) $b(u,v)=\bar{u}^t I_{p,q}v$; (5) $b(u,v)=\bar{u}^t J_{m,m} v$.}:
\begin{enumerate}[label=\textup{(\arabic*)}]
	\item $G=\GL(n,\C)$ for $b=0$;
	\item $G=\OO(n,\C)$ for $b$ symmetric;
	\item $G=\SpG(2m,\C)$ for $b$ skew-symmetric, here $2m=n$;
	\item $G=\UU(p, q)$ for $b$ Hermitian, where $p+q=n$;
	\item $G=\UU(m,m)$ for $b$ skew-Hermitian, where $2m=n$.
\end{enumerate}

\bigskip

By Lemma \ref{R}, we obtain an involution $R_\sigma$ on $\olG$. We now compute the full fixed-point set of $R_\sigma$ on $\olG$, i.e. $\olG^{R_\sigma}$.
Set $L:=E^{R_{\sigma}}=\{z \in E: R_{\sigma}(z)=z\}$, i.e.
\[
L=\Graph(\sigma)=\{(v,\sigma(v)):v\in V\}\subset E,
\]
then its opposite graph is
\[
\mathbf i L =\Graph(-\sigma)=\{(v,-\sigma(v)):v\in V\} \subset E,
\]
and  $E=L\oplus \mathbf i L$.  Hence $R_{\sigma}$ is the identity on $L$ and minus the identity on $\mathbf i L$.

\begin{prop}
	The fixed-point set of $R_{\sigma}$ on $\olG$ is given by
	\[
	\Gr_{\C}(n,E)^{R_\sigma}\cong  \Gr_{\R}(n,L), ~~~~     \IG_{\mathbb C}(E,B)^{R_\sigma}\cong \IG_{\R}(L,\beta),
	\]
	where $\beta=-\mathbf{i}B|_L$ is a real-valued form on $L$.
\end{prop}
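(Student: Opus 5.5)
We first treat $R_\sigma$ as a conjugate-linear involution of the complex vector space $E$, which makes $L=E^{R_\sigma}$ a real form. Since $R_\sigma$ is conjugate-linear with $R_\sigma^2=I$, we get $\mathbf i L=\{z: R_\sigma z=-z\}$ and $E=L\oplus\mathbf iL$, i.e.\ $E\cong L\otimes_\R\C$ with $R_\sigma$ the associated complex conjugation; concretely $z=\tfrac12(z+R_\sigma z)+\tfrac12(z-R_\sigma z)$. For a complex subspace $W\subset E$, the standard real-form argument gives the equivalence: $R_\sigma(W)=W$ if and only if $W=(W\cap L)\otimes_\R\C$. Indeed, if $W$ is $R_\sigma$-stable, then for $w\in W$ both $\tfrac12(w\pm R_\sigma w)$ lie in $W$, and the second one lies in $\mathbf iL$, so it equals $\mathbf i\ell$ with $\ell=-\mathbf i\cdot\tfrac12(w-R_\sigma w)\in W\cap L$. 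Hence $W=(W\cap L)\oplus\mathbf i(W\cap L)$, and $\dim_\R(W\cap L)=\dim_\C W$. Conversely, complexifications are clearly stable. Therefore $W\mapsto W\cap L$ and $U\mapsto U\oplus\mathbf iU$ are mutually inverse smooth maps, which gives $\Gr_\C(n,E)^{R_\sigma}\cong\Gr_\R(n,L)$. Smoothness follows in the usual chart description, since $\Gr_\R(n,L)$ embeds as the real points of $\Gr_\C(n,L\otimes\C)$.

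For the case $b\neq0$ we must show that $\beta=-\mathbf iB|_L$ is real-valued and that isotropy corresponds. From the computation in Lemma~\ref{R}(2) we have $B(R_\sigma z,R_\sigma w)=-\overline{B(z,w)}$. For $z,w\in L$ this reads $B(z,w)=-\overline{B(z,w)}$, so $B(z,w)\in\mathbf i\R$ and $\beta$ is real. Moreover $\beta$ is $\R$-bilinear on $L$, and its symmetry type is inherited from $B$: symmetric or skew-symmetric when $B$ is bilinear, and for $B$ sesquilinear the restriction to the real space $L$ is again real bilinear, with the (skew-)Hermitian symmetry becoming (skew-)symmetry after multiplication by $-\mathbf i$. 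Nondegeneracy of $\beta$ follows from that of $B$: if $\beta(\ell,L)=0$, then $B(\ell,L)=0$, hence $B(\ell,E)=0$ by (sesqui)linearity, so $\ell=0$.

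Next, for $W=U\oplus\mathbf iU$ with $U=W\cap L$, we have $B|_W=0$ if and only if $B|_U=0$, again by (sesqui)linearity, and this holds if and only if $\beta|_U=0$. Since $\dim_\R U=n=\tfrac12\dim_\R L$, maximal $B$-isotropic stable subspaces of $E$ correspond exactly to maximal $\beta$-isotropic subspaces of $L$. Restricting the bijection above then gives $\IG_\C(E,B)^{R_\sigma}\cong\IG_\R(L,\beta)$.

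The main obstacle is bookkeeping the type of $\beta$ case by case, in particular checking that it is split so that $\IG_\R(L,\beta)$ is a genuine maximal-isotropic Grassmannian with nonempty Lagrangians. This is automatic, however: $\Graph$ of any element of $G$ that commutes with $\sigma$, such as the identity, gives $\Graph(I)\in\olG^{R_\sigma}$, whose real part is an $n$-dimensional $\beta$-isotropic subspace. Hence $\beta$ is split. We will also make explicit the correspondence $(b,\beta)$: symmetric $\to$ symmetric of signature $(n,n)$, skew $\to$ skew, Hermitian $\to$ skew, skew-Hermitian $\to$ symmetric $(n,n)$. These are read off from $\beta(z,w)=-\mathbf iB(z,w)$ and $B(w,z)=\pm B(z,w)$ or $\pm\overline{B(z,w)}$ on $L$.
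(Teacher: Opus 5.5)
Your proof is correct and follows the paper's argument: decompose an $R_\sigma$-stable complex $W$ as $(W\cap L)\oplus\mathbf i(W\cap L)$, then use $B(z,w)=-\overline{B(z,w)}$ on $L$ to see that $\beta=-\mathbf iB|_L$ is real-valued and that $B|_W=0$ is equivalent to $\beta|_{W\cap L}=0$.

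Your side remark on splitness contains a slip, although the conclusion is unaffected. $\Graph(S)$ is $R_\sigma$-fixed iff $S\sigma S=\sigma$, not whenever $S$ commutes with $\sigma$; a commuting $S$ is fixed only if $S^2=I$, so for example $\Graph(2I)$ is not fixed. Your specific choice $S=I$ is valid, so the argument that $\beta$ is split still goes through.
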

\begin{proof}
	Let $W \subset E$ be a complex $n$-plane with $R_{\sigma}(W)=W$. Then $W=(W\cap L) \oplus (W\cap \mathbf i L)$.
	Set $W_{\R}:=W\cap L$. Then $W\cap \mathbf i L= \mathbf i W_{\R}$, $W=W_{\R} \oplus \mathbf i W_{\R}$.
	Thus $W\mapsto W_{\R}$ gives a bijection from $\Gr_{\C}(n,E)^{R_\sigma}$ to $\Gr_{\R}(n,L)$. Hence
	\[
	\Gr_{\C}(n,E)^{R_\sigma}\cong  \Gr_{\R}(n,L).
	\]

	When $b\neq 0$, for $z,w\in L$, one has $B(z,w)=-\overline{B(z,w)}$. Explicitly,  for any $z=(x, \sigma(x)), w=(y, \sigma(y)) \in L$, we have
	\[
	B((x, \sigma(x)), (y, \sigma(y)))=b(x,y)-b(\sigma(x),\sigma(y))=b(x,y)-\overline{b(x,y)}=2\mathbf i \operatorname{Im}b(x,y) 
	\]
	Thus $\beta:=-\mathbf{i}B|_L$ is a real-valued form on $L$. And we have 
	\[
	B|_W=0 \iff B|_{W_{\R}}=0 \iff \beta|_{W_{\R}}=0.
	\]
	Consequently,
	\[
	\IG_{\mathbb C}(E,B)^{R_\sigma}\cong \IG_{\R}(L,\beta).
	\]
\end{proof}

Note that  direct computation yields the following observations:
\begin{enumerate}[label=\textup{(\arabic*)}]
	\item $b$   symmetric  $\implies$ $\beta$ real  symmetric of signature $(n,n)$;
	\item $b$   skew-symmetric  $\implies$  $\beta$ real skew-symmetric;
	\item $b$   Hermitian  $\implies$  $\beta$ real skew-symmetric;
	\item $b$   skew-Hermitian  $\implies$ $\beta$ real symmetric of signature $(n,n)$.
\end{enumerate}
All these $\beta$ are split. Hence, $\IG_{\R}(L,\beta)$ within the above proposition is significant.

\bigskip

Now we consider the  graph-like locus of $\olG^{R_\sigma}$. By Proposition \ref{X-T}, we have
\[
\cX= \{ W \in \olGo: R_{\sigma}(W)=W\} \cong \left\{
A:V\to V \left|
\begin{array}{l}
	A\text{ is conjugate-linear},\\
	A^2=I,\\
	b(Au,Av)=
	\overline{b(u,v)}
\end{array} \right.
\right\}=\cT.
\]
By the preceding $G$-action discussion, the action is transitive except for $G=\OO(n,\C)$. Here, $h_A(u,v):=b(Au,v)$ is Hermitian, with signature $(p,q)$ labels the orbits: $\cT=\bigsqcup_{p+q=n}\cT_{p,q}$. By Proposition \ref{ss}, these orbits are pseudo-Riemannian symmetric spaces, and
\begin{thm}[Real-form Grassmannians]\label{thm:Real-form Grassmannian}
	We call these orbits \textbf{real-form Grassmannians}, as they parameterize  real-form structures on \(V\)
	that are compatible with \(b\). Explicitly, this construction produces the following five series:
	\[
	\frac{\GL(n,\mathbb{C})}{\GL(n,\mathbb{R})},\;
	\frac{\OO(n,\mathbb{C})}{\OO(p,q)},\;
	\frac{\SpG(2m,\mathbb{C})}{\SpG(2m,\mathbb{R})},\;
	\frac{\UU(p,q)}{\OO(p,q)},\;
	\frac{\UU(m,m)}{\SpG(2m,\mathbb{R})}.
	\]
\end{thm}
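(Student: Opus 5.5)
The proof is a direct assembly of Proposition \ref{X-T}, the orbit discussion, and Proposition \ref{ss}, so the plan is to identify each orbit $G/G_\sigma$ explicitly. By Proposition \ref{X-T}, the graph-like fixed locus $\cX$ is diffeomorphic to the space $\cT$ of conjugate-linear $A$ with $A^2=I$ and $b(Au,Av)=\overline{b(u,v)}$. The group $G$ acts on $\cT$ by conjugation, and the orbit of $\sigma$ is $G/Z_G(\sigma)$, which is symmetric by Proposition \ref{ss}. What remains is to (a) decide when the action is transitive and describe the orbits otherwise, and (b) compute $Z_G(\sigma)$ in each of the five cases.

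For (b), I use that $\sigma$ is a real structure on $V$ with fixed real subspace $V_\R=V^\sigma\cong\R^n$, so $V=V_\R\oplus\mathbf iV_\R$. A complex-linear $g$ commutes with $\sigma$ exactly when it preserves $V_\R$, so $Z_{\GL(n,\C)}(\sigma)=\GL(V_\R)\cong\GL(n,\R)$. When $b\neq0$, the condition $b(\sigma u,\sigma v)=\overline{b(u,v)}$ says $b$ is real-valued on $V_\R$, so $b_\R:=b|_{V_\R}$ is a real form of the same symmetry type as $b$. Moreover $g\in Z_G(\sigma)$ iff $g|_{V_\R}\in\Aut(V_\R,b_\R)$. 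I then check the type of $b_\R$ in each case, using the footnoted matrix models with $\sigma(u)=\overline u$:
\begin{itemize}
\item symmetric $b$ gives $\OO(p,q)$, with the signature coming from the orbit label;
\item skew-symmetric $b$ gives $\SpG(2m,\R)$;
\item Hermitian $b$ becomes real symmetric on $V_\R$, giving $\OO(p,q)$ inside $\UU(p,q)$;
\item skew-Hermitian $b$ becomes real skew-symmetric, giving $\SpG(2m,\R)$ inside $\UU(m,m)$.
\end{itemize}

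For (a), transitivity amounts to the classification of the real form $b_A:=b|_{V^A}$ for $A\in\cT$. Any two such pairs $(V^A,b_A)$ that are isometric over $\R$ give, after complexification, an element of $G$ conjugating one real structure to the other. Real skew-symmetric forms of fixed dimension are unique, so the skew-symmetric and skew-Hermitian cases (and the case $b=0$, where any two real structures are $\GL(n,\C)$-conjugate) are transitive. In the Hermitian case, the signature of $b_A$ must equal the signature $(p,q)$ of $b$, since $V^A\otimes\C=V$ and the Hermitian extension of a real symmetric form has the same signature; so that case is transitive as well. In the symmetric case over $\C$ the signature of $b_A$ is free, giving the strata $\cT_{p,q}$, which one identifies with the signature of the Hermitian form $h_A(u,v)=b(Au,v)$.

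The main obstacle is making this real-form classification argument precise, in particular the invariance of the signature of $h_A$ and the explicit conjugating element. I would handle it uniformly by choosing a $b_A$-adapted basis of $V^A$ and complexifying, which produces an element of $G$ sending $A$ to the standard $\sigma$ or to $\sigma$ twisted by $\operatorname{diag}(I_p,-I_q)$. Combining (a), (b) and Proposition \ref{ss} then yields the five listed series, each realized as the graph-like locus (or a signature stratum) of $\IG_\R(L,\beta)$ or $\Gr_\R(n,L)$.
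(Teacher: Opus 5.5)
Your proposal is correct and takes essentially the paper's route: the theorem is assembled from Proposition~\ref{X-T}, the conjugation-orbit classification (transitive except for $G=\OO(n,\C)$, where the signature of $h_A$ labels the strata $\cT_{p,q}$), and Proposition~\ref{ss} with $G_\sigma=Z_G(\sigma)$. What you add is explicit detail the paper leaves to the standard classification of forms. You compute $Z_G(\sigma)$ through the real form $b|_{V^\sigma}$, and you prove transitivity by complexifying real isometries of $(V^A,b|_{V^A})$; this matches the paper's invariant, because $h_A$ is the (sesqui)linear extension of $b|_{V^A}$.
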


\subsection{Type III.4: Complex-form Grassmannians}\label{sec:complex-forms}
Let $(V \cong \HH^{n}, b)$ be a classical space as before, i.e.
\[
b=0, ~  ~ b~  \text{Hermitian}, ~ ~  \text{or} ~ ~   b~   \text{skew-Hermitian}.
\]
Suppose that \(V\) is equipped with a complex-form structure, that is, a
$\HH$-linear map\footnote{A standard explicit realization of such $\sigma$ is given by left multiplication  $\sigma(u)=\mathbf i \cdot u$ for any $u \in V$.}
\[
\sigma:V\longrightarrow V,\qquad
\sigma(vq)=\sigma(v)q,\qquad
\sigma^2=-I.
\]
When \(b\ne0\), suppose moreover that
\[
b(\sigma u ,\sigma v )=\eps b(u,v) \qquad \text{for}  \qquad \eps=\pm 1.
\]
Explicitly, the possibilities for $G$ are as follows\footnote{The explicit matrix realizations for these groups are listed as follows:
	(1) $b=0$; (2) $b(u,v)=\bar{u}^t I_{p,q}v$; (3) $b(u,v)=\bar{u}^t \mathbf i v$; (4) $b(u,v)=\bar{u}^t \mathbf j J_{m,m}v$; (5) $b(u,v)=\bar{u}^t \mathbf j v$.}:
\begin{enumerate}[label=\textup{(\arabic*)}]
	\item $G=\GL(n,\HH)$ for $b=0$;
	\item $G=\SpG(p,q)$ for  $b$ Hermitian and $\eps=1$, here $p+q=n$;
	\item $G=\Ostar(2n)$ for $b$ skew-Hermitian and $\eps=1$;
	\item $G=\SpG(m,m)$ for $b$ Hermitian and $\eps=-1$, here $2m=n$;
	\item $G=\Ostar(2n)$ for $b$ skew-Hermitian and $\eps=-1$.
\end{enumerate}

\bigskip

By Lemma \ref{R}, we obtain an involution $R_\sigma$ on $\olG$. We now compute the full fixed-point set of $R_\sigma$ on $\olG$, i.e. $\olG^{R_\sigma}$. Set
\[
E^+:=\{z \in E: R_{\sigma}(z)=z \cdot \mathbf{i}\} \subset E,
\]
\[
E^-:=\{z \in E: R_{\sigma}(z)=- z \cdot \mathbf{i}\} \subset E,
\]
then $E^+$ and $E^-$ are complex vector spaces, $E^-=E^+ \cdot \mathbf{j}$ and $E=E^+ \oplus E^-=E^+ \oplus E^+ \cdot \mathbf{j}$.

\begin{prop}
	The fixed-point set of $R_{\sigma}$ on $\olG$ is given by
	\[
	\Gr_{\HH}(n,E)^{R_\sigma}\cong  \Gr_{\C}(n,E^+), ~~~~    \IG_{\mathbb H}(E,B)^{R_\sigma}\cong \IG_{\C}(E^+,\beta),
	\]
	where $\beta$ denotes the complex-valued form on $E^+$ defined by $\beta=\mathbf{j}B|_{E^+}$ for $\eps=1$ and $\beta=B|_{E^+}$ for $\eps=-1$.
\end{prop}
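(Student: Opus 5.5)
The argument follows the real-form case: decompose $E$ into the $\pm\mathbf i$-eigenspaces of the $\HH$-linear operator $R_\sigma$, and reduce $R_\sigma$-invariant quaternionic subspaces to complex subspaces of $E^+$.

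\textbf{Setup.} Since $\sigma$ is $\HH$-linear with $\sigma^2=-I$, the map $R_\sigma$ is right $\HH$-linear on the right $\HH$-space $E$ with $R_\sigma^2=-I$. Regard $E$ as a complex space via right multiplication by $\C=\R\oplus\R\mathbf i\subset\HH$. Then $R_\sigma$ is $\C$-linear and diagonalizable with eigenvalues $\pm\mathbf i$, so $E=E^+\oplus E^-$. If $R_\sigma z=z\mathbf i$, then
\[
R_\sigma(z\mathbf j)=z\mathbf i\mathbf j=-(z\mathbf j)\mathbf i,
\]
hence $E^-=E^+\mathbf j$. In particular $\dim_\C E^+=\dim_\HH E=2n$.

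\textbf{Ordinary Grassmannian.} Let $W\in\Gr_\HH(n,E)$ with $R_\sigma W=W$. Then $R_\sigma|_W$ is still $\C$-linear with square $-I$, so $W=(W\cap E^+)\oplus(W\cap E^-)$. Right multiplication by $\mathbf j$ preserves $W$ and swaps $E^\pm$, so $W\cap E^-=(W\cap E^+)\mathbf j$. Hence $W=W_\C\oplus W_\C\mathbf j=W_\C\cdot\HH$ with $W_\C:=W\cap E^+$ and $\dim_\C W_\C=n$. Conversely, for $U\in\Gr_\C(n,E^+)$ the space $U\HH=U\oplus U\mathbf j$ is an $n$-dimensional quaternionic subspace, and it is $R_\sigma$-invariant. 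The maps $W\mapsto W\cap E^+$ and $U\mapsto U\HH$ are smooth and mutually inverse, which gives the first isomorphism.

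\textbf{Isotropic case.} I first record how $B$ interacts with $R_\sigma$. By the computation in Lemma~\ref{R},
\[
B(R_\sigma z,R_\sigma w)=-\eps B(z,w).
\]
I take $B$ sesquilinear in the convention $B(zp,wq)=\bar p\,B(z,w)\,q$. For $z,w\in E^+$ this gives
\[
B(R_\sigma z,R_\sigma w)=B(z\mathbf i,w\mathbf i)=-\mathbf i\,B(z,w)\,\mathbf i,
\]
so $\mathbf iB(z,w)\mathbf i=\eps B(z,w)$.
\begin{itemize}
\item For $\eps=-1$, $B(z,w)$ commutes with $\mathbf i$, so $B|_{E^+}$ is $\C$-valued.
\item For $\eps=1$, $B(z,w)$ anticommutes with $\mathbf i$, so $B(z,w)\in\C\mathbf j$, and $\beta=\mathbf jB|_{E^+}$ is $\C$-valued.
\end{itemize}
One checks directly that $\beta$ is sesquilinear over $\C$ when $\eps=-1$ and bilinear when $\eps=1$. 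Its nondegeneracy follows from $B(E^+,E^-)$: for $w\in E^+$ one has $B(z,w\mathbf j)=B(z,w)\mathbf j$, so a vector of $E^+$ in the radical of $\beta$ is $B$-orthogonal to all of $E$.

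The key step is $B|_W=0\iff\beta|_{W_\C}=0$ for $W=W_\C\HH$. This holds because $B$ on $W_\C\HH$ is determined by its values on $W_\C$ through sesquilinearity over $\HH$. It follows that $\IG_\HH(E,B)^{R_\sigma}\cong\IG_\C(E^+,\beta)$, where maximality is a dimension count: $\dim_\C W_\C=n$ is half of $\dim_\C E^+=2n$.

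\textbf{Main obstacle.} The delicate part is the sign bookkeeping in the isotropic case. One must make sure the quaternionic conventions (left versus right multiplication, and which argument of $B$ is conjugate-linear) are consistent with the stated normalizations $\beta=\mathbf jB|_{E^+}$ and $\beta=B|_{E^+}$, so that $\beta$ is genuinely $\C$-valued and of the asserted bilinear or sesquilinear type. I would verify this on the explicit models $\sigma(u)=\mathbf iu$ listed in the footnote.
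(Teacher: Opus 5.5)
Your proposal is correct and follows essentially the same route as the paper. Both arguments use the $\pm\mathbf i$-eigenspace splitting $E=E^+\oplus E^+\mathbf j$ and the bijection $W\mapsto W\cap E^+$. Both also combine $B(z\mathbf i,w\mathbf i)=-\mathbf iB(z,w)\mathbf i$ with $B(R_\sigma z,R_\sigma w)=-\eps B(z,w)$ to see that $\beta$ is $\C$-valued. You additionally make explicit several points the paper leaves implicit: the converse map $U\mapsto U\HH$, the nondegeneracy of $\beta$, and the dimension count that gives maximality.
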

\begin{proof}
	Let $W \subset E$ be a quaternionic $n$-plane with $R_{\sigma}(W)=W$. Then $W=(W\cap E^+) \oplus (W\cap E^-)$.
	Set $W^+:=W\cap E^+$. Then $W\cap E^-=(W\cap E^+) \mathbf{j}$, $W=W^+\oplus W^+ \mathbf{j}$.
	Thus $W\mapsto W^+$ gives a bijection from $\Gr_{\HH}(n,E)^{R_\sigma}$ to $\Gr_{\C}(n,E^+)$. Hence
	\[
	\Gr_{\HH}(n,E)^{R_\sigma}\cong  \Gr_{\C}(n,E^+).
	\]

	When $b\neq 0$, for any $z,w \in E^+$, we have 
	\[
	B(R_{\sigma}(z), R_{\sigma}(w))=B(z \mathbf i, w \mathbf i)=- \mathbf{i} B(z,w) \mathbf{i}.
	\]
	
	If  $\eps=1$, we have $	B(R_{\sigma}(z), R_{\sigma}(w))=-B(z,w)$, hence $\beta:=\mathbf{j}B|_{E^+}$ is a complex form on $E^+$.
	
	If  $\eps=-1$, we have $B(R_{\sigma}(z), R_{\sigma}(w))=B(z,w)$, hence $\beta:=B|_{E^+}$ is a complex form on $E^+$.
	
	In either case, we have 
	\[
	B|_W=0 \iff \beta|_{W^+}=0.
	\]
	Consequently,
	\[
	\IG_{\mathbb H}(E,B)^{R_\sigma}\cong \IG_{\C}(E^+,\beta).
	\]
\end{proof}

Note that  direct computation yields the following observations:
\begin{enumerate}[label=\textup{(\arabic*)}]
	\item    $b$   Hermitian,  $\eps=1$  $\implies$ $\beta$ complex  skew-symmetric;
	\item    $b$   skew-Hermitian,  $\eps=1$  $\implies$  $\beta$ complex symmetric;
	\item   $b$   Hermitian,  $\eps=-1$  $\implies$  $\beta$ complex Hermitian of signature $(n,n)$;
	\item   $b$   skew-Hermitian,  $\eps=-1$  $\implies$ $\beta$ complex skew-Hermitian of signature $(n,n)$.
\end{enumerate}
All these $\beta$ are split. Hence, $ \IG_{\C}(E^+,\beta)$ within the above proposition is significant.

\bigskip

Now we consider the  graph-like locus of $\olG^{R_\sigma}$. By Proposition \ref{X-T}, we have
\[
\cX= \{ W \in \olGo: R_{\sigma}(W)=W\} \cong \left\{
A:V\to V \left|
\begin{array}{l}
	A\text{ is $\HH$-linear},\\
	A^2=-I,\\
	b(Au,Av)=\eps b(u,v)
\end{array} \right.
\right\}=\cT.
\]
By the preceding $G$-action discussion, the action is transitive except for $G=\Ostar(2n)$, $\eps=1$. Here, $h_A(u,v):=b(Au,v)$ is Hermitian, with signature $(p,q)$ labels the orbits: $\cT=\bigsqcup_{p+q=n}\cT_{p,q}$. By Proposition \ref{ss}, these orbits are pseudo-Riemannian symmetric spaces, and
\begin{thm}[Complex-form Grassmannians]\label{thm:Complex-form Grassmannian}
	We call these orbits \textbf{complex-form Grassmannians}, as they parameterize  complex-form structures on \(V\)
	that are compatible with \(b\). Explicitly, this construction produces the following five series:
	\[
	\frac{\GL(n,\mathbb{H})}{\GL(n,\mathbb{C})},\;
	\frac{\SpG(p,q)}{\UU(p,q)},\;
	\frac{\Ostar(2n)}{\UU(p,q)},\;
	\frac{\SpG(m,m)}{\SpG(2m,\mathbb{C})},\;
	\frac{\Ostar(2n)}{\OO(n,\mathbb{C})}.
	\]
\end{thm}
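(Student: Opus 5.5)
The plan is to take the general framework as already settled and reduce the theorem to identifying, case by case, the stabilizer $G_\sigma=Z_G(\sigma)$ and the orbit structure. By Proposition \ref{X-T}, the graph-like locus $\cX$ is diffeomorphic to $\cT$, the space of $\HH$-linear $A$ with $A^2=-I$ and $b(Au,Av)=\eps b(u,v)$. By Proposition \ref{ss}, each $G$-orbit $G\cdot\sigma\cong G/G_\sigma$ is a pseudo-Riemannian symmetric space. What remains for each of the five cases is:
\begin{enumerate}
\item identify $Z_G(\sigma)$ with the claimed subgroup;
\item verify that the orbits of $\cT$ are as stated, namely transitive except for $\Ostar(2n)$ with $\eps=1$, where the orbits are the strata $\cT_{p,q}$.
\end{enumerate}

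For the centralizer I will use the standard model $\sigma(u)=\mathbf i\cdot u$ (left multiplication, which is right $\HH$-linear). Then $V=V^+\oplus V^+\mathbf j$, where $V^+=\{u:\sigma u=u\mathbf i\}\cong\C^n$. An $\HH$-linear $g$ commutes with $\sigma$ iff it preserves $V^+$, so $Z_{\GL(n,\HH)}(\sigma)\cong\GL(n,\C)$. This settles case (1).

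For $b\neq0$, I restrict $b$ to $V^+$, exactly as in the proof of the fixed-locus proposition for $E^+$. The recipe is $\beta=\mathbf j b|_{V^+}$ when $\eps=1$ and $\beta=b|_{V^+}$ when $\eps=-1$. Then $Z_G(\sigma)=\Aut(V^+,\beta)$, because $b$ is recovered from $\beta$ by $\HH$-linearity. The list of types proved earlier (Hermitian/$\eps=1$ gives skew-symmetric, and so on) was stated for $E^+$; I will repeat the same computation on $V^+$. The difference is that $b$ need not be split, so the signature is carried along. This gives:
\begin{enumerate}
\item $\SpG(p,q)$ with $\eps=1$: $\beta$ is Hermitian of signature $(p,q)$, so the stabilizer is $\UU(p,q)$. (On $V^+$ the $\mathbf j$-component of $b$ vanishes; I will double-check whether $\beta$ should be the $\C$-part instead.)
\item $\Ostar(2n)$ with $\eps=1$: the stabilizer is $\UU(p,q)$.
\item $\SpG(m,m)$ with $\eps=-1$: $\beta$ is complex skew-symmetric, giving $\SpG(2m,\C)$.
\item $\Ostar(2n)$ with $\eps=-1$: $\beta$ is complex symmetric, giving $\OO(n,\C)$.
\end{enumerate}
The correct pairing of which recipe gives which type is the step I must verify carefully, by an explicit computation in the footnote matrix models (for example $b(u,v)=\bar u^t\mathbf i v$ with $\sigma=\mathbf i\cdot$). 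I would first expand $b$ on $V^+$ into its $\C$ and $\C\mathbf j$ components.

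For the orbit statement I will use the invariant $h_A(u,v)=b(Au,v)$. Compatibility makes $h_A$ a nondegenerate form of a fixed type, and $A$ can be recovered from $h_A$ by $A=b^{-1}h_A$. Two elements of $\cT$ are $G$-conjugate iff the pairs $(b,h_A)$ are simultaneously isometric. I then invoke the classification of classical forms \cite{Har,K} on $V^+$:
\begin{enumerate}
\item When $\beta$ is complex bilinear (symmetric or skew), or when $G=\SpG(p,q)$ with $\eps=1$, there is a single class, so the action is transitive. In the $\SpG(p,q)$ case the signature of $\beta$ is forced to be $(p,q)$ by that of $b$.
\item When $G=\Ostar(2n)$ with $\eps=1$, the form $h_A$ is quaternionic Hermitian, and its signature $(p,q)$ is a complete invariant. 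This gives $\cT=\bigsqcup\cT_{p,q}$ with stabilizer $\UU(p,q)$.
\end{enumerate}
The main obstacle is the bookkeeping of types and signatures in the stabilizer identification. Everything else follows from Propositions \ref{X-T} and \ref{ss}.
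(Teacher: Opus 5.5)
Your overall route is the paper's: Proposition \ref{X-T} and Proposition \ref{ss}, with the orbit structure taken from the classification of classical forms. The paper simply asserts the five centralizers, so your explicit computation on $V^+$ is a genuine addition. However, the recipe you import for it is backwards, so the step ``repeat the $E^+$ computation on $V^+$'' fails as written.

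On $E$, the involution $R_\sigma$ swaps the two summands of $B=b\oplus(-b)$, which gives $B(R_\sigma z,R_\sigma w)=-\eps B(z,w)$ (Lemma \ref{R}). On $V$ there is no swap. For $z,w\in V^+$ you get $\eps\, b(z,w)=b(z\mathbf i,w\mathbf i)=-\mathbf i\, b(z,w)\,\mathbf i$. Since $-\mathbf i x\mathbf i=x$ exactly when $x\in\C$, and $-\mathbf i x\mathbf i=-x$ exactly when $x\in\C\mathbf j$, the restriction $b|_{V^+}$ is $\C$-valued for $\eps=1$ and $\C\mathbf j$-valued for $\eps=-1$. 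So your $\mathbf j\, b|_{V^+}$ for $\eps=1$ lands in $\C\mathbf j$, and your $b|_{V^+}$ for $\eps=-1$ is not complex-valued; neither is a complex form. The correct choice is $\beta=b|_{V^+}$ for $\eps=1$ and $\beta=\mathbf j\, b|_{V^+}$ for $\eps=-1$. Equivalently, the paper's $E^+$ list holds on $V^+$ with $\eps$ replaced by $-\eps$, and the signature is no longer forced to be split. Your parenthetical doubt in the $\SpG(p,q)$ case is exactly this issue, and it affects all four cases.

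With the swap, the stabilizers you listed all come out right.
\begin{itemize}
\item $\eps=1$: if $b$ is Hermitian, $b|_{V^+}$ is complex Hermitian of the signature $(p,q)$ of $b$. If $b$ is skew-Hermitian, it is complex skew-Hermitian of arbitrary signature. Both give $\UU(p,q)$.
\item $\eps=-1$: write $b(z,w)=\gamma(z,w)\mathbf j$. Then $\mathbf j\, b|_{V^+}=-\bar\gamma$ is complex bilinear, and $\overline{\gamma\mathbf j}=-\gamma\mathbf j$ makes it skew-symmetric for Hermitian $b$ (forcing $n=2m$ and $Z_G(\sigma)\cong\SpG(2m,\C)$). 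It is symmetric for skew-Hermitian $b$, giving $Z_G(\sigma)\cong\OO(n,\C)$.
\end{itemize}

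For the orbit count, route the argument explicitly through $V^+_A=\{u: Au=u\mathbf i\}$ rather than through $h_A$ alone. Simultaneous isometry of $(b,h_A)$ is not decided by $h_A$ by itself. What works is this: $A,A'\in\cT$ are $G$-conjugate if and only if $(V^+_A,\beta_A)\cong(V^+_{A'},\beta_{A'})$. In the $\Ostar(2n)$, $\eps=1$ case, $h_A|_{V^+_A}=-\mathbf i\, b|_{V^+_A}$ shows that the signature of $h_A$ is exactly the signature of $\beta_A$, hence a complete invariant of the $G$-orbit.
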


\bigskip
\section{Type IV: Subspace-configuration Grassmannians}\label{sec:subspace}
We treat three families of Grassmannians parametrizing subspace configurations in this section. 

\subsection{Type IV.1: Isotropic-pair Grassmannians}\label{sec:b-isotropic}
Let $(V \cong \F^{2n},b)$ be a classical space with $b$ a split form.  Thus $V$ admits a decomposition
\[
V=V_+\oplus V_-,
\]
where $V_+$ and $V_-$ are maximal $b$-isotropic subspaces of $(V,b)$.  Define
\[
s:V\to V,
\qquad s(v_++v_-)=v_+-v_-,\qquad v_{\pm}\in V_{\pm}.
\]
Since $b|_{V_+}=b|_{V_-}=0$, the map $s$ is a $b$-anti-isometry:
\begin{equation}\label{eq:s-anti-isometry}
	b(su,sv)=-b(u,v),\qquad u,v\in V.
\end{equation}
Put $E=V\oplus V$ and $B=b\oplus(-b)$.  The graph
\[
L=\Graph(s)=\{(v,sv):v\in V\}\subset E
\]
is a nondegenerate subspace of $(E,B)$.  Its natural opposite graph is
\[
L^- =\Graph(-s)=\{(v,-sv):v\in V\} \subset E.
\]
Then $E=L\oplus L^-$.  We define the reflection along $L$ to be the involution
\[
R_L:E\to E,
\qquad R_L(x,y)=(sy,sx).
\]
Equivalently, $R_L$ is the identity on $L$ and minus the identity on $L^-$. 

Denote $G=Aut(V,b)$, $\olG=\IG(E,B)$ as before, then
\begin{lem}\label{lem:reflection-orthogonal}
	The map $R_L$ induces an involution on $\olG$.
\end{lem}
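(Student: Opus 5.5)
We verify directly that $R_L$ is a linear involution of $E$ that preserves $B$ up to sign. Such a map sends maximal $B$-isotropic subspaces to maximal $B$-isotropic subspaces, so it induces a map on $\olG=\IG(E,B)$. We then check that the induced map is a smooth involution. This parallels the proof of Lemma \ref{R}, with $\sigma$ replaced by the anti-isometry $s$.

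\textbf{Step 1: $R_L^2=I_E$.} Since $s^2=I$ (it is $\pm1$ on $V_\pm$), we get
\[
R_L^2(x,y)=R_L(sy,sx)=(s^2x,s^2y)=(x,y).
\]
In particular $R_L$ is a linear automorphism of $E$. It is linear over $\F$, because $V_\pm$ are $\F$-subspaces and hence $s$ is $\F$-linear.

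\textbf{Step 2: behaviour of $B$.} For $z=(x,y)$ and $w=(x',y')$, we use \eqref{eq:s-anti-isometry}:
\[
B(R_Lz,R_Lw)=b(sy,sy')-b(sx,sx')=-b(y,y')+b(x,x')=B(z,w).
\]
So $R_L$ is in fact a $B$-isometry. Consequently, for any $W\in\olG$ we have $B|_{R_L(W)}=0$. Since $\dim R_L(W)=\dim W$, the image $R_L(W)$ is again maximal $B$-isotropic.

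\textbf{Step 3: smoothness and the involution property.} The map $W\mapsto R_L(W)$ is the restriction to $\olG$ of the action of the linear map $R_L$ on the ambient Grassmannian $\Gr_{\F}(2n,E)$, so it is smooth. By Step 1 it squares to the identity, so it is a smooth involution of $\olG$.

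The only point needing care is the sign bookkeeping in Step 2. Here $-b\oplus b$ arises from swapping the two factors, and this is exactly compensated by the anti-isometry sign of $s$. For sesquilinear $b$ the same computation holds verbatim, because $s$ is $\F$-linear and no conjugation enters. We expect no real obstacle beyond this check.
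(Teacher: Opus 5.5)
Your proof is correct and follows the paper's argument. The key step is the same computation $B(R_Lz,R_Lw)=B(z,w)$, which uses the anti-isometry property of $s$ to show that $R_L$ is a $B$-isometry and hence maps $\IG(E,B)$ to itself; your explicit checks that $R_L^2=I$ and that the induced map is smooth fill in points the paper leaves implicit.
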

\begin{proof}
	Using \eqref{eq:s-anti-isometry}, for $(x,y),(x',y')\in E$ we have
	\[
	\begin{aligned}
		B(R_L(x,y),R_L(x',y'))
		&=B((sy,sx),(sy',sx')) \\
		&=b(sy,sy')-b(sx,sx') \\
		&=-b(y,y')+b(x,x') \\
		&=B((x,y),(x',y')).
	\end{aligned}
	\]
	It follows that $R_L$ sends maximal $B$-isotropic subspaces to maximal $B$-isotropic subspaces. 
\end{proof}

\begin{prop}\label{prop:full-fixed-locus}
	The fixed-point set of $R_L$ on $\olG=\IG(E,B)$ is
	\[
	\IG(E,B)^{R_L}
	\cong
	\IG(L,B|_L)\times \IG(L^-,B|_{L^-}).
	\]
	Under the identifications $L\cong V$ and $L^-\cong V$, this becomes
	\[
	\IG(E,B)^{R_L}\cong \IG(V,b)\times \IG(V,b).
	\]
\end{prop}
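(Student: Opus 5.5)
The argument follows the pattern of the real-form case, where $E=L\oplus\mathbf iL$ gave the splitting. First I record that $L$ and $L^-$ are $B$-orthogonal and each is nondegenerate, with $B|_L$ and $B|_{L^-}$ isometric to $2b$ and hence split. For $z=(u,su)\in L$ and $w=(v,-sv)\in L^-$, \eqref{eq:s-anti-isometry} gives
\[
B(z,w)=b(u,v)-b(su,-sv)=b(u,v)-b(u,v)=0 .
\]
Similarly,
\[
B((u,su),(v,sv))=b(u,v)-b(su,sv)=2b(u,v),
\]
and likewise $B|_{L^-}\cong 2b$. Thus $E=L\oplus L^-$ is a $B$-orthogonal decomposition, and the projections $(v,\pm sv)\mapsto v$ identify $(L,B|_L)$ and $(L^-,B|_{L^-})$ with $(V,2b)$. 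Since $2b$ and $b$ have the same isotropic subspaces, $\IG(L,B|_L)\cong\IG(V,b)$, and the same holds for $L^-$. This settles the second statement once the first is proved.

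For the first statement, let $W\in\IG(E,B)$ with $R_L(W)=W$. Since $R_L$ is a linear involution of $W$ with eigenspaces $L$ and $L^-$, we get
\[
W=(W\cap L)\oplus(W\cap L^-).
\]
Concretely, $w\mapsto \tfrac12(w\pm R_Lw)$ are the projections, and they preserve $W$. Set $W_+=W\cap L$ and $W_-=W\cap L^-$. Both are isotropic, because $W$ is. An isotropic subspace of the nondegenerate split space $(L,B|_L)\cong(V,2b)$ has dimension at most $n$, and likewise in $L^-$. Since $\dim W_++\dim W_-=\dim W=2n$, both have dimension exactly $n$, so $W_\pm\in\IG(L^{\pm},B|_{L^\pm})$. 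Conversely, given maximal isotropic $W_+\subset L$ and $W_-\subset L^-$, the sum $W_+\oplus W_-$ is $R_L$-invariant, has dimension $2n$, and is $B$-isotropic by the orthogonality of $L$ and $L^-$. It is therefore a point of $\IG(E,B)^{R_L}$.

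The two constructions $W\mapsto(W\cap L,W\cap L^-)$ and $(W_+,W_-)\mapsto W_+\oplus W_-$ are mutually inverse. The map $(W_+,W_-)\mapsto W_+\oplus W_-$ is smooth, and it is a closed embedding because it is the restriction of the standard embedding $\Gr(n,L)\times\Gr(n,L^-)\to\Gr(2n,E)$. Its image is exactly the fixed locus, so it is a diffeomorphism onto $\IG(E,B)^{R_L}$.

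The main obstacle is the dimension count forcing $\dim W_\pm=n$. This relies on the bound that isotropic subspaces of a nondegenerate form on $\F^{2n}$ have dimension at most $n$, which holds for all classical types because $W_+\subset W_+^{\perp}$ and $\dim W_+^\perp=2n-\dim W_+$. In the Hermitian and skew-Hermitian cases over $\C$ and $\HH$, I also need to check that the eigenspace projections are $\F$-linear. This holds because $s$ is $\F$-linear, so $R_L$ is $\F$-linear, and hence $W_\pm$ are $\F$-subspaces.
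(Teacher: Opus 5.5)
Your proposal is correct and takes essentially the same route as the paper. You split an $R_L$-fixed $W$ into $W\cap L$ and $W\cap L^-$, use the $B$-orthogonality of $L$ and $L^-$ together with $B|_{L^{\pm}}\cong 2b$ to reduce isotropy to each summand, and get $\dim(W\cap L^{\pm})=n$ from the dimension count. Beyond the paper, you explicitly verify the orthogonality and the factor $2$, and you justify smoothness through the embedding $\Gr(n,L)\times\Gr(n,L^-)\to\Gr(2n,E)$; the paper only asserts the former and does not address smoothness.
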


\begin{proof}
	Let $W\in \IG(E,B)$ be fixed by $R_L$.  Since $R_L$ is diagonalizable with $+1$-eigenspace $L$ and $-1$-eigenspace $L^-$, the $R_L$-invariant subspace $W$ decomposes as
	\[
	W=(W\cap L)\oplus(W\cap L^-).
	\]
	Write $A=W\cap L$ and $C=W\cap L^-$.  The summands $L$ and $L^-$ are $B$-orthogonal, and $B|_L$ and $B|_{L^-}$ are both isometric to a non-zero scalar multiple of $b$.  Therefore $W$ is $B$-isotropic if and only if $A$ is isotropic in $L$ and $C$ is isotropic in $L^-$.  Since $W$ is maximal $B$-isotropic of dimension $2n$ and the maximal isotropic dimension in each of $L$ and $L^-$ is $n$, we must have
	\[
	\dim_{\F} A=\dim_{\F} C=n.
	\]
	Thus $A$ and $C$ are maximal isotropic subspaces of $L$ and $L^-$, respectively.
	
	Conversely, if $A\subset L$ and $C\subset L^-$ are maximal isotropic, then $A\oplus C$ is a maximal $B$-isotropic subspace of $E$ and is fixed by $R_L$.  This proves the asserted product description.

	Finally, the maps $v\mapsto(v,sv)$ and $v\mapsto(v,-sv)$ identify $(V,b)$ with $(L,\frac12 B|_L)$ and $(L^-,\frac12 B|_{L^-})$, respectively.
\end{proof}

Now we consider the   graph-like locus of $\olG^{R_L}$:
\begin{equation}
	\cX:=\olG^{R_L} \cap \olGo.
\end{equation}
Also define
\begin{equation}\label{eq:Tb-def}
	\cT=\bigl\{(A,C)\in \IG(V,b)\times\IG(V,b):A\oplus C=V\bigr\},
\end{equation}
i.e. the space of ordered transverse maximal $b$-isotropic pairs in $V$.

\begin{prop}\label{prop:Xb-Tb}
	There is a natural bijective smooth identification
	\[
	\cX\cong \cT.
	\]
\end{prop}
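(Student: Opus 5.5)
By the first proposition of Section~\ref{sec:group}, every $W\in\olGo$ is $\Graph(S)$ for a unique $S\in G=\Aut(V,b)$. So I will first compute what $R_L(W)=W$ means for $S$, and then turn the resulting operator into a pair of subspaces. Applying $R_L$ gives
\[
R_L(\Graph(S))=\{(sSu,su):u\in V\}.
\]
Setting $v=sSu$, this is $\{(v,\,sS^{-1}s^{-1}v)\}=\Graph(sS^{-1}s)$, using $s^{-1}=s$. Hence the fixed-point condition reads $S=sS^{-1}s$, i.e.
\[
(sS)^2=I.
\]
This yields an identification
\[
\cX\cong\cT':=\{S\in G:(sS)^2=I\}.
\]

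\textbf{From operators to pairs.} For $S\in\cT'$, put $P:=sS$. Then $P^2=I$, and $P$ is a $b$-anti-isometry, since $S$ is an isometry and $s$ is an anti-isometry by \eqref{eq:s-anti-isometry}. The involution $P$ splits $V=A\oplus C$ into its $+1$ and $-1$ eigenspaces. For $u,v\in A$ we get
\[
b(u,v)=b(Pu,Pv)=-b(u,v),
\]
so $A$ is isotropic. The same argument shows $C$ is isotropic. Since isotropic subspaces have dimension at most $n$ and $\dim A+\dim C=2n$, both are maximal isotropic. Thus $(A,C)\in\cT$.

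\textbf{Inverse map.} Given $(A,C)\in\cT$, let $P_{A,C}$ be the operator that is $+1$ on $A$ and $-1$ on $C$. Because $A$ and $C$ are isotropic, $P_{A,C}$ is an anti-isometry; this is exactly the same computation that gave \eqref{eq:s-anti-isometry} for $s$. Set $S:=sP_{A,C}$. Then $S$ is a product of two anti-isometries, hence an isometry, so $S\in G$, and $(sS)^2=P_{A,C}^2=I$. The two maps are visibly inverse to each other.

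\textbf{Smoothness, the main point to check.} The map $W\mapsto S\mapsto (A,C)$ is smooth. Indeed, $S$ depends smoothly on $W$ by the first proposition, and $A=\operatorname{im}\tfrac12(I+P)$, $C=\operatorname{im}\tfrac12(I-P)$ are images of projections whose ranks are constant (both equal to $n$). Such images vary smoothly in the Grassmannian. In the other direction, $P_{A,C}$ is a smooth function of the transverse pair $(A,C)$ on the open subset $\cT$ of $\IG(V,b)\times\IG(V,b)$. Locally it can be written as $M\operatorname{diag}(I,-I)M^{-1}$, where $M$ is built from local frames of $A$ and $C$. So the inverse is smooth as well, which is the claimed bijective smooth identification.
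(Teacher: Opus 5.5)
Your proof is correct, and it produces the same bijection as the paper by a different route.

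\textbf{The paper's route.} The paper works on the compactification side. It invokes Proposition~\ref{prop:full-fixed-locus}, so every fixed $W$ is already of the form $A\oplus C$ with $A\subset L$ and $C\subset L^-$ maximal isotropic. Writing $W=\{(a+c,\,sa-sc)\}$, it then only checks that the two transversality conditions defining $\olGo$ are equivalent to $A_0\cap C_0=0$.

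\textbf{Your route.} You work on the group side, as in Proposition~\ref{X-T}. The fixed-point equation becomes $(sS)^2=I$, and the pair is read off as the $\pm1$-eigenspaces of the anti-isometric involution $P=sS$. The two maps agree: $\{(a+c,\,s(a-c))\}$ is exactly $\Graph(sP_{A_0,C_0})$, so $sS=P_{A_0,C_0}$.

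\textbf{What the paper's route buys.} Given the full fixed-locus description, it is shorter. It also shows that $\cX\cong\cT$ is the restriction of the global identification $\olG^{R_L}\cong\IG(V,b)\times\IG(V,b)$, with the graph-like condition becoming transversality. This is what exhibits $\olG^{R_L}$ as a compactification of $G/\GL(n,\F)$.

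\textbf{What your route buys.} It is self-contained and never uses Proposition~\ref{prop:full-fixed-locus}. It gives explicit mutually inverse formulas ($P=sS$, $S=sP_{A,C}$). It verifies smoothness in both directions, so you actually obtain a diffeomorphism, whereas the paper leaves smoothness implicit. It also shows that the isotropic-pair family fits the same template as the scalar-structure families. There $\cT$ is identified with the set of involutions $P$ satisfying $b(Pu,Pv)=-b(u,v)$, in place of the compatible scalar structures $A$.

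\textbf{What your route does not give.} On its own, it does not show how $\cX$ sits inside the full fixed locus $\olG^{R_L}$.
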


\begin{proof}
	By Proposition \ref{prop:full-fixed-locus}, any $W \in \cX$ has the form
	\[
	W=A\oplus C,
	\]
	where $A\subset L$ and $C\subset L^-$ are maximal isotropic.  Now identify $(L,\frac12 B|_L)$ and $(L^-,\frac12 B|_{L^-})$ with $(V,b)$ as before, then $A$ and $C$ correspond to maximal $b$-isotropic subspaces $A_0, C_0 \subset V$. Thus
	\[
	W=\{(a+c,sa-sc):a\in A_0,\ c\in C_0\}.
	\]
	
	By direct computation, we have
	\[
	W \cap(V\oplus \{0\})=\{0\}
	\Longleftrightarrow W \cap(\{0\}\oplus V)=\{0\}\Longleftrightarrow
	A_0\cap C_0=\{0\} \Longleftrightarrow A_0\oplus C_0=V.
	\]
	This proves the claim.
\end{proof}

\begin{lem}\label{lem:transverse-pairs}
	The group $G = \Aut(V,b)$  acts transitively on the set $\cT$. Moreover, the stabilizer subgroup of the reference pair $(V_+,V_-)$ admits a natural isomorphism to $\GL(V_+)$, which is isomorphic to $\GL(n,\mathbb{F})$.
\end{lem}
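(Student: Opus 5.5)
The argument has two parts: first show that any transverse pair of maximal isotropic subspaces can be moved to the reference pair $(V_+,V_-)$ by an element of $G$, then compute the stabilizer of $(V_+,V_-)$. Both rest on one duality. If $(A,C)\in\cT$, nondegeneracy of $b$ together with $b|_A=b|_C=0$ and $A\oplus C=V$ makes the pairing
\[
\phi_{A,C}\colon C\to A^{*},\qquad c\mapsto b(\cdot,c)|_A
\]
injective. Indeed, if $b(A,c)=0$, then also $b(C,c)=0$, so $b(V,c)=0$ and hence $c=0$. Since $\dim A=\dim C=n$, the map $\phi_{A,C}$ is an isomorphism. For sesquilinear or quaternionic $b$, $A^{*}$ should be read as the appropriately twisted (conjugate) dual, and the map is then only semilinear; nothing below depends on this.

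\textbf{Transitivity.} Fix $(A,C)\in\cT$ and choose any $\F$-linear isomorphism $g_1\colon V_+\to A$. There is then a unique linear map $g_2\colon V_-\to C$ satisfying
\[
b(g_1u,g_2w)=b(u,w)\qquad\text{for all } u\in V_+,\ w\in V_- .
\]
Concretely, $g_2=\phi_{A,C}^{-1}\circ (g_1^{-1})^{*}\circ\phi_{V_+,V_-}$, with the dual map twisted in the sesquilinear cases. One checks that the two semilinearities cancel, so $g_2$ is genuinely $\F$-linear. Set $g=g_1\oplus g_2\colon V_+\oplus V_-\to A\oplus C=V$. On $V_+\times V_+$ and on $V_-\times V_-$ both $b$ and $b(g\cdot,g\cdot)$ vanish, because all four subspaces are isotropic. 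The mixed terms agree by construction. The remaining ordering $b(w,u)$ with $w\in V_-$, $u\in V_+$ follows from the $\pm$-symmetry, or $\pm$-Hermitian symmetry, of $b$. Therefore $g\in G$ and $g\cdot(V_+,V_-)=(A,C)$. This step is where care is needed: the uniform treatment of the seven form types, especially the semilinearity of $\phi$ for Hermitian and skew-Hermitian forms over $\C$ and $\HH$ and the right-module conventions over $\HH$. I would handle it by writing $b(g_1u,g_2w)=b(u,w)$ as the defining condition and never passing through matrices.

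\textbf{Stabilizer.} An element $g$ stabilizing $(V_+,V_-)$ preserves both summands, so $g=g_+\oplus g_-$. The isometry condition reduces to $b(g_+u,g_-w)=b(u,w)$, which forces
\[
g_-=\phi_{V_+,V_-}^{-1}\circ(g_+^{-1})^{*}\circ\phi_{V_+,V_-},
\]
so $g_-$ is uniquely determined by $g_+$. Conversely, by the transitivity construction with $A=V_+$ and $C=V_-$, every $g_+\in\GL(V_+)$ arises. Hence $g\mapsto g|_{V_+}$ is an isomorphism from the stabilizer onto $\GL(V_+)\cong\GL(n,\F)$. It is a Lie group isomorphism because both it and its inverse are given by smooth algebraic formulas.
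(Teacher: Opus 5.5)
Your proof is correct and follows essentially the same route as the paper. The paper picks a basis $a_1,\dots,a_n$ of $A$ and the $b$-dual basis $c_1,\dots,c_n$ of $C$, so that the Gram matrix matches that of the split basis of $V_+\oplus V_-$; this is the coordinate version of your choice of $g_1$ and the induced $g_2$, and the paper likewise computes the stabilizer as block-diagonal with the $V_-$ block equal to the inverse adjoint of the $V_+$ block. Your basis-free version is, if anything, more careful than the paper's about semilinearity and the quaternionic right-module conventions.
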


\begin{proof}
	Take an arbitrary pair $(A,C) \in \cT$. Select a basis $\{a_1,\dots,a_n\}$ for the subspace $A$. Since $A$ and $C$ are transverse maximal isotropic subspaces, the pairing
	\[
	\begin{aligned}
		A \times C &\to \mathbb{F}, \\
		(a,c) &\mapsto b(a,c)
	\end{aligned}
	\]
	is nondegenerate. 
	Choose a dual basis $\{c_1,\dots,c_n\}$ of $C$ satisfying the canonical orthogonality relations between the two bases. As both $A$ and $C$ are isotropic subspaces, the ordered basis
	\[
	a_1,\dots,a_n,\,c_1,\dots,c_n
	\]
	has the same Gram matrix as the canonical split basis associated with the decomposition $V = V_+ \oplus V_-$.
	Define the linear transformation sending the canonical split basis to the constructed basis above; this map preserves $b$ and thus belongs to $G$. By construction, this element maps the reference pair $(V_+,V_-)$ onto $(A,C)$, so the $G$-action on $\cT$ is transitive.
	
	For the stabilizer, Any automorphism in $G$ that fixes the ordered pair $(V_+,V_-)$ must take the block-diagonal form
	\[
	\begin{pmatrix}
		A & 0 \\
		0 & D
	\end{pmatrix}.
	\]
	The invariance requirement for $b$ enforces that $D$ equals the inverse adjoint of $A$ with respect to the canonical pairing between $V_+$ and $V_-$. Hence the stabilizer is parametrized by $A\in\GL(V_+)$, yielding a group isomorphism $\operatorname{Stab}_G(V_+,V_-) \cong \GL(n,\mathbb{F})$.
\end{proof}

\begin{thm}[Isotropic-pair Grassmannians]\label{thm:Isotropic-pair Grassmannians}
	The manifolds $\cT$ are pseudo-Riemannian symmetric spaces.
	We call them \textbf{isotropic-pair Grassmannians}, as they parameterize  ordered pairs of transverse maximal $b$-isotropic subspaces in $V$. Explicitly, this construction produces the following seven series:
	\[
	\frac{\OO(n,n)}{\GL(n,\mathbb{R})},\;
	\frac{\UU(n,n)}{\GL(n,\mathbb{C})},\;
	\frac{\SpG(n,n)}{\GL(n,\mathbb{H})},\;
	\frac{\SpG(2n,\mathbb{R})}{\GL(n,\mathbb{R})},\;
	\frac{\SpG(2n,\mathbb{C})}{\GL(n,\mathbb{C})},\;
	\frac{\OO(2n,\mathbb{C})}{\GL(n,\mathbb{C})},\;
	\frac{\Ostar(4n)}{\GL(n,\mathbb{H})}.
	\]
\end{thm}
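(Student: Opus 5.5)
By Lemma~\ref{lem:transverse-pairs}, $G=\Aut(V,b)$ acts transitively on $\cT$, and the stabilizer of the reference pair $(V_+,V_-)$ is $\GL(V_+)\cong\GL(n,\F)$. Orbit–stabilizer therefore gives a $G$-equivariant bijection
\[
G/\GL(n,\F)\longrightarrow\cT,\qquad g\mapsto (gV_+,gV_-).
\]
This map is smooth. It is a diffeomorphism because $\cT$ is an open subset of $\IG(V,b)\times\IG(V,b)$, since transversality is an open condition, and $G$ acts smoothly on it. Combined with Proposition~\ref{prop:Xb-Tb}, this also identifies the graph-like fixed locus $\cX$ with $G/\GL(n,\F)$.

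The symmetric-pair structure comes from the anti-isometry $s$. Conjugation
\[
\rho(g)=sgs^{-1}
\]
preserves $G$: if $g$ preserves $b$, then
\[
b(sgs^{-1}u,sgs^{-1}v)=(-1)^2\,b(u,v)=b(u,v).
\]
Since $s^2=I$, $\rho$ is an involutive automorphism of $G$. Its fixed group $G^\rho$ consists of the $g$ commuting with $s$, i.e.\ those preserving both eigenspaces $V_\pm$. This is exactly the stabilizer of the ordered pair $(V_+,V_-)$, which is $\GL(n,\F)$ by the lemma. Hence $(G,\GL(n,\F))$ is a symmetric pair, in the same way as Proposition~\ref{ss}. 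The $G$-invariant pseudo-Riemannian metric on $G/G^\rho$ is induced by the Killing form (or trace form) restricted to the $(-1)$-eigenspace $\mathfrak p$ of $d\rho$. This restriction is nondegenerate because $\mathfrak g=\mathfrak h\oplus\mathfrak p$ is an orthogonal decomposition for an $\mathrm{Ad}$-invariant nondegenerate form.

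It remains to list the seven split pairs $(V\cong\F^{2n},b)$ from Table~\ref{tab:IG} and record the corresponding $G$:
\begin{enumerate}[label=\textup{(\arabic*)}]
\item $(n,n)$-symmetric over $\R$ gives $\OO(n,n)$;
\item skew-symmetric over $\R$ gives $\SpG(2n,\R)$;
\item symmetric over $\C$ gives $\OO(2n,\C)$;
\item skew-symmetric over $\C$ gives $\SpG(2n,\C)$;
\item $(n,n)$-Hermitian over $\C$ gives $\UU(n,n)$;
\item $(n,n)$-Hermitian over $\HH$ gives $\SpG(n,n)$;
\item skew-Hermitian over $\HH^{2n}$ gives $\Ostar(4n)$.
\end{enumerate}
In every case the isotropy group is $\GL(n,\F)$.

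The step I expect to need the most care is the stabilizer identification in the sesquilinear cases, together with checking that $\rho$ is well defined there. For Hermitian and skew-Hermitian $b$, the induced pairing $V_+\times V_-\to\F$ is sesquilinear. The block $D$ in the stabilizer is then the inverse of the $b$-adjoint of $A$, not the inverse transpose, but it is still uniquely determined by $A$, so the stabilizer remains $\GL(n,\F)$. One must also confirm that $s$ is $\F$-linear. Over $\HH$ this holds because $V_\pm$ are right $\HH$-subspaces and $s$ acts by $\pm1$ on them. Consequently $\rho$ maps $G$ to itself, and no field-specific argument beyond this is needed.
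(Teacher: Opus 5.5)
Your proposal is correct and follows the paper's proof: transitivity with $\GL(n,\F)$ stabilizer from Lemma~\ref{lem:transverse-pairs}, the symmetric-pair structure from conjugation by the map $s$ (equal to $\pm1$ on $V_\pm$), and the seven cases read off from Table~\ref{tab:IG}. Your extra checks fill in details the paper leaves implicit: that conjugation by the anti-isometry $s$ preserves $G$, that $G^\rho$ equals the stabilizer exactly rather than only up to components, and that the orbit map is a diffeomorphism.
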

\begin{proof}
	By Lemma \ref{lem:transverse-pairs}, $G$ acts transitively on $\cT$ with stabilizer $\GL(n,\F)$.  Hence
	\[
	\cT\cong G/\GL(n,\F).
	\]
	The seven displayed spaces are exactly the possibilities for the classical split forms in Table \ref{tab:IG}.  They are pseudo-Riemannian symmetric spaces because the stabilizer $\GL(n,\F)$ is the fixed-point subgroup, up to connected components, of an involution of $G$. In a split basis, this involution is
	\[
	g\longmapsto SgS^{-1},
	\qquad S|_{V_+}=1,\quad S|_{V_-}=-1,
	\]
	with the evident interpretation in each classical group.
\end{proof}

\subsection{Type IV.2: Nondegenerate-subspace Grassmannians}\label{sec:b-nondegenerate}
Let $(V \cong \mathbb{F}^n, b)$ be a classical space with $b \neq 0$. For any subspace $V_1 \subset V$ on which the restriction $b|_{V_1}$ is nondegenerate, let $V_2 := V_1^{\perp_b}$ denote its $b$-orthogonal complement. We then have $\dim V_1 + \dim V_2 = n$ and $V_1 \cap V_2 = \{0\}$, which yields the direct sum decomposition $V = V_1 \oplus V_2$.
Define
\[
s:V\to V,
\qquad s(v_1+v_2)=v_1-v_2,\qquad v_{i}\in V_{i}.
\]
Since $V_1 \perp_b V_2$, the map $s$ is a $b$-isometry:
\begin{equation}\label{eq:s-isometry}
	b(su,sv)=b(u,v),\qquad u,v\in V.
\end{equation}
Put $E=V\oplus V$ and $B=b\oplus(-b)$.  The graph
\[
L=\Graph(s)=\{(v,sv):v\in V\}\subset E
\]
is a maximal isotropic  subspace of $(E,B)$.  Its natural opposite graph is
\[
L^- =\Graph(-s)=\{(v,-sv):v\in V\} \subset E.
\]
Then $E=L\oplus L^-$.  We define the reflection along $L$ to be the involution
\[
R_L:E\to E,
\qquad R_L(x,y)=(sy,sx).
\]
Equivalently, $R_L$ is the identity on $L$ and minus the identity on $L^-$.

Denote $G=Aut(V,b)$, $\olG=\IG(E,B)$ as before, then
\begin{lem}\label{lem:reflection-orthogonal 2}
	The map $R_L$ induces an involution on $\olG$.
\end{lem}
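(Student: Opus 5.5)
The plan is to show that $R_L$ is a linear isometry of $(E,B)$, exactly as in Lemma \ref{lem:reflection-orthogonal}, but now using the isometry property \eqref{eq:s-isometry} in place of the anti-isometry property. Concretely, for $(x,y),(x',y')\in E$ I will compute
\[
B(R_L(x,y),R_L(x',y'))=b(sy,sy')-b(sx,sx')=b(y,y')-b(x,x')=-B((x,y),(x',y')).
\]
So $R_L$ is a $B$-anti-isometry rather than an isometry. This makes no difference for the argument: a linear map $T$ with $B(Tz,Tw)=-B(z,w)$ still sends $B$-isotropic subspaces to $B$-isotropic subspaces.

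Next I check that $R_L$ preserves dimension and is an involution on subspaces. It preserves dimension because it is invertible, since $s^2=I$. Moreover
\[
R_L^2(x,y)=R_L(sy,sx)=(s^2x,s^2y)=(x,y),
\]
so $R_L^2=I_E$. Hence $R_L$ maps maximal $B$-isotropic subspaces bijectively onto maximal $B$-isotropic subspaces, and it squares to the identity on $\olG=\IG(E,B)$.

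It remains to check smoothness. The induced map on $\olG$ is the restriction of the smooth action of the linear map $R_L\in\GL(E)$ on the ambient Grassmannian $\Gr_{\F}(n,E)$ to the invariant closed submanifold $\IG(E,B)$. It is therefore a smooth involution.

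A small point to address is linearity over $\F$. The map $s$ is $\F$-linear, since it is the projection-difference for an $\F$-linear decomposition $V=V_1\oplus V_2$. Therefore $R_L$ is $\F$-linear, and images of $\F$-subspaces are again $\F$-subspaces; this covers the quaternionic and sesquilinear cases uniformly. I expect no real obstacle beyond noticing the sign: unlike the previous subsection, $R_L$ reverses $B$, which is still sufficient for preserving isotropy.
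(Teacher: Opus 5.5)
Your proposal is correct and follows the paper's proof. The paper likewise uses \eqref{eq:s-isometry} to get $B(R_L z,R_L w)=-B(z,w)$ and concludes that $R_L$ preserves maximal $B$-isotropic subspaces; your additional checks ($R_L^2=I_E$, $\F$-linearity of $s$, smoothness via restriction from $\Gr_{\F}(n,E)$) are valid details that the paper leaves implicit.
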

\begin{proof}
	Using \eqref{eq:s-isometry}, for $(x,y),(x',y')\in E$ we have
	\[
	\begin{aligned}
		B(R_L(x,y),R_L(x',y'))
		&=B((sy,sx),(sy',sx')) \\
		&=b(sy,sy')-b(sx,sx') \\
		&=b(y,y')-b(x,x') \\
		&=-B((x,y),(x',y')).
	\end{aligned}
	\]
	It follows that $R_L$ sends maximal $B$-isotropic subspaces to maximal $B$-isotropic subspaces. 
\end{proof}

\begin{prop}\label{prop:full-fixed-locus 2}
	The fixed-point set of $R_L$ on $\olG=\IG(E,B)$ is
	\[
	\IG(E,B)^{R_L}\cong  \bigsqcup_{k=0}^{n} \Gr_{\F}(k,V).
	\]
\end{prop}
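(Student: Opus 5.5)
We argue as in Proposition \ref{prop:full-fixed-locus}, except that now $R_L$ is an anti-isometry of $B$ (Lemma \ref{lem:reflection-orthogonal 2}) rather than an isometry, and the eigenspaces $L$, $L^-$ are maximal isotropic rather than nondegenerate. Let $W\in\IG(E,B)$ satisfy $R_L(W)=W$. Since $R_L$ is diagonalizable with $+1$-eigenspace $L$ and $-1$-eigenspace $L^-$, we get
\[
W=(W\cap L)\oplus(W\cap L^-)=:A\oplus C .
\]

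First we describe the pairing between $L$ and $L^-$. Both are isotropic, since $L=\Graph(s)$ with $s$ a $b$-isometry, and likewise for $-s$. Moreover
\[
B\bigl((u,su),(v,-sv)\bigr)=b(u,v)+b(su,sv)=2b(u,v).
\]
Identifying $L\cong V$ and $L^-\cong V$ via $v\mapsto(v,\pm sv)$, the form $B$ therefore pairs $L$ and $L^-$ perfectly, through $2b$.

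Next we translate isotropy. For subspaces $A_0,C_0\subset V$ corresponding to $A,C$, the space $W$ is $B$-isotropic if and only if $C_0\subset A_0^{\perp_b}$. Since $\dim W=n$ (half of $\dim E$), we need $\dim A_0+\dim C_0=n$. Together with $\dim A_0^{\perp_b}=n-\dim A_0$, this forces $C_0=A_0^{\perp_b}$. Conversely, for any $k$-dimensional $A_0\subset V$, the space $A\oplus C$ built from $A_0$ and $A_0^{\perp_b}$ is $n$-dimensional, $B$-isotropic and $R_L$-invariant, hence lies in $\IG(E,B)^{R_L}$.

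This yields a bijection
\[
\IG(E,B)^{R_L}\longrightarrow\bigsqcup_{k=0}^n\Gr_\F(k,V),\qquad W\longmapsto A_0,
\]
with $k=\dim A_0$, and inverse $A_0\mapsto A_0\oplus A_0^{\perp_b}$ (embedded in $L\oplus L^-$). Both maps are smooth: orthogonal complementation is smooth on Grassmannians, and $W\mapsto W\cap L$ is smooth on the fixed locus because its dimension is locally constant there. The dimension $k$ is indeed locally constant on the fixed locus, since it is the multiplicity of the eigenvalue $+1$ of $R_L|_W$. This also shows that the components are exactly indexed by $k$.

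The main point to check carefully is the sesquilinear and quaternionic cases. There one must verify that the pairing computation and the identity $\dim A_0^{\perp_b}=n-\dim A_0$ hold for Hermitian and skew-Hermitian $b$, and that $W\cap L$ is an $\F$-subspace. Both follow because $R_L$ is $\F$-linear (respectively commutes with the right $\HH$-action) and $b$ is nondegenerate.
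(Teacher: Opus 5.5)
Your proposal is correct and follows essentially the same route as the paper. Both arguments use the eigenspace splitting $W=(W\cap L)\oplus(W\cap L^-)$, the isotropy of $L$ and $L^-$ together with the pairing $B\bigl((a,sa),(c,-sc)\bigr)=2b(a,c)$, and the dimension count that forces $C_0=A_0^{\perp_b}$. Your additional remarks on smoothness, and on $k=\dim(W\cap L)$ being locally constant so that it indexes the components, agree with the paper and are somewhat more careful than what it writes.
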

\begin{proof}
	Let $W\in \IG(E,B)$ be fixed by $R_L$.  Since $R_L$ is diagonalizable with $+1$-eigenspace $L$ and $-1$-eigenspace $L^-$, the $R_L$-invariant subspace $W$ decomposes as
	\[
	W=(W\cap L)\oplus(W\cap L^-).
	\]
	Write $A=W\cap L$ and $C=W\cap L^-$.  
	Since $L$ and $L^-$ are themselves $B$-isotropic, and 
	\[
	B\big((a, sa), (c, -sc)\big)
	= b(a,c) - b(sa, -sc)
	= b(a,c) + b(a,c)
	= 2b(a,c),
	\]
	the condition that $W = A \oplus C$ be $B$-isotropic is exactly
	\[
	b(A_0, C_0) = 0, 
	\]
	where $A_0, C_0 \subset V$ are the corresponding subspaces of $A, C$ under the identification $L\cong V$, $L^-\cong V$.
	If $\dim_{\F} A_0 = k$, then $\dim_{\F} C_0 = n - k$. Since $b$ is nondegenerate,
	\[
	\dim_{\F} A_0^{\perp_b} = n - k.
	\]
	Thus $C_0 \subset A_0^{\perp_b}$
	and equal dimensions imply
	\[
	C_0 = A_0^{\perp_b}.
	\]
	
	Therefore the fixed locus inside $\IG(E,B)$ is parametrized by arbitrary subspaces $A \subset V$:
	\[
	W_A = \big\{(a + c,\, sa - sc) : a \in A,\; c \in  A^{\perp_b}\big\}.
	\]
	
	Hence
	\[
	\IG(E,B)^{R_L} \cong \bigsqcup_{k=0}^{n} \Gr_{\F}(k,V).
	\]
	
\end{proof}

From the above proposition, $\IG(E,B)^{R_L}$ has $n+1$ components, each of which is given by 
\[
\IG(E,B)^{R_L}_k=\{W \in \IG(E,B)^{R_L}: \dim_{\F}(W \cap L)=k\} \cong \Gr_{\F}(k,V).
\]

Define the $k$-th graph-like  locus
\begin{equation}\label{eq:Xk-def}
\cX_k:=\olG^{R_L}_k \cap \olGo
\end{equation}
Also define
\begin{equation}\label{eq:Tk-def}
	\cT_k=\bigl\{A \in \Gr_{\F}(k,V): b|_A \text{ is  nondegenerate}\bigr\},
\end{equation}
i.e.  the space of $k$-dimensional subspaces in $V$ on which  $b$ is nondegenerate.

\begin{prop}\label{prop:Xk-Tk}
	There is a natural bijective smooth identification
	\[
	\cX_k\cong \cT_k.
	\]
\end{prop}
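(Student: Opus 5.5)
The plan follows the proof of Proposition \ref{prop:Xb-Tb}, using the explicit description of the fixed locus from Proposition \ref{prop:full-fixed-locus 2}. Every $W\in\olG^{R_L}_k$ has the form
\[
W_A=\{(a+c,\, sa-sc): a\in A,\ c\in A^{\perp_b}\}
\]
for a unique $A\in\Gr_{\F}(k,V)$. The map $A\mapsto W_A$ is a diffeomorphism $\Gr_{\F}(k,V)\to\olG^{R_L}_k$. It suffices to determine exactly which $A$ make $W_A$ graph-like, and to show that this condition is the nondegeneracy of $b|_A$.

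\emph{Computing the two intersections.} Write $V=V_1\oplus V_2$ for the decomposition defining $s$. A vector of $W_A$ lies in $0\oplus V$ iff $a+c=0$, i.e. $a=-c\in A\cap A^{\perp_b}$. The second coordinate is then $sa-sc=2sa$, and $s$ is invertible, so
\[
W_A\cap(0\oplus V)=0 \iff A\cap A^{\perp_b}=0 .
\]
Similarly, a vector of $W_A$ lies in $V\oplus 0$ iff $s(a-c)=0$, i.e. $a=c\in A\cap A^{\perp_b}$, and its first coordinate is then $2a$. Hence
\[
W_A\cap(V\oplus 0)=0 \iff A\cap A^{\perp_b}=0 .
\]
Both transversality conditions therefore reduce to $A\cap A^{\perp_b}=0$. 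This is equivalent to $b|_A$ being nondegenerate, since $A\cap A^{\perp_b}$ is precisely the radical of $b|_A$.

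\emph{Assembling the identification.} We conclude that $\cX_k=\{W_A: A\in\cT_k\}$. The map $A\mapsto W_A$ is the restriction of the diffeomorphism $\Gr_{\F}(k,V)\cong\olG^{R_L}_k$ to the open subset $\cT_k$. Its image is the open set $\cX_k$, so it gives a smooth bijection $\cT_k\to\cX_k$, in fact a diffeomorphism. Its inverse is $W\mapsto$ the subspace of $V$ corresponding to $W\cap L$ under $L\cong V$, which is smooth on $\olG^{R_L}_k$ since $\dim(W\cap L)=k$ is constant there.

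\emph{Main obstacle.} The only delicate point is the smoothness in the first step: that $\olG^{R_L}_k\cong\Gr_{\F}(k,V)$ as manifolds and not merely as sets. We handle it by noting that $A\mapsto A\oplus A^{\perp_b}$ (transported to $L\oplus L^-$) is smooth, because orthogonal complementation is a smooth map $\Gr(k,V)\to\Gr(n-k,V)$, and that $W\mapsto W\cap L$ is smooth on the constant-rank stratum. The intersection computations themselves are routine.
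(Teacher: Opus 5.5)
Your proposal is correct and follows the paper's own argument. You parametrize the $k$-th component of the fixed locus by $W_A=\{(a+c,\,sa-sc)\}$ and show that both transversality conditions reduce to $A\cap A^{\perp_b}=0$, i.e.\ to nondegeneracy of $b|_A$. You also make explicit two things the paper leaves as ``direct computation'': the intersections $W_A\cap(0\oplus V)=\{(0,2sa)\}$ and $W_A\cap(V\oplus 0)=\{(2a,0)\}$ for $a\in A\cap A^{\perp_b}$, and the smoothness of $A\mapsto W_A$.
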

\begin{proof}
	Using the parametrization in  \ref{prop:full-fixed-locus 2}, any $W \in  \cX_k$  has the form
	\[
	W=	W_A = \big\{(a + c,\, sa - sc) : a \in A,\; c \in  A^{\perp_b}\big\},
	\]
	where $A \subset V$ and $\dim_{\F} A = k$.	By direct computation, we have
	\[
	W \cap(V\oplus \{0\})=\{0\}
	\Longleftrightarrow W \cap(\{0\}\oplus V)=\{0\}\Longleftrightarrow
	A\cap A^{\perp_b}=\{0\} \Longleftrightarrow b|_A \text{ is  nondegenerate}.
	\]
	This proves the claim.
\end{proof}

For signature forms, we further have
\[
\cT_k=\bigsqcup_{p+q=k}\cT_{p,q}
\]
with
\[
\cT_{p,q} \cong \{A\in\Gr_{\F}(k,V):b|_A\text{ has signature }(p,q)\},
\]
where $p+q=k$ and the indices are compatible with the signature of $b$ on $V$.

\begin{lem}\label{lem:nondegenerate-normal-form}
	The group $G=\Aut(V,b)$ acts transitively on $\cT_k$ in the non-signature cases and on each $\cT_{p,q}$ in the signature cases.  The stabilizer of a reference subspace $A_0$ is the product of the automorphism groups of $(A_0,b|_{A_0})$ and $(A_0^{\perp_b},b|_{A_0^{\perp_b}})$, i.e. $\operatorname{Stab}_G(A_0)
	\cong
	\Aut(A_0,b|_{A_0})
	\times
	\Aut(A_0^{\perp_b},b|_{A_0^{\perp_b}})$.
\end{lem}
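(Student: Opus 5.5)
The plan is to reduce transitivity to a Witt-type extension argument, using the orthogonal splitting $V=A\oplus A^{\perp_b}$ furnished by nondegeneracy. Take $A,A'\in\cT_k$, or in the signature cases $A,A'\in\cT_{p,q}$. Since $b|_A$ is nondegenerate, $A\cap A^{\perp_b}=0$ and $\dim A^{\perp_b}=n-k$, so $V=A\oplus A^{\perp_b}$ is a $b$-orthogonal decomposition. The same holds for $A'$. The restrictions $b|_A$ and $b|_{A'}$ are nondegenerate forms of the same algebraic type as $b$ on spaces of the same dimension $k$. By the classification of classical forms \cite{Har,K}, they are isometric. In the signature cases this uses the hypothesis that both have signature $(p,q)$; in the non-signature cases it is automatic. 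Fix an isometry $\phi\colon(A,b|_A)\to(A',b|_{A'})$.

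The key step is to produce a matching isometry $\psi\colon(A^{\perp_b},b)\to(A'^{\perp_b},b)$. I would argue through invariants. The form $b|_{A^{\perp_b}}$ is nondegenerate of the same type, in dimension $n-k$. In the non-signature cases (real or complex skew-symmetric, complex symmetric, quaternionic skew-Hermitian) a single isometry class exists in each dimension, so $\psi$ exists. In the signature cases the signature is additive under orthogonal direct sums. Hence $b|_{A^{\perp_b}}$ and $b|_{A'^{\perp_b}}$ both have signature $(r-p,s-q)$, where $(r,s)$ is the signature of $b$, and are therefore isometric. The map $g=\phi\oplus\psi$ preserves $b$ because both decompositions are orthogonal, so $g\in G$ and $gA=A'$. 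This is Witt's extension theorem in concrete form. I expect this to be the only step needing care, specifically checking that the cancellation really holds for each of the seven form types (including $\Ostar$, where the skew-Hermitian quaternionic forms have no invariant beyond dimension). If a uniform statement is preferred, I would simply cite Witt's theorem for the relevant division algebra with involution.

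For the stabilizer, let $g\in G$ with $gA_0=A_0$. Since $g$ preserves $b$, it also preserves $A_0^{\perp_b}$. So $g$ is block-diagonal with respect to $V=A_0\oplus A_0^{\perp_b}$, and each block is an isometry of the restricted form. Conversely, any pair $(g_1,g_2)\in\Aut(A_0,b|_{A_0})\times\Aut(A_0^{\perp_b},b|_{A_0^{\perp_b}})$ gives $g_1\oplus g_2\in G$ stabilizing $A_0$, again by orthogonality. These assignments are mutually inverse group homomorphisms, which yields the claimed isomorphism $\operatorname{Stab}_G(A_0)\cong\Aut(A_0,b|_{A_0})\times\Aut(A_0^{\perp_b},b|_{A_0^{\perp_b}})$.
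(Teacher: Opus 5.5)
Your proposal is correct and follows the paper's proof. Both use the orthogonal splitting $V=A\oplus A^{\perp_b}$, extend an isometry $A\to A'$ to an element of $G$, identify orbits with isometry classes of $b|_A$ (signature classes in the signature cases), and give the same block-diagonal stabilizer argument. The one difference is that the paper cites Witt's extension theorem directly, whereas you build the extension explicitly as $\phi\oplus\psi$ by classifying the forms on the complements (one class per dimension, or additivity of signature); this amounts to verifying Witt cancellation concretely in these cases.
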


\begin{proof}
	For any $A \in 	\cT_k$, we have 
	\[
	V=A\oplus A^{\perp_b}.
	\]
	
	Now let $A,A'\in \cT_k$ and suppose that $(A,b|_A)$ and
	$(A',b|_{A'})$ are isometric. By Witt's extension theorem, every
	isometry
	\[
	\phi:(A,b|_A)\longrightarrow (A',b|_{A'})
	\]
	extends to an element $g\in \Aut(V,b)$. Therefore
	$gA=A'$. Hence $G$-orbits in $\cT_k$ are exactly the isometry classes
	of the restricted forms $b|_A$. Thus $G$ acts transitively on $\cT_k$ when $b$ is not a signature form.
	In the signature form cases, the isometry class of $b|_A$ is determined
	by its signature $(p,q)$. Therefore $G$ acts transitively on each
	$\cT_{p,q}$.

	Let $g\in G$ stabilize $A_0$. Then for $a\in A_0$ and $c\in A_0^{\perp_b}$,
	\[
	b(ga,gc)=b(a,c)=0.
	\]
	Since \(gA_0=A_0\), this implies \(gc\in A_0^{\perp_b}\). Thus \(g\) preserves
	both summands \(A_0\) and \(A_0^{\perp_b}\), and its restrictions define
	elements
	\[
	g|_{A_0}\in \Aut(A_0,b|_{A_0}),
	\qquad
	g|_{A_0^{\perp_b}}\in \Aut(A_0^{\perp_b},b|_{A_0^{\perp_b}}).
	\]
	Conversely, any pair of such automorphisms defines an element of \(G\) by
	acting block-diagonally with respect to
	\[
	V=A_0\oplus A_0^{\perp_b}.
	\]
	Hence
	\[
	\operatorname{Stab}_G(A_0)
	\cong
	\Aut(A_0,b|_{A_0})
	\times
	\Aut(A_0^{\perp_b},b|_{A_0^{\perp_b}}).
	\]
\end{proof}

\begin{thm}[Nondegenerate-subspace Grassmannians]\label{thm:Nondegenerate-subspace Grassmannians}
	The orbits $\cT_k$ and $\cT_{p,q}$ are pseudo-Riemannian symmetric spaces. 
	We call them \textbf{nondegenerate-subspace Grassmannians}, as they parameterize  $k$-dimensional subspaces in $V$ on which  $b$ is nondegenerate. Explicitly, this construction produces the following seven series (here $0 \le p \le r, ~  0 \le q \le s,~  p+q = k, ~ r+s=n$):
	\[
	\frac{\OO(r,s)}{\OO(p,q)\times \OO(r-p,s-q)},\;
	\frac{\UU(r,s)}{\UU(p,q)\times \UU(r-p,s-q)},\;
	\frac{\SpG(r,s)}{\SpG(p,q)\times \SpG(r-p,s-q)},
	\]
	\[
	\frac{\SpG(2m,\mathbb{R})}{\SpG(2k,\mathbb{R})\times \SpG(2m-2k,\mathbb{R})},\;
	\frac{\SpG(2m,\mathbb{C})}{\SpG(2k,\mathbb{C})\times \SpG(2m-2k,\mathbb{C})},
	\]
	\[
	\frac{\OO(n,\mathbb{C})}{\OO(k,\mathbb{C})\times \OO(n-k,\mathbb{C})},\;
	\frac{\Ostar(2n)}{\Ostar(2k)\times \Ostar(2n-2k)}.
	\]
\end{thm}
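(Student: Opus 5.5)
The plan mirrors the proof of Theorem \ref{thm:Isotropic-pair Grassmannians}. First, Lemma \ref{lem:nondegenerate-normal-form} shows that $G=\Aut(V,b)$ acts transitively on $\cT_k$ in the non-signature cases and on each $\cT_{p,q}$ in the signature cases. Fix a reference subspace $A_0$, namely the $V_1$ used to build $s$. The same lemma gives
\[
\cT_k\ \text{or}\ \cT_{p,q}\;\cong\; G/\bigl(\Aut(A_0,b|_{A_0})\times \Aut(A_0^{\perp_b},b|_{A_0^{\perp_b}})\bigr).
\]
Since $\cT_k$ is open in $\Gr_{\F}(k,V)$, each orbit is an open union of components of it. Hence it is a smooth manifold, and the orbit map is a diffeomorphism because the action is smooth and transitive. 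By Proposition \ref{prop:Xk-Tk}, this is also the $k$-th graph-like fixed locus $\cX_k$.

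For the symmetric-pair structure I would take the involution $\rho_s(g)=sgs^{-1}$ of $G$, with $s=\mathrm{id}_{A_0}\oplus(-\mathrm{id}_{A_0^{\perp_b}})$.
\begin{itemize}
\item By \eqref{eq:s-isometry}, $s\in G$, so $\rho_s$ is an inner involutive automorphism of $G$.
\item An element $g$ commutes with $s$ exactly when it preserves both eigenspaces $A_0$ and $A_0^{\perp_b}$.
\item By the stabilizer argument in Lemma \ref{lem:nondegenerate-normal-form}, this holds exactly when $g\in \operatorname{Stab}_G(A_0)$.
\end{itemize}
So $G^{\rho_s}=\operatorname{Stab}_G(A_0)$ on the nose, and $(G,\operatorname{Stab}_G(A_0))$ is a symmetric pair. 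Then $G/G^{\rho_s}$ is pseudo-Riemannian symmetric, with the Killing form (or trace form) restricted to the $(-1)$-eigenspace of $d\rho_s$ as metric.

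Finally I would run through the seven cases of nondegenerate $b$ and identify the two factors of the stabilizer.
\begin{itemize}
\item Real symmetric, complex Hermitian and quaternionic Hermitian forms of signature $(r,s)$: a $k$-subspace of signature $(p,q)$ has complement of signature $(r-p,s-q)$. This gives $\OO$, $\UU$, $\SpG$ of $(p,q)\times(r-p,s-q)$, with $p\le r$ and $q\le s$ forced.
\item Real and complex skew-symmetric forms: nondegeneracy forces $k$ even, say $2k$. This gives $\SpG(2k)\times\SpG(2m-2k)$ over $\R$ or $\C$.
\item Complex symmetric forms: this gives $\OO(k,\C)\times\OO(n-k,\C)$.
\item Quaternionic skew-Hermitian forms: this gives $\Ostar(2k)\times\Ostar(2n-2k)$.
\end{itemize}

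The main obstacle is the orbit classification: the isometry classes of $b|_A$ must be exactly those listed, with every such class actually occurring. This rests on Witt's theorem together with the classification of forms: skew-Hermitian quaternionic and complex symmetric forms have a unique class in each dimension, and signature is a complete invariant otherwise. I would cite \cite{Har,K} for this and note that Witt's extension theorem holds for these sesquilinear forms over $\HH$ as well.
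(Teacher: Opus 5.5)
Your proposal is correct and follows essentially the paper's route: the homogeneous description comes from Lemma~\ref{lem:nondegenerate-normal-form}, and the symmetric-pair structure comes from the inner involution $g\mapsto sgs^{-1}$ with $s=\mathrm{id}$ on $A_0$ and $-\mathrm{id}$ on $A_0^{\perp_b}$; your sharpening to $G^{\rho_s}=\operatorname{Stab}_G(A_0)$ exactly, rather than the paper's ``up to connected components,'' is justified. One small correction: $A_0$ must be chosen inside the particular orbit $\cT_k$ or $\cT_{p,q}$ under consideration, and this need not be the $V_1$ used to build $R_L$, since $\olG^{R_L}$ contains every $\cX_k$; because you define $s$ from $A_0$, this costs nothing.
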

\begin{proof}
	The homogeneous descriptions follow from Lemma \ref{lem:nondegenerate-normal-form}.  Each quotient is symmetric because the stabilizer is the fixed point group, up to connected components, of an involution of $G$. In a standard $(V_1, V_2)$ ( $\dim_{\F} V_1 = k$) basis, this involution is 
	\[
	g\longmapsto SgS^{-1},
	\qquad S|_{V_1}=1,\quad S|_{V_2}=-1,
	\]
	with the evident interpretation in each classical group.
\end{proof}

\subsection{Type IV.3: Complementary-pair Grassmannians}\label{sec:complementary-pair}
A similar construction and structure hold when $b=0$. Below, we briefly present the results for $b=0$ without proof.
Start with $V \cong \F^n$ and decompose $V$ as $V=V_1\oplus V_2$. Define
\[
s:V\to V,
\qquad s(v_1+v_2)=v_1-v_2,\qquad v_{i}\in V_{i}.
\]
Denote
\[
L=\Graph(s)=\{(v,sv):v\in V\}\subset V\oplus V, 
\]
\[
L^- =\Graph(-s)=\{(v,-sv):v\in V\} \subset V\oplus V.
\]
Then $V\oplus V=L\oplus L^-$.  On $E=V\oplus V$, the reflection along $L$ is given by
\[
R_L:E\to E,
\qquad R_L(x,y)=(sy,sx).
\]
Set $G=\GL(n, \F)$ and $\olG=\Gr_{\F}(n,E)$. Then $R_L$ induces an involution on $\olG$.

The fixed-point locus of $R_L$ is
\[
\olG^{R_L}\cong  \bigsqcup_{k=0}^{n} \Gr_{\F}(k,V) \times \Gr_{\F}(n-k,V),
\]

For each $k$, its graph-like part is 
\[
\cX_k:=\olG^{R_L}_k \cap \olGo \cong \{ (A,C) \in \Gr_{\F}(k,V) \times \Gr_{\F}(n-k,V):A \oplus C = V \} =\cT_k.
\]
\begin{thm}[ Complementary-pair Grassmannians]\label{thm: Complementary-pair Grassmannians}
	The manifolds $\cT_k$  are pseudo-Riemannian symmetric spaces. 
	We call them \textbf{complementary-pair Grassmannians}, as they parameterize  ordered pairs of transverse $k$-dimensional and $(n-k)$-dimensional subspaces in $V$. Explicitly, this construction produces the following three series:
	\[
	\frac{\GL(n,\mathbb{R})}{\GL(k,\mathbb{R})\times \GL(n-k,\mathbb{R})},\;
	\frac{\GL(n,\mathbb{C})}{\GL(k,\mathbb{C})\times \GL(n-k,\mathbb{C})},\;
	\frac{\GL(n,\mathbb{H})}{\GL(k,\mathbb{H})\times \GL(n-k,\mathbb{H})}.
	\]
\end{thm}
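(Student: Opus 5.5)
The proof runs parallel to the nondegenerate-subspace case, with the isotropy conditions simply removed. First I would check that $R_L$ descends to an involution of $\olG=\Gr_{\F}(n,E)$. This is immediate: $R_L$ is $\F$-linear and $R_L^2(x,y)=(s^2x,s^2y)=(x,y)$ because $s^2=I$. Its $+1$-eigenspace is $L=\Graph(s)$ and its $-1$-eigenspace is $L^-=\Graph(-s)$, with $E=L\oplus L^-$.

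Next I would compute the fixed locus. An $R_L$-invariant $n$-plane $W$ splits as $W=(W\cap L)\oplus(W\cap L^-)$. Transporting the pieces to $V$ via $v\mapsto(v,sv)$ and $v\mapsto(v,-sv)$ gives subspaces $A,C\subset V$ with $\dim A=k$ and $\dim C=n-k$. There is no isotropy constraint, so every pair $(A,C)$ occurs, and we get
\[
\olG^{R_L}\cong\bigsqcup_{k}\Gr_{\F}(k,V)\times\Gr_{\F}(n-k,V),
\]
where the $k$-th component is $\{\dim(W\cap L)=k\}$. Explicitly,
\[
W=\{(a+c,\,sa-sc):a\in A,\ c\in C\}.
\]

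To identify the graph-like part, note that an element of $W\cap(0\oplus V)$ has $a=-c$, which forces $a\in A\cap C$. Conversely, any $a\in A\cap C$ gives the element $(0,2sa)$. Similarly, an element of $W\cap(V\oplus 0)$ has $sa=sc$, hence $a=c\in A\cap C$. So both transversality conditions are equivalent to $A\cap C=0$. By the dimension count this is the same as $A\oplus C=V$, which gives $\cX_k\cong\cT_k$. Smoothness follows because this is an open subset of a product of Grassmannians.

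For the homogeneous structure, $G=\GL(n,\F)$ acts transitively on $\cT_k$. Given $(A,C)$, I would choose a basis of $A$ followed by a basis of $C$; the resulting basis of $V$ is the image of a standard basis adapted to $(V_1,V_2)$ under some $g\in G$. The stabilizer of $(V_1,V_2)$ consists of the block-diagonal matrices, namely $\GL(k,\F)\times\GL(n-k,\F)$. The symmetric-pair structure comes from the involution
\[
\rho(g)=sgs^{-1},
\]
which is an automorphism since $s^2=I$. Its fixed group $\{g: gs=sg\}$ is exactly the set of $g$ preserving both eigenspaces $V_1$ and $V_2$, i.e.\ the stabilizer itself, not merely up to components. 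Running this over $\F=\R,\C,\HH$ gives the three series.

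The only step needing care is the fixed-locus decomposition for $\F=\HH$. There one has to check that the eigenspace splitting respects right $\HH$-scalars. It does, because $R_L$ is $\HH$-linear and its eigenvalues $\pm1$ are real, hence central; so the splitting works verbatim.
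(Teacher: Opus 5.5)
Your proposal is correct and is the argument the paper intends. The paper states this theorem without proof and defers to the parallel with the nondegenerate-subspace case: the eigenspace splitting of $R_L$, the parametrization $W=\{(a+c,\,sa-sc)\}$, the transversality computation reducing to $A\cap C=0$, the block-diagonal stabilizer, and the involution $g\mapsto sgs^{-1}$, which you carry out step by step, including the correct remark that here the fixed group equals the stabilizer exactly rather than up to components.
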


\bigskip
\section{Concluding remarks}\label{sec:rmk}
In this paper, we first realize classical groups as graph-like loci of their corresponding Grassmannian compactifications. Furthermore, we establish geometric realizations of other classical pseudo-Riemannian symmetric spaces as moduli spaces parameterizing classical forms, compatible scalar structures, and subspace configurations. Partial cases of these geometric descriptions have been documented in existing literature; for instance, the symplectic Grassmannian was investigated in \cite{LL}.

The fixed-locus construction separates into two cases:

(1) The quotient space \(G/H\) coincides with the graph-like locus of \(\olG^R\) associated with an involution \(R\) on  \(\olG\). In this case, the variety \(\olG^R\) provides a natural Grassmannian compactification of \(G/H\).

(2) The quotient space \(G/H\) arises as a signature stratum of the graph-like locus of \(\olG^R\) for some involution \(R\)  on \(\olG\). All symmetric spaces of this type are enumerated as follows:
\[\frac{\GL(n,\mathbb{R})}{\OO(p,q)},\;
\frac{\GL(n,\mathbb{C})}{\UU(p,q)},\;
\frac{\GL(n,\mathbb{H})}{\SpG(p,q)},\]
\[\frac{\SpG(2n, \mathbb{R})}{\UU(p,q)},\;
\frac{\SpG(2n, \mathbb{C})}{\SpG(p,q)},\;
\frac{\OO(n,\mathbb{C})}{\OO(p,q)},\;
\frac{\Ostar(2n)}{\UU(p,q)},\]
\[\frac{\OO(r,s)}{\OO(p,q) \times \OO(r-p,s-q)},\;
\frac{\UU(r,s)}{\UU(p,q) \times \UU(r-p,s-q)},\;
\frac{\SpG(r,s)}{\SpG(p,q) \times \SpG(r-p,s-q)}.\]
In the special case where \(q=0\), the space \(X_{p,q}\) reduces to a classical non-compact Riemannian symmetric space, which admits an interpretation as a space-like Grassmannian, as established in \cite{CL}. Notably, this family exhausts all classical non-compact Riemannian symmetric spaces \cite{Helgason}.

In addition, we explicitly compute the reflective submanifolds \(\olG^R\) of the Grassmannian   \(\olG\) via the aforementioned constructive arguments. Combined with the results in \cite{L1,L2}, our constructions recover many classical reflective submanifolds of Grassmannians. This work motivates a natural open problem: to obtain a complete geometric description of reflective submanifolds in classical Grassmannians.

\bigskip

School of Mathematics, East China University of Science and Technology, Shanghai 200237, China

E-mail address: yxchen76@ecust.edu.cn

\bigskip

The Institute of Mathematical Sciences and Department of Mathematics, The
Chinese University of Hong Kong, Shatin, N.T., Hong Kong

E-mail address: leung@ims.cuhk.edu.hk
\end{document}